\definecolor{Gray}{gray}{0.80}
\definecolor{LightGray}{gray}{0.90}
\newcommand{\cA}{\mathcal{A}}
\newcommand{\cC}{\mathcal{C}}
\newcommand{\cD}{\mathcal{D}}
\newcommand{\cG}{\mathcal{G}}
\newcommand{\cL}{\mathcal{L}}
\newcommand{\cP}{\mathcal{P}}
\newcommand{\bE}{\mathbb{E}}
\newcommand{\bN}{\mathbb{N}}
\newcommand{\bR}{\mathbb{R}}
\newcommand{\PR}{\mathbb{P}}
\newcommand{\dd}{ \mathrm{d}}
\renewcommand{\epsilon}{\varepsilon}
\newcommand{\vn}[1]{\left| \! \left| #1\right| \! \right|}
\newcommand{\ip}[2]{\langle #1,#2\rangle}
\numberwithin{equation}{section}
\newtheorem{theorem}{Theorem}[section]
\newtheorem{lemma}[theorem]{Lemma}
\newtheorem{proposition}[theorem]{Proposition}
\theoremstyle{definition}
\newtheorem{definition}[theorem]{Definition}
\newtheorem{remark}[theorem]{Remark}
\newtheorem{assumption}[theorem]{Assumption}
\newcommand{\Ftodo}[1]{\todo[size=\scriptsize,color=green!30]{#1}}
\begin{document}

\title{Dynamical moderate deviations for the Curie-Weiss model}

\author{
\renewcommand{\thefootnote}{\arabic{footnote}}
Francesca Collet\footnotemark[1] \, and Richard C. Kraaij\footnotemark[2]
}

\footnotetext[1]{
	Delft Institute of Applied Mathematics,	Delft University of Technology,
	Mekelweg 4, 2628 CD Delft, The Netherlands,	E-mail: \texttt{f.collet-1@tudelft.nl}.
}

\footnotetext[2]{
Fakultät für Mathematik, Ruhr-University of Bochum, Postfach 102148, 
44721 Bochum, Germany, E-mail: \texttt{richard.kraaij@rub.de}.
}

\maketitle

\begin{abstract}
We derive moderate deviation principles for the trajectory of the empirical magnetization of the standard Curie-Weiss model via a general analytic approach based on convergence of generators and uniqueness of viscosity solutions for associated Hamilton-Jacobi equations. The moderate asymptotics depend crucially on the phase under consideration. \\

\noindent \emph{Keywords:} moderate deviations; interacting particle systems; mean-field interaction; viscosity solutions; Hamilton-Jacobi equation \\
\noindent \emph{MSC[2010]:} 60F10; 60J99
\end{abstract}

\section{Introduction}

The study of the normalized sum of random variables and its asymptotic behavior plays a central role in probability and statistical mechanics. Whenever the variables are independent and have finite variance, the central limit theorem ensures that the sum with square-root normalization converges to a Gaussian distribution. The generalization of this result to dependent variables is particularly interesting in statistical mechanics where the random variables are correlated through an interaction Hamiltonian. 
Ellis and Newman characterized the distribution of the normalized sum of spins (\emph{empirical magnetization}) for a wide class of mean-field Hamiltonian of Curie-Weiss type \cite{ElNe78a,ElNe78b,ElNeRo80}. They found conditions, in terms of thermodynamic properties, that lead in the infinite volume limit to a Gaussian behavior and those which lead to a higher order exponential probability distribution. 

A natural further step was the investigation of large and moderate fluctuations for the magnetization. The large deviation principle is due to Ellis \cite{Ell85}. Moderate deviation properties have been treated by Eichelsbacher and L{\"o}we in \cite{EiLo04}. 
A moderate deviation principle is technically a large deviation principle and consists in a refinement of a (standard or non-standard) central limit theorem, in the sense that it characterizes the exponential decay of deviations from the average on a smaller scale. In \cite{EiLo04}, it was shown that the physical phase transition in Curie-Weiss type models is reflected by a radical change in the asymptotic behavior of moderate deviations. Indeed, whereas the rate function is quadratic at non-critical temperatures, it becomes non-quadratic at criticality. 

All the results mentioned so far have been derived at equilibrium; on the contrary, we are interested in describing the time evolution of fluctuations, obtaining non-equilibrium properties. Fluctuations for the standard Curie-Weiss model were studied on the level of a path-space large deviation principle by Comets \cite{Co89} and Kraaij \cite{Kr16b} and on the level of a path-space central limit theorem by Collet and Dai Pra in \cite{CoDaP12}. The purpose of the present paper is to study dynamical moderate deviations to complete the analysis of fluctuations of the empirical magnetization.  We apply the generator convergence approach to  large deviations by Feng-Kurtz \cite{FK06} to characterize the most likely behaviour for the trajectories of fluctuations around the stationary solution(s) in the various regimes. The moderate asymptotics depend crucially on the phase we are considering. The criticality of the inverse temperature $\beta=1$ shows up at this level via a sudden change in the speed and rate function of the moderate deviation principle for the magnetization. In particular, our findings indicate that fluctuations are Gaussian-like in the sub- and super-critical regimes, while they are not at the critical point. \\
Besides, we analyze the deviation behaviour when the temperature is size-dependent and is increasing to the critical point. In this case, the rate function inherits features of both the uniqueness and multiple phases: it is the combination of the critical and non-critical rate functions. To conclude, it is worth to mention that our statements are in agreement with the results found in \cite{EiLo04}.\\

The outline of the paper is as follows: in Section~\ref{sct:results} we formally introduce the Curie-Weiss model and we state our main results. All the proofs, if not immediate, are postponed to Section~\ref{sct:proofs}. Appendix~\ref{sct:app:LDPviaHJequation} is devoted to the derivation of a large deviation principle via solution of Hamilton-Jacobi equation and it is included to make the paper as much self-contained as possible.

\section{Model and main results}\label{sct:results}

\subsection{Notation and definitions}
	
	Before we give our main results, we introduce some notation. We start with the definition of good rate-functions and what it means for random variables to satisfy a large deviation principle.
	
\begin{definition}
	Let $X_1,X_2,\dots$ be random variables on a Polish space $F$. Furthermore let $I : F \rightarrow [0,\infty]$.
	\begin{enumerate}
		\item We say that $I$ is a \textit{good rate-function} if for every $c \geq 0$, the set $\{x \, | \, I(x) \leq c\}$ is compact.
		\item We say that the sequence $\{X_n\}_{n\geq 1}$ is \textit{exponentially tight} if for every $a \geq 0$ there is a compact set $K_a \subseteq X$ such that $\limsup_n \PR[X \in K^c_a] \leq - a$.
		\item We say that the sequence $\{X_n\}_{n\geq 1}$ satisfies the \textit{large deviation principle} with rate $r(n)$ and good rate-function $I$, denoted by 
		\begin{equation*}
		\PR[X_n \approx a] \sim e^{-r(n) I(a)},
		\end{equation*}
		if we have for every closed set $A \subseteq X$
		\begin{equation*}
		\limsup_{n \rightarrow \infty} \frac{1}{r(n)} \log \PR[X_n \in A] \leq - \inf_{x \in A} I(x),
		\end{equation*}
		and for every open set $U \subseteq X$ 
		\begin{equation*}
		\liminf_{n \rightarrow \infty} \frac{1}{r(n)} \log \PR[X_n \in U] \geq - \inf_{x \in U} I(x).
		\end{equation*}
	\end{enumerate}
\end{definition}
	
Throughout the whole paper $\cA\cC$ will denote the set of absolutely continuous curves in $\bR$. 

\begin{definition} 
A curve $\gamma: [0,T] \to \mathbb{R}$ is absolutely continuous if there exists a function $g \in L^1[0,T]$ such that for $t \in [0,T]$ we have $\gamma(t) = \gamma(0) + \int_0^t g(s) \dd s$. We write $g = \dot{\gamma}$.

A curve $\gamma: \bR^+ \to \mathbb{R}$ is absolutely continuous if the restriction to $[0,T]$ is absolutely continuous for every $T \geq 0$.
\end{definition}

\subsection{Glauber dynamics for the Curie-Weiss model}

Let $\sigma = \left( \sigma_i \right)_{i=1}^n \in \{-1,+1\}^n$ be a configuration of $n$ spins and denote by 
\begin{equation*}
m_n(\sigma) = n^{-1} \sum_{i=1}^n \sigma_i
\end{equation*}
the empirical magnetization. The stochastic process $\{\sigma(t)\}_{t \geq 0}$ is described as follows. For $\sigma \in \{-1,+1\}^n$, let us define $\sigma^j$ the configuration obtained from $\sigma$ by flipping the $j$-th spin. The spins will be assumed to evolve with Glauber spin-flip dynamics: at any time $t$, the system may experience a transition $\sigma \to \sigma^j$ at rate $\exp \{ - \beta \sigma_j(t) m_n(t)\}$, where $\beta > 0$ represents the inverse temperature and where by abuse of notation $m_n(t) := m_n(\sigma(t))$. More formally, we can say that $\{ \sigma(t) \}_{t \geq 0}$ is a Markov process on $\{-1,+1\}^n$, with infinitesimal generator

\begin{equation}\label{CW:micro:gen}
\mathcal{G}_n f (\sigma) = \sum_{i=1}^n e^{- \beta \sigma_i m_n(\sigma)} \left[ f \left( \sigma^i \right) - f(\sigma)\right].
\end{equation}
Let
\[
E_n := m_n \left( \{-1,+1\}^n \right) = \left\{ -1, -1+ \frac{2}{n}, \dots, 1 - \frac{2}{n}, 1 \right\} \subseteq [-1,1]
\]
be the set of possible values taken by the magnetization (we will keep using this notation for the state space of the magnetization). The Glauber dynamics \eqref{CW:micro:gen} on the configurations induce Markovian dynamics for the process $\{ m_n(t) \}_{t \geq 0}$ on $E_n$, that in turn evolves with generator
\begin{equation*}
\cA_nf(x) = n \frac{1-x}{2} e^{\beta x} \left[f\left(x + \frac{2}{n}\right) - f(x)\right] + n \frac{1+x}{2} e^{-\beta x} \left[f\left(x - \frac{2}{n}\right) - f(x)\right].
\end{equation*}

This generator can be derived in two ways from \eqref{CW:micro:gen}. First of all, the microscopic jumps induce a change of size $\frac{2}{n}$ on the empirical magnetization. The jump rate of $x$ to $x + \frac{2}{n}$ corresponds to any $-1$ spin switching to $+1$ with rate $e^{\beta x}$. The total number of $-1$ spins can be computed from the empirical magnetization $x$ and equals $n\frac{1-x}{2}$. A similar computation yields the jump rate of $x$ to $x - \frac{2}{n}$. A second way to see that $\cA_n$ is the generator of the empirical magnetization is via the martingale problem and the property that $\cA_n f(m_n(\sigma)) = \cG_n(f \circ m_n)(\sigma)$. 

Assume the initial condition $m_n(0)$ obeys a large deviation principle, then it can be shown that $\{m_n(t)\}_{t \geq 0}$ obeys a large deviation principle on the Skorohod space of c{\`a}dl{\`a}g functions $D_\bR(\bR^+)$. We refer to \cite{EK86} for definition and properties of Skorohod spaces and to \cite{Co89,DuRaWu16} for the proof of the large deviation principle. Moreover, see \cite[Theorem 1]{Kr16b} for a LDP obtained by using similar techniques as in this paper. This path space large deviation principle allows to derive the infinite volume dynamics for our model: if $m_n(0)$ converges weakly to the constant  $m_0$, then the empirical magnetization process $\{m_n(t)\}_{t \geq 0}$ converges weakly, as $n \to \infty$, to the solution of 
\begin{equation}\label{CW:macro:dyn}
\dot{m}(t) = - 2 \, m(t) \cosh (\beta m(t)) + 2 \sinh (\beta m(t)) 
\end{equation}
with initial condition $m_0$. It is well known that the dynamical system \eqref{CW:macro:dyn} exhibits a phase transition at the critical value $\beta=1$. The solution $m=0$ is an equilibrium of \eqref{CW:macro:dyn} for all values of the parameters. For $\beta \leq 1$,  it is globally stable; whereas, for $\beta > 1$, it loses stability and two new stable fixed point $m = \pm m_\beta$, $m_\beta > 0$, bifurcate. We refer the reader to \cite{Ell85}. For later convenience, let us introduce the notation 
\[
G_{1,\beta}(x) = \cosh(\beta x) - x \sinh(\beta x) \; \mbox{ and } \; G_{2,\beta}(x) = \sinh(\beta x) - x \cosh(\beta x).
\]
Observe that the equilibria of \eqref{CW:macro:dyn} are solutions to $G_{2,\beta}(x)=0$.\\

 \subsection{Main results}

We want to discuss the moderate deviations behavior of the magnetization around its limiting stationary points in the various regimes. We have the following three results that can be obtained as particular cases of the more general Theorem \ref{theorem:mdp_1d_arbitrarypotential} stated and proven in Section~\ref{subsct:MDP&CLT:arbitrary:potential}. The first of our statements is mainly of interest for sub-critical inverse temperatures $\beta < 1$, but  is indeed valid for all $\beta \geq 0$. The results for the critical and super-critical regimes follow afterwards.

\begin{theorem}[Moderate deviations around $0$] \label{theorem:moderate_deviations_CW_subcritical}
Let $\{b_n\}_{n\geq 1}$ be a sequence of positive real numbers such that $b_n \to \infty$ and $b_n^{2} n^{-1} \to 0$. Suppose that $b_n m_n(0)$ satisfies the large deviation principle with speed $n b_n^{-2}$ on $\bR$ with rate function $I_0$. Then the trajectories $\left\{b_n m_n(t)\right\}_{t \geq 0}$ satisfy the large deviation principle on $D_\bR(\bR^+)$:
\begin{equation*}
\PR\left[\left\{b_n m_n(t)\right\}_{t \geq 0} \approx \{\gamma(t)\}_{t \geq 0}  \right] \sim e^{-n b_n^{-2} I(\gamma)},
\end{equation*}
where $I$ is the good rate function
\begin{equation}\label{CW:0MD:RF}
I(\gamma) = \begin{cases}
I_0(\gamma(0)) + \int_0^\infty  \mathcal{L} (\gamma(s),\dot{\gamma}(s)) \dd s & \text{if } \gamma \in \cA\cC, \\
\infty & \text{otherwise},
\end{cases}
\end{equation}
with
\[
\mathcal{L}(x,v) = \frac{1}{8} \left|v + 2x(1-\beta) \right|^2.
\]
\end{theorem}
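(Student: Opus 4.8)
The plan is to derive this result as a consequence of the general path-space large deviation machinery recalled in Appendix~\ref{sct:app:LDPviaHJequation}, following the Feng--Kurtz generator-convergence scheme. Write $Y_n(t) := b_n m_n(t)$; this is a Markov process on the rescaled lattice $b_n E_n$, and I would first compute the generator $A_n^Y$ of $Y_n$ from $\cA_n$ by the change of variables $y = b_n x$. The relevant object is the nonlinear generator $H_n f := b_n^2 n^{-1} e^{-n b_n^{-2} f} A_n^Y e^{n b_n^{-2} f}$ acting on smooth $f$; one plugs in $f$ and expands. Since $x = y/b_n \to 0$ at rate $b_n^{-1}$ while the jump size is $2/n$, one Taylor-expands both $\cosh(\beta x)$, $\sinh(\beta x)$ around $x=0$ and the difference quotients $f(y \pm 2b_n/n) - f(y)$ around $y$, keeping track of the scaling $b_n^2 n^{-1} \to 0$ and $b_n \to \infty$.

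The key step is to show that $H_n f \to H f$ uniformly on compacts, where the limiting operator is the classical Hamiltonian obtained from the Lagrangian $\cL$ in the statement, namely
\begin{equation*}
H f(y) = \frac{1}{2}|f'(y)|^2 - 2y(1-\beta) f'(y),
\end{equation*}
so that $\cL(x,v) = \sup_p (pv - H_x(p)) = \tfrac18 |v + 2x(1-\beta)|^2$ by Legendre transform. The heuristic is that the symmetric part of the generator contributes the quadratic term $\tfrac12 (f')^2$ (a diffusive term with diffusivity $2$, matching the rates $\to 2$ as $x \to 0$), while the linearized drift $-2y(1-\beta)$ — obtained from linearizing $-2m\cosh(\beta m) + 2\sinh(\beta m)$ at $m=0$, which gives $-2(1-\beta)m$, rescaled — contributes the transport term. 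I would verify this expansion carefully, since this is where the hypotheses $b_n \to \infty$ and $b_n^2 n^{-1} \to 0$ are used: the former forces $x \to 0$ so the linearization is exact in the limit, the latter controls the higher-order terms in the jump-size expansion.

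Having established generator convergence, the remaining ingredients for the abstract theorem in the appendix are: (i) exponential tightness of $\{Y_n\}$, which I would get from a standard containment/compact-containment estimate using that the coefficients of $\cA_n$ are bounded on compacts together with the Lagrangian growing quadratically; and (ii) the comparison principle (uniqueness of viscosity solutions) for the Hamilton--Jacobi equation $f - \lambda H f = h$ associated with the above $H$. The latter holds because $H$ has the standard form of a quadratic Hamiltonian with Lipschitz drift coefficient $y \mapsto -2y(1-\beta)$ of at most linear growth; this fits the class of Hamiltonians for which comparison is known (and is presumably covered by a lemma in Section~\ref{sct:proofs} or the general Theorem~\ref{theorem:mdp_1d_arbitrarypotential}). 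The abstract result then yields the path-space LDP with speed $n b_n^{-2}$ and rate function $I_0(\gamma(0)) + \int_0^\infty \cL(\gamma(s),\dot\gamma(s))\,\dd s$, and goodness of $I$ follows from the superlinear growth of $\cL$ in $v$ plus goodness of $I_0$.

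The main obstacle I anticipate is the generator convergence computation: one must expand to sufficiently high order in both the spatial variable $x = y/b_n$ and the jump size $2/n$, and check that all cross terms vanish under the joint scaling. In particular, one needs that the contribution of the drift at the next order (the $O(x^3)$ term $\sim \beta^3 x^3/3$ type corrections to $\sinh$ and $\cosh$) is negligible, which again uses $x \to 0$; and that the "noise" terms $b_n^2 n^{-1}(f'(y))^2$ survive with the right constant while $b_n^4 n^{-2}$-order terms die, using $b_n^2 n^{-1} \to 0$. Keeping the bookkeeping honest here — rather than the soft-analysis parts — is the crux. Everything else (exponential tightness, the comparison principle, Legendre duality giving the explicit $\cL$) is routine given the framework already set up in the paper.
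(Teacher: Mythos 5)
Your overall route is the same as the paper's: the paper proves Theorem~\ref{theorem:moderate_deviations_CW_subcritical} as the special case $U(x)=\beta x^2/2$, $m=0$, $k=0$ of the general Theorem~\ref{theorem:mdp_1d_arbitrarypotential}, whose proof is exactly the generator-convergence / comparison-principle / Legendre-transform scheme you outline, with exponential tightness from the containment function of Lemma~\ref{lemma:FW_one_sided_lipschitz_containment_function} and the comparison principle from Proposition~\ref{proposition:FW_one_sided_lipschitz_comparison_principle} (one-sided Lipschitz drift plus quadratic term).

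However, the one computation you actually write down --- and which you yourself identify as the crux --- is wrong: your limiting Hamiltonian $Hf(y)=\tfrac12|f'(y)|^2-2y(1-\beta)f'(y)$ is not the Legendre dual of the claimed Lagrangian. With quadratic coefficient $\tfrac12$ one gets $\cL(x,v)=\sup_p\bigl(p(v-2x(\beta-1))-\tfrac12 p^2\bigr)=\tfrac12|v+2x(1-\beta)|^2$, not $\tfrac18|v+2x(1-\beta)|^2$. The correct limit is $H(x,p)=2x(\beta-1)p+2p^2$ (as listed in Section~\ref{sct:results}): writing $r(n)=nb_n^{-2}$, the jump size of $b_nm_n$ is $2b_nn^{-1}$ and the total jump rate near $m=0$ is $\approx n$, so the second-order term in the expansion of $r(n)^{-1}e^{-r(n)f}A_ne^{r(n)f}$ is $\tfrac{r(n)}{2}\cdot n\cdot(2b_nn^{-1})^2(f')^2=2(f')^2$; equivalently, in the paper's notation the coefficient is $2G_{1,\beta}(0)=2$. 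Your heuristic ``diffusivity $2$, hence $\tfrac12(f')^2$'' drops the factor $4$ coming from the squared jump size $(2b_n/n)^2$. This is a fixable bookkeeping error, but since the entire rate function hinges on it, the proof as written does not yield the stated $\cL$.
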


\begin{theorem}[Moderate deviations: critical temperature $\beta = 1$] \label{theorem:moderate_deviations_CW_critical}
Let $\{b_n\}_{n\geq 1}$ be a sequence of positive real numbers such that $b_n \to \infty$ and $b_n^{4} n^{-1} \to 0$. Suppose that $b_n m_n(0)$ satisfies the large deviation principle with speed $n b_n^{-4}$ on $\bR$ with rate function $I_0$. Then the trajectories $\left\{b_n m_n(b_n^2 t)\right\}_{t \geq 0}$ satisfy the  large deviation principle on $D_\bR(\bR^+)$:
\begin{equation*}
\PR\left[\left\{b_n m_n(b_n^2 t)\right\}_{t \geq 0} \approx \{\gamma(t)\}_{t \geq 0}  \right] \sim e^{-n b_n^{-4} I(\gamma)},
\end{equation*}
where $I$ is the good rate function
\begin{equation}\label{CW:criticalMD:RF}
I(\gamma) = \begin{cases}
I_0(\gamma(0)) + \int_0^\infty \mathcal{L}(\gamma(s),\dot{\gamma}(s))  \dd s & \text{if } \gamma \in \cA\cC, \\
\infty & \text{otherwise},
\end{cases}
\end{equation}
with
\[
\mathcal{L}(x,v) = \frac{1}{8} \left|v + \frac{2}{3}x^3 \right|^2.
\]
\end{theorem}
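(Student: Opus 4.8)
The plan is to obtain Theorem~\ref{theorem:moderate_deviations_CW_critical} from the generator–convergence approach to large deviations of Feng and Kurtz, in the form developed in Appendix~\ref{sct:app:LDPviaHJequation} (this is also the machinery behind the general Theorem~\ref{theorem:mdp_1d_arbitrarypotential}, of which the present statement is a special case). Fix $\beta = 1$ and consider the time– and space–rescaled process
\[
Z_n(t) := b_n\, m_n(b_n^2 t), \qquad t \ge 0,
\]
with values in the rescaled lattice $b_n E_n \subseteq \bR$ (whose spacing $2 b_n/n$ tends to $0$ since $b_n^4 n^{-1}\to 0$). As $\{m_n(t)\}_{t\ge0}$ is Markov with generator $\cA_n$, the process $\{Z_n(t)\}_{t\ge0}$ is Markov with generator
\[
\cB_n g(z) = b_n^2\left[ n\,\frac{1 - z/b_n}{2}\, e^{z/b_n}\Bigl(g\bigl(z + \tfrac{2 b_n}{n}\bigr) - g(z)\Bigr) + n\,\frac{1+z/b_n}{2}\, e^{-z/b_n}\Bigl(g\bigl(z - \tfrac{2 b_n}{n}\bigr) - g(z)\Bigr)\right],
\]
and $Z_n(0) = b_n m_n(0)$ satisfies the LDP on $\bR$ at speed $n b_n^{-4}$ with good rate function $I_0$ by hypothesis. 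It then suffices to verify the three ingredients of the abstract theorem: (i) convergence, on $C^\infty_c(\bR)$ and uniformly on compact sets, of the exponentially rescaled generators $H_n g := \tfrac{b_n^4}{n}\, e^{-\frac{n}{b_n^4} g}\,\cB_n e^{\frac{n}{b_n^4} g}$ to an operator $H$; (ii) an exponential compact containment estimate for $\{Z_n\}$ at speed $n b_n^{-4}$; and (iii) the comparison principle for $f - \lambda H f = h$ for every $\lambda > 0$ and $h \in C_b(\bR)$. Granting (i)--(iii), the abstract theorem delivers the LDP for $\{Z_n\}$ on $D_\bR(\bR^+)$ with rate function $I_0(\gamma(0)) + \int_0^\infty \mathcal{L}(\gamma(s),\dot{\gamma}(s))\,\dd s$ on $\cA\cC$ (and $+\infty$ otherwise), $\mathcal{L}(x,\cdot)$ being the Legendre transform of $H(x,\cdot)$.

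For (i) I would fix $g \in C^\infty_c(\bR)$ and Taylor expand. Writing $r_n = n b_n^{-4}$, $\Delta_\pm := g(z \pm \tfrac{2 b_n}{n}) - g(z)$ and $P_\pm^{(n)}(z) := \tfrac12(1 \mp z/b_n) e^{\pm z/b_n}$, one has
\[
H_n g(z) = b_n^6 \Bigl[ P_+^{(n)}(z)\bigl(e^{r_n\Delta_+} - 1\bigr) + P_-^{(n)}(z)\bigl(e^{r_n\Delta_-} - 1\bigr)\Bigr].
\]
Since $r_n \Delta_\pm = \pm \tfrac{2}{b_n^3}\,g'(z) + O(b_n^{-2} n^{-1})$, the assumptions $b_n \to \infty$ and $b_n^4 n^{-1} \to 0$ guarantee that, after multiplication by $b_n^6$, only the linear and quadratic terms of $e^{r_n\Delta_\pm}-1$ contribute in the limit, and within them only the pieces proportional to $g'(z)$ and $(g'(z))^2$. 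Combining these with the $\beta = 1$ identity
\[
P_+^{(n)}(z) - P_-^{(n)}(z) = \sinh(z/b_n) - (z/b_n)\cosh(z/b_n) = G_{2,1}(z/b_n) = -\tfrac13 (z/b_n)^3 + O(b_n^{-5})
\]
and with $P_+^{(n)}(z) + P_-^{(n)}(z) \to 1$ uniformly on compacts, one obtains
\[
H_n g(z) \;\longrightarrow\; H g(z) := 2\,\bigl(g'(z)\bigr)^2 - \tfrac23\, z^3\, g'(z),
\]
so the associated Hamiltonian is $(x,p) \mapsto 2 p^2 - \tfrac23 x^3 p$. A one-line Legendre computation then gives $\mathcal{L}(x,v) = \sup_p\{ p v - 2p^2 + \tfrac23 x^3 p\} = \tfrac18\,\bigl|v + \tfrac23 x^3\bigr|^2$, which is precisely the Lagrangian in \eqref{CW:criticalMD:RF}; it is the vanishing at $\beta = 1$ of the linear confining term (which would be $\propto(1-\beta)x$ in the subcritical scaling) that forces the cubic $x^3$, and hence the slower $b_n^2$ time-rescaling and the smaller $b_n^{-4}$ speed at criticality.

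The remaining points (ii) and (iii), where the non-Lipschitz cubic coefficient $x^3$ is the source of difficulty, are the substantive ones. For (ii) I would use that the drift $-\tfrac23 x^3$ is strongly inward-pointing and take a coercive function with compact sublevel sets such as $\Upsilon(z) = \tfrac12\log(1+z^2)$: because $\Upsilon$ is Lipschitz while $-\tfrac23 z^3\Upsilon'(z) = -\tfrac23 z^4/(1+z^2) \to -\infty$, one checks $\sup_n \sup_z H_n\Upsilon(z) < \infty$, which is the exponential compact containment condition. For (iii) I expect to follow the standard route for Hamilton--Jacobi equations with unbounded coefficients: use a penalization/containment argument — again exploiting the sign of the cubic drift — to reduce the comparison inequality to a large but bounded interval, and then invoke a comparison principle for first-order equations on $\bR$ whose Hamiltonian is locally Lipschitz in the state variable (doubling of variables), which becomes available once the $x$-dependence is effectively confined to a compact set. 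I expect step (iii) — the comparison principle in the presence of the $x^3$-term — to be the main obstacle; by contrast (i) is bookkeeping once one has identified which powers of $b_n$ and $n$ survive. Feeding (i)--(iii) into the theorem of Appendix~\ref{sct:app:LDPviaHJequation} yields the asserted LDP for $\{b_n m_n(b_n^2 t)\}_{t\ge0}$ on $D_\bR(\bR^+)$ with the good rate function \eqref{CW:criticalMD:RF}, the restriction to $\cA\cC$ being part of that conclusion.
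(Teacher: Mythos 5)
Your proposal follows essentially the same route as the paper: Theorem~\ref{theorem:moderate_deviations_CW_critical} is obtained there as the case $k=1$, $m=0$, $U(x)=x^2/2$ of Theorem~\ref{theorem:mdp_1d_arbitrarypotential}, whose proof is exactly your step (i) (Taylor expansion of $H_n$, yielding $H(x,p)=-\tfrac23 x^3 p + 2p^2$ and hence $\cL(x,v)=\tfrac18|v+\tfrac23 x^3|^2$ by Legendre transform), combined with Lemma~\ref{lemma:FW_one_sided_lipschitz_containment_function} for containment and Proposition~\ref{proposition:FW_one_sided_lipschitz_comparison_principle} for the comparison principle, all fed into Theorem~\ref{theorem:Abstract_LDP}. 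Your generator computation and the identification of the Hamiltonian are correct, and your $\Upsilon(z)=\tfrac12\log(1+z^2)$ is the paper's containment function up to inessential constants. The one step you leave open and flag as ``the main obstacle'' --- the comparison principle in the presence of the cubic drift --- is in fact the least problematic part in this framework: the vector field $\mathbf{F}(x)=-\tfrac23 x^3$ is globally \emph{one-sided} Lipschitz (indeed monotone decreasing, so $(x-y)(\mathbf{F}(x)-\mathbf{F}(y))\le 0$), and with the quadratic penalizations $\Psi_\alpha(x,y)=\tfrac{\alpha}{2}|x-y|^2$ the cross-term $H(x_\alpha,\alpha(x_\alpha-y_\alpha))-H(y_\alpha,\alpha(x_\alpha-y_\alpha))=\alpha\ip{x_\alpha-y_\alpha}{\mathbf{F}(x_\alpha)-\mathbf{F}(y_\alpha)}$ is already nonpositive, so \eqref{condH:negative:liminf} holds with no localization needed. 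Your alternative localization idea would also work --- Lemma~\ref{lemma:doubling_lemma} confines the optimizing pairs $(x_{\alpha,\varepsilon},y_{\alpha,\varepsilon})$ to a compact set independent of $\alpha$, on which $\mathbf{F}$ is Lipschitz, and then part (c) of that lemma kills the cross-term --- but the one-sided Lipschitz observation is the cleaner way to close the argument and is the one the paper uses.
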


\begin{theorem}[Moderate deviations: super-critical temperatures $\beta > 1$] \label{theorem:moderate_deviations_CW_supercritical}
Let \mbox{$m \in \{-m_\beta,+m_\beta\}$} be a non-zero solution of $G_{2,\beta}(x) = 0$. Moreover, let $\{b_n\}_{n\geq 1}$ be a sequence of positive real numbers such that $b_n \to \infty$ and $b_n^{2} n^{-1} \to 0$. Suppose that $b_n (m_n(0) - m)$ satisfies the large deviation principle with speed $n b_n^{-2}$ on $\bR$ with rate function $I_0$. Then the trajectories $\left\{b_n (m_n(t) - m)\right\}_{t \geq 0}$ satisfy the large deviation principle on $D_\bR(\bR^+)$:
\begin{equation*}
\PR\left[\left\{b_n (m_n(t) - m) \right\}_{t \geq 0} \approx \{\gamma(t)\}_{t \geq 0}  \right] \sim e^{-n b_n^{-2} I(\gamma)},
\end{equation*}
where $I$ is the good rate function
\begin{equation}\label{CW:supercriticalMD:RF}
I(\gamma) = \begin{cases}
I_0(\gamma(0)) + \int_0^\infty  \mathcal{L} (\gamma(s),\dot{\gamma}(s)) \dd s & \text{if } \gamma \in \cA\cC, \\
\infty & \text{otherwise},
\end{cases}
\end{equation}
with
\[
\mathcal{L}(x,v) = \frac{(v-2xG_{2,\beta}'(m))^2}{8 G_{1,\beta}(m)}.
\]
\end{theorem}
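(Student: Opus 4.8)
The plan is to obtain this as the special case of the general Theorem~\ref{theorem:mdp_1d_arbitrarypotential} attached to the equilibrium $m$ of the macroscopic flow \eqref{CW:macro:dyn}, running the generator-convergence and Hamilton--Jacobi scheme of Appendix~\ref{sct:app:LDPviaHJequation}. First I would pass to the fluctuation process $Y_n(t) := b_n(m_n(t)-m)$, which lives on $b_n(E_n - m)$ and has generator $\cA_n^Y$ obtained from $\cA_n$ via the substitution $x = m + b_n^{-1}y$: the microscopic jumps $x \mapsto x \pm \tfrac2n$ become $y \mapsto y \pm \tfrac{2 b_n}{n}$, with rates $\tfrac n2(1-x)e^{\beta x}$ and $\tfrac n2(1+x)e^{-\beta x}$ unchanged. (No time change is needed here, in contrast to the critical case, because the linearization of $2 G_{2,\beta}$ at $m$ is already of order one.) With speed $r_n := n b_n^{-2}$ one then studies the exponential generator $H_n f := r_n^{-1} e^{-r_n f} \cA_n^Y e^{r_n f}$ on $f \in C_c^\infty(\bR)$.

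The key computation is the limit of $H_n f$. Since $r_n \cdot \tfrac{2 b_n}{n} = 2 b_n^{-1} \to 0$, Taylor expansion of the exponentials and of $f$ gives, uniformly on compacts,
\[
e^{r_n[f(y \pm 2 b_n/n) - f(y)]} - 1 = \pm \frac{2}{b_n} f'(y) + \frac{2}{b_n^2} f'(y)^2 + o(b_n^{-2}),
\]
where every dropped contribution is $o(b_n^{-2})$ precisely because $b_n \to \infty$ and $b_n^2 n^{-1} \to 0$; multiplying by the two rates and using $(1-x)e^{\beta x} \mp (1+x)e^{-\beta x} = 2 G_{2,\beta}(x)$, respectively $2 G_{1,\beta}(x)$, yields
\[
H_n f(y) = 2 b_n\, G_{2,\beta}\!\bigl(m + b_n^{-1}y\bigr) f'(y) + 2\, G_{1,\beta}\!\bigl(m + b_n^{-1}y\bigr) f'(y)^2 + o(1).
\]
Because $G_{2,\beta}(m) = 0$ one has $b_n G_{2,\beta}(m + b_n^{-1}y) = y\, G_{2,\beta}'(m) + O(b_n^{-1})$, so $H_n f \to H f$ uniformly on compacts with $H f(y) = \mathcal{H}(y, f'(y))$ and
\[
\mathcal{H}(y,p) = 2 G_{2,\beta}'(m)\, y p + 2 G_{1,\beta}(m)\, p^2 .
\]
Here $G_{1,\beta}(m) > 0$: from $G_{2,\beta}(m) = 0$ one gets $\tanh(\beta m) = m \in (-1,1)$, hence $G_{1,\beta}(m) = \cosh(\beta m)(1 - m^2) > 0$, so $\mathcal{H}(y,\cdot)$ is a strictly convex quadratic; moreover stability of $m$ is equivalent to $G_{2,\beta}'(m) < 0$.

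To turn this convergence into the path-space LDP I would verify the hypotheses of the abstract theorem: exponential tightness of $\{Y_n\}$ through a containment function such as $\Upsilon(y) = \tfrac12 \log(1 + y^2)$ --- it has compact sublevel sets, and $\sup_n \sup_y H_n \Upsilon(y) < \infty$ because $\Upsilon', \Upsilon''$ are bounded and $x \mapsto G_{2,\beta}(x)/(x - m)$ is continuous on $[-1,1]$, so the potentially dangerous term $2 b_n \Upsilon'(y) G_{2,\beta}(m + b_n^{-1}y)$ stays bounded (the confining sign $G_{2,\beta}'(m) < 0$ even gives a favourable limit as $|y| \to \infty$); convergence $H_n \to H$ on a domain rich enough to include the perturbed functions $f + \Upsilon$; and the comparison principle for $f - \lambda H f = h$ on $\bR$, $\lambda > 0$, $h \in C_b(\bR)$. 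I expect the comparison principle to be the main obstacle, the difficulty being the coefficient of $\mathcal{H}$ that grows linearly in $y$; it should be handled, as in Theorem~\ref{theorem:mdp_1d_arbitrarypotential} and the appendix, by the doubling-of-variables method with the non-compactness tamed by $\Upsilon$, the sign $G_{2,\beta}'(m) < 0$ making the resulting boundary terms work in our favour. Granting these, the abstract result gives the LDP on $D_\bR(\bR^+)$ at speed $n b_n^{-2}$ with rate function $I(\gamma) = I_0(\gamma(0)) + \int_0^\infty \mathcal{L}(\gamma(s), \dot\gamma(s))\, \dd s$ on $\cA\cC$ and $+\infty$ otherwise, where $\mathcal{L}(y,v) = \sup_{p}\bigl(pv - \mathcal{H}(y,p)\bigr)$.

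It remains to identify the Lagrangian, which is immediate: since $\mathcal{H}(y,\cdot)$ is a strictly convex quadratic, the supremum in $\mathcal{L}(y,v)$ is attained at $p^\ast = (v - 2 G_{2,\beta}'(m) y)/(4 G_{1,\beta}(m))$, and substituting gives
\[
\mathcal{L}(y,v) = \frac{\bigl(v - 2 y\, G_{2,\beta}'(m)\bigr)^2}{8\, G_{1,\beta}(m)},
\]
which is the asserted Lagrangian.
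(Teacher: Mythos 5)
Your proposal is correct and follows essentially the same route as the paper: the theorem is obtained as the case $U(x)=\beta x^2/2$, $k=0$ of the general Theorem~\ref{theorem:mdp_1d_arbitrarypotential}, whose proof consists of exactly the generator computation, Taylor expansion to the quadratic limiting Hamiltonian $H(y,p)=2G_{2,\beta}'(m)yp+2G_{1,\beta}(m)p^2$, verification of the containment function and comparison principle via the one-sided Lipschitz drift (Lemma~\ref{lemma:FW_one_sided_lipschitz_containment_function} and Proposition~\ref{proposition:FW_one_sided_lipschitz_comparison_principle}), and the Legendre transform you carry out. The only cosmetic difference is that the paper checks the containment bound for the limiting $H$ rather than uniformly in $n$, and does not need the sign $G_{2,\beta}'(m)<0$ since a linear drift is one-sided Lipschitz regardless.
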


The rate functions \eqref{CW:0MD:RF} and \eqref{CW:supercriticalMD:RF} have a similar structure. Indeed, whenever $\beta < 1$, $m = 0$ is the unique solution of $G_{2,\beta}(x) = 0$; moreover, it yields $G_{1,\beta}(0) = 1$ and $G_{2,\beta}'(0) = \beta - 1$. \\
By choosing the sequence $b_n = n^\alpha$, with $\alpha > 0$, we can rephrase Theorems~\ref{theorem:moderate_deviations_CW_subcritical}, \ref{theorem:moderate_deviations_CW_critical} and \ref{theorem:moderate_deviations_CW_supercritical} in terms of more familiar ``moderate'' scalings involving powers of the volume. We therefore get estimates for the probability of a typical trajectory on a scale that is between a law of large numbers and a central limit theorem. We give a schematic summary of these special results in Table~\ref{tab:CW:deviations} below. 

\smallskip

For any of the three cases above, we define the \textit{Hamiltonian} $H : \bR \times \bR \rightarrow \bR$ by taking the Legendre transform of $\cL$: $H(x,p) = \sup_v pv - \cL(x,v)$. It is well known that the rate function $S$ of the stationary measures of the Markov processes, also known as the quasi-potential, solves the equation $H(x,S'(x)) = 0$, cf. Theorem 5.4.3 in \cite{FW98}. We use this property to show that our results are consistent with the moderate deviation principles obtained for the stationary measures in \cite{EiLo04}. We give the Hamiltonian of the three cases above
\begin{enumerate}[(a)]
\item \textit{sub-critical temperatures}, Theorem \ref{theorem:moderate_deviations_CW_subcritical} $H(x,p) = 2x(\beta - 1)p + 2p^2$,
\item \textit{critical temperature}, Theorem \ref{theorem:moderate_deviations_CW_critical} $H(x,p) = - \frac{2}{3}x^3p + 2p^2$,
\item \textit{super-critical temperatures}, Theorem \ref{theorem:moderate_deviations_CW_supercritical} $H(x,p) = 2xG_2'(m)p + 2 G_1(m) p^2$.
\end{enumerate}
The stationary rate function $S$ in each of these three cases obtained in \cite[Theorem 1.18]{EiLo04} is given by
\begin{enumerate}[(a)]
\item \textit{sub-critical temperatures}, $S(x) = \frac{1}{2}(1-\beta)x^2$,
\item \textit{critical temperature}, $S(x) = \frac{1}{12}x^4$,
\item \textit{super-critical temperatures}, $S(x) = \frac{1}{2}cx^2$, where $c := (\phi''(\beta m))^{-1} - \beta$, and $m$ is a solution of $G_{2,\beta}(x) = 0$ and $\phi(x) = \log \left(\cosh(x)\right)$.
\end{enumerate}
For (a) and (b), it is clear that $H(x,S'(x)) = 0$ for all $x$. For (c), since $m = \tanh(\beta m)$, by inverse function theorem we obtain $\phi''(\beta m) = 1 - m^2$. Therefore, we have
\begin{multline*}
c = (\phi''(\beta m))^{-1} - \beta = \left(1 - m \tanh(\beta m)\right)^{-1} - \beta \\
= - \left[\frac{\beta G_{1,\beta}(m) - \cosh(\beta m)}{G_{1,\beta}(m)} \right] = - \frac{G_{2,\beta}'(m)}{G_{1,\beta}(m)},
\end{multline*}
which implies that also in this case $H(x,S'(x)) = 0$ for all $x$.

The next theorem complements the results in \cite[Proposition~2.2]{CoDaP12} for the subcritical regime and shows that also in the supercritical case the fluctuations around a ferromagnetic stationary point  converge to a diffusion process. As previously, the statement is a direct consequence of a more general central limit theorem given in Theorem~\ref{theorem:CLT_1d_arbitrarypotential} below.

\begin{theorem}[Central limit theorem: super-critical temperatures $\beta > 1$] \label{theorem:CLT_CW_supercritical}
Let \mbox{$m \in \{-m_\beta,+m_\beta\}$} be a non-zero solution of $G_{2,\beta}(x) = 0$. Suppose that $n^{1/2} (m_n(0) - m)$ converges in law to $\nu$. Then the process $n^{1/2}(m_n(t) - m)$ converges weakly in law on $D_\bR(\bR^+)$ to the unique solution of:
\begin{equation}\label{CLT_CW_supercritical}
\begin{cases}
\dd Y(t) = 2Y(t)G_{2,\beta}'(m) \dd t + 2\sqrt{G_{1,\beta}(m)} \, \dd W(t) \\
Y(0) \sim \nu,
\end{cases}
\end{equation}
where $W(t)$ is a standard Brownian motion on $\bR$.
\end{theorem}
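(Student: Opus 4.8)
The plan is to treat this as a diffusion approximation: I would show that the generator of the rescaled fluctuation process converges to that of the linear diffusion in \eqref{CLT_CW_supercritical}, and then upgrade this to process convergence via the martingale-problem machinery for Markov processes.

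First I would set up the rescaled process. Fix $m\in\{-m_\beta,m_\beta\}$ and put $Y_n(t):=\sqrt n\,(m_n(t)-m)$, a Markov jump process on the lattice $\sqrt n\,(E_n-m)$, which fills up $\bR$ as $n\to\infty$. Writing $r^+(x)=\tfrac{1-x}{2}e^{\beta x}$ and $r^-(x)=\tfrac{1+x}{2}e^{-\beta x}$ one has the identities $r^+(x)+r^-(x)=G_{1,\beta}(x)$ and $r^+(x)-r^-(x)=G_{2,\beta}(x)$. Conjugating $\cA_n$ by the change of variables $x=m+y/\sqrt n$, $f(x)=g(\sqrt n(x-m))$, the generator of $Y_n$ reads
\[
\cB_n g(y)= n\,r^+\!\big(m+\tfrac{y}{\sqrt n}\big)\Big[g\big(y+\tfrac{2}{\sqrt n}\big)-g(y)\Big]+ n\,r^-\!\big(m+\tfrac{y}{\sqrt n}\big)\Big[g\big(y-\tfrac{2}{\sqrt n}\big)-g(y)\Big].
\]
For $g\in C_c^\infty(\bR)$, a second-order Taylor expansion of $g(y\pm\tfrac{2}{\sqrt n})$ together with the two identities gives
\[
\cB_n g(y)= 2\sqrt n\,G_{2,\beta}\!\big(m+\tfrac{y}{\sqrt n}\big)\,g'(y)+2\,G_{1,\beta}\!\big(m+\tfrac{y}{\sqrt n}\big)\,g''(y)+O\!\big(n^{-1/2}\big)
\]
uniformly in $y$. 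Since $G_{2,\beta}(m)=0$, a further expansion gives $\sqrt n\,G_{2,\beta}(m+y/\sqrt n)=y\,G_{2,\beta}'(m)+O(n^{-1/2})$, and $G_{1,\beta}(m+y/\sqrt n)\to G_{1,\beta}(m)$; hence $\|\cB_ng-\cB g\|_\infty\to 0$ with
\[
\cB g(y)= 2\,G_{2,\beta}'(m)\,y\,g'(y)+2\,G_{1,\beta}(m)\,g''(y),
\]
which is precisely the generator of \eqref{CLT_CW_supercritical}. Note $G_{1,\beta}(m)=\cosh(\beta m)(1-m^2)>0$ since $|m_\beta|<1$, so the diffusion is non-degenerate, and since $m_\beta$ is a stable equilibrium of \eqref{CW:macro:dyn} we have $G_{2,\beta}'(m)<0$, so $\cB$ generates a mean-reverting linear diffusion whose martingale problem is well posed (unique strong solution, $C_c^\infty$ a core).

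It then remains to pass from generator convergence to process convergence. I would verify the compact containment condition $\lim_{K\to\infty}\limsup_n\PR[\sup_{s\le T}|Y_n(s)|>K]=0$ by a Lyapunov estimate: applying $\cB_n$ to $y\mapsto 1+y^2$ and using $G_{2,\beta}'(m)<0$ yields a bound of the form $\cB_n(1+y^2)\le C-c\,y^2$ on $\{|y|\le\delta\sqrt n\}$ for suitable $c,\delta>0$, while on the complement the deterministic bound $|Y_n(t)|\le 2\sqrt n$ (from $m_n\in[-1,1]$) together with a hitting-time estimate closes the argument. Tightness in $D_\bR(\bR^+)$ then follows from the Aldous--Rebolledo criterion applied to the martingales $g(Y_n(t))-g(Y_n(0))-\int_0^t\cB_ng(Y_n(s))\,\dd s$ with $g\in C_c^\infty$, using that $\cB_n g$ is uniformly bounded. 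Combining this with $Y_n(0)\Rightarrow\nu$, the convergence $\cB_n\to\cB$ on the core $C_c^\infty$, and well-posedness of the limit, the diffusion-approximation results of \cite{EK86} (e.g.\ Theorem~4.8.2) give $Y_n\Rightarrow Y$ on $D_\bR(\bR^+)$ with $Y$ the unique solution of \eqref{CLT_CW_supercritical}.

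The main obstacle is the non-compactness of the state space: generator convergence is controlled only on compactly supported test functions, so moments of $Y_n$ are not automatically under control, and the compact containment estimate — exploiting the stability inequality $G_{2,\beta}'(m)<0$ together with the a priori confinement $m_n(t)\in[-1,1]$ — is where the real work sits. Equivalently, the statement is the special case of the general Theorem~\ref{theorem:CLT_1d_arbitrarypotential} obtained by expanding $2\,G_{2,\beta}$ around $m$, and the computation above is exactly what drives that general result.
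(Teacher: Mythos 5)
Your proposal is correct and follows essentially the same route as the paper: the statement is deduced as the $k=0$, $U(x)=\beta x^2/2$ case of Theorem~\ref{theorem:CLT_1d_arbitrarypotential}, whose proof likewise combines uniform convergence of the rescaled generators on compactly supported test functions with a compact containment estimate and the martingale-problem convergence results of \cite{EK86}. The only cosmetic differences are that the paper uses the bounded Lyapunov function $\log\sqrt{1+x^2}$ with a stopping-time and Doob's inequality argument (rather than $1+y^2$ with Aldous--Rebolledo) and invokes Corollary~4.8.16 of \cite{EK86}.
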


We want to conclude the analysis by considering moderate deviations and non-standard central limit theorem for volume-dependent temperatures decreasing to the critical point. In the sequel let $\{ m_n^\beta(t) \}_{t \geq 0}$ denote the process evolving at temperature~$\beta$.

\begin{theorem}[Moderate deviations: critical temperature $\beta = 1$, temperature rescaling] \label{theorem:moderate_deviations_CW_critical_temperature_rescaling}
Let $\kappa \geq 0$ and let $\{b_n\}_{n\geq 1}$ be a sequence of positive real numbers such that $b_n \rightarrow \infty$ and $b_n^4 n^{-1}\rightarrow 0$. Suppose that $b_n m_n^{1 + \kappa b_n^{-2}}(0)$ satisfies the large deviation principle with speed $n b_n^{-4}$ on $\bR$ with rate function $I_0$. Then the trajectories $\left\{b_n m_n^{1 + \kappa b_n^{-2}}(b_n^{2} t) \right\}_{t \geq 0}$ satisfy the  large deviation principle on $D_\bR(\bR^+)$:
\begin{equation*}
\PR\left[\left\{b_n m_n^{1 + \kappa b_n^{-2}}(b_n^2t)\right\}_{t \geq 0} \approx \{\gamma(t)\}_{t \geq 0}  \right] \sim e^{-n b_n^{-4}I(\gamma)},
\end{equation*}
where $I$ is the good rate function
\begin{equation}\label{CW:criticalMD:temp_resc:RF}
I(\gamma) = \begin{cases}
I_0(\gamma(0)) + \int_0^\infty \mathcal{L}(\gamma(s),\dot{\gamma}(s))\dd s & \text{if } \gamma \in \cA\cC, \\
\infty & \text{otherwise},
\end{cases}
\end{equation}
with
\[
\mathcal{L}(x,v) = \frac{1}{8}  \left| v - 2\left( \kappa x -  \frac{1}{3}x^3 \right) \right|^2\,.
\]
\end{theorem}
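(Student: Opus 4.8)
The plan is to obtain the statement as a special case of the general path-space moderate deviation principle, Theorem~\ref{theorem:mdp_1d_arbitrarypotential}: once we identify the correct limiting Hamiltonian and check it against the hypotheses of that theorem, the LDP with rate function \eqref{CW:criticalMD:temp_resc:RF} follows. Write $\beta_n := 1 + \kappa b_n^{-2}$, $r_n := n b_n^{-4}$, and $Y_n(t) := b_n\, m_n^{\beta_n}(b_n^2 t)$. The two things to verify are: (i) the exponential (nonlinear) generators $H_n f := r_n^{-1} e^{-r_n f}\, \cB_n\, e^{r_n f}$ of $Y_n$ converge, for smooth $f$ (and, with the usual modification by a containment function near infinity, on the full class of test functions used by Theorem~\ref{theorem:mdp_1d_arbitrarypotential}), to
\[
H f(x) = 2(f'(x))^2 + 2\left(\kappa x - \tfrac{1}{3}x^3\right)f'(x);
\]
and (ii) the associated data --- drift $2(\kappa x - \tfrac{1}{3}x^3) = -V'(x)$ with $V(x) = \tfrac{1}{6}x^4 - \kappa x^2$, and constant diffusivity --- meet the coercivity and comparison-principle hypotheses of that theorem.

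First I would compute the generator $\cB_n$ of $Y_n$: composing the time dilation $t \mapsto b_n^2 t$ and the spatial blow-up $x = y/b_n$ with $\cA_n$ (with $\beta$ replaced by $\beta_n$) produces a pure-jump generator with steps $\pm 2 b_n/n$ and rates $b_n^2 n\,\tfrac{1 \mp y/b_n}{2}\, e^{\pm \beta_n y/b_n}$. The heart of the argument is then the Taylor expansion of $H_n f$ for smooth $f$. Using $b_n^2 n / r_n = b_n^6$ and $r_n[f(y \pm 2b_n/n) - f(y)] = \pm 2 b_n^{-3} f'(y) + O(b_n^{-2}n^{-1})$, I would expand $e^{r_n \Delta_\pm f} - 1$ (with $\Delta_\pm f := f(y\pm 2b_n/n) - f(y)$) to third order and the rate prefactors $\tfrac{1\mp y/b_n}{2}e^{\pm\beta_n y/b_n}$ to order $b_n^{-3}$, noting that $\beta_n y/b_n = y/b_n + \kappa y\, b_n^{-3}$, so the temperature perturbation enters at the same order $b_n^{-3}$ as the cubic term of $e^{y/b_n}$. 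The contributions of orders $b_n^{-3}f'$ and $b_n^{-9}(f')^3$ cancel between the $+$ and $-$ pieces, and at order $b_n^{-6}$ what survives is $2(f')^2$ from the square of the exponential together with $2(\kappa x - \tfrac{1}{3}x^3)f'$ from the cross term of the $b_n^{-3}$ prefactor correction against $2 b_n^{-3}f'$. The hypothesis $b_n^4 n^{-1}\to 0$ is precisely what forces the diffusive correction (of order $b_n^{-2}n^{-1}$, i.e.\ of order $b_n^{-6}\cdot b_n^4 n^{-1}$ after multiplication by $b_n^6$) and all higher remainders to vanish, so $H_n f \to H f$ uniformly on compact sets. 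A one-line Legendre-transform check, $\sup_v(pv - \tfrac{1}{8}|v - 2(\kappa x - \tfrac{1}{3}x^3)|^2) = 2p^2 + 2(\kappa x - \tfrac{1}{3}x^3)p$, confirms that $\mathcal{L}$ in the statement is the Legendre dual of $H$.

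It then remains to verify the hypotheses of Theorem~\ref{theorem:mdp_1d_arbitrarypotential}. The diffusivity is the positive constant $4$; the potential $V(x) = \tfrac{1}{6}x^4 - \kappa x^2$ is smooth and coercive for every $\kappa \geq 0$ (its quartic leading term dominates at infinity), which supplies the exponential compact-containment function and the structure behind the Hamilton--Jacobi comparison principle; and since $\beta_n \to 1$ with $\beta_n$ bounded, the expansion above is locally uniform and the $n$-dependence of the temperature does not disturb the containment estimate (the dominant drift term $-\tfrac{2}{3}x^3$ is the same as in the critical case $\beta = 1$, so the containment function from there works here). Feeding these facts into Theorem~\ref{theorem:mdp_1d_arbitrarypotential} yields the path-space LDP on $D_\bR(\bR^+)$ at speed $n b_n^{-4}$ with rate function \eqref{CW:criticalMD:temp_resc:RF}.

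The main obstacle is the bookkeeping in the Taylor expansion: tracking which coefficients cancel and which survive, and in particular checking that the two independent $b_n^{-2}$-scales --- one from the time/space rescaling, one from $\beta_n - 1 = \kappa b_n^{-2}$ --- conspire to deposit the linear drift $\kappa x$ and the cubic drift $-\tfrac{1}{3}x^3$ together at the single relevant order $b_n^{-6}$, with every remaining term suppressed exactly by $b_n^4 n^{-1}\to 0$. A secondary, more technical point is confirming that the exponential compact-containment argument of Theorem~\ref{theorem:mdp_1d_arbitrarypotential} is robust to an $n$-varying temperature; but because the relevant drift term $-\tfrac{2}{3}x^3$ dominates for large $|x|$ and $\beta_n \to 1$, the same containment function as in the critical case $\beta = 1$ applies.
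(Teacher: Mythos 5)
Your computation is correct and follows the paper's own route: compute the generator of $b_n m_n^{1+\kappa b_n^{-2}}(b_n^2 t)$, Taylor expand the nonlinear generator $H_n$ (noting that $\beta_n-1=\kappa b_n^{-2}$ and the cubic term of the hyperbolic expansion both contribute at order $b_n^{-3}$, and that $b_n^4 n^{-1}\to 0$ kills the second-order correction), identify $H(x,p)=2\left(\kappa x-\tfrac{1}{3}x^3\right)p+2p^2$, and Legendre transform. The one correction is your framing: the result is \emph{not} a special case of Theorem~\ref{theorem:mdp_1d_arbitrarypotential}, whose potential $U$ is fixed and $n$-independent and hence cannot accommodate $\beta_n=1+\kappa b_n^{-2}$ (which is exactly why the paper proves this theorem separately); the concluding appeal should instead be to the abstract machinery, Theorem~\ref{theorem:Abstract_LDP} together with Lemma~\ref{lemma:FW_one_sided_lipschitz_containment_function} and Proposition~\ref{proposition:FW_one_sided_lipschitz_comparison_principle}, whose hypotheses (uniform convergence of $H_n f$ on compacts, good containment function, comparison principle for the one-sided Lipschitz drift $2(\kappa x-\tfrac13 x^3)$) are precisely the facts you verified.
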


Notice that in this borderline case the moderate deviations rate function \eqref{CW:criticalMD:temp_resc:RF} is a sort of mixture of the one at the criticality \eqref{CW:criticalMD:RF} and the Gaussian rate function~\eqref{CW:0MD:RF}.

\begin{theorem}[Critical fluctuations: critical temperature $\beta=1$, temperature rescaling]
\label{theorem:CLT_CW_critical_temperature_rescaling}
Let $\kappa \geq 0$ and suppose that $n^{1/4} m_n^{1 + \kappa n^{-1/2}}(0)$ converges in law to $\nu$.

Then the process $n^{1/4}m_n^{1 + \kappa n^{-1/2}}(n^{1/2}t) $ converges weakly in law on $D_\bR(\bR^+)$ to the unique solution of:
\begin{equation}\label{CLT_CW_critical_temperature_rescaling}
\begin{cases}
\dd Y(t) = 2 \left[ \kappa Y(t) - \frac{1}{3} Y(t)^3\right] \dd t + 2 \, \dd W(t) \\
Y(0) \sim \nu,
\end{cases}
\end{equation}
where $W(t)$ is a standard Brownian motion on $\bR$.
\end{theorem}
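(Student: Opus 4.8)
The plan is to prove this central limit theorem by the generator-convergence criterion for convergence in distribution of Markov processes (Ethier--Kurtz \cite{EK86}, Chapter~4), the strategy also underlying the path-space central limit theorem of \cite{CoDaP12}; the temperature rescaling produces an $n$-dependent generator whose limit is the nonlinear diffusion operator in \eqref{CLT_CW_critical_temperature_rescaling}. Throughout, set $b_n = n^{1/4}$ and let $Y_n(t) := b_n\, m_n^{1 + \kappa b_n^{-2}}(b_n^2 t)$, a Markov process on $D_n := b_n E_n \subseteq [-b_n, b_n]$.

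\emph{Identification of the limiting generator.} Starting from $\cA_n$ at inverse temperature $\beta = 1 + \kappa b_n^{-2}$, a change of variables shows that $Y_n$ has generator
\begin{equation*}
\cB_n g(y) = b_n^2\, n\left[\tfrac{1-x}{2}\, e^{\beta x}\left(g\!\left(y + \tfrac{2 b_n}{n}\right) - g(y)\right) + \tfrac{1+x}{2}\, e^{-\beta x}\left(g\!\left(y - \tfrac{2 b_n}{n}\right) - g(y)\right)\right], \quad x = b_n^{-1} y .
\end{equation*}
A third-order Taylor expansion of $g \in C_c^\infty(\bR)$, together with the identities $\tfrac12\bigl[(1-x)e^{\beta x} - (1+x)e^{-\beta x}\bigr] = G_{2,\beta}(x)$ and $\tfrac12\bigl[(1-x)e^{\beta x} + (1+x)e^{-\beta x}\bigr] = G_{1,\beta}(x)$, yields
\begin{equation*}
\cB_n g(y) = 2 b_n^3\, G_{2,\beta}(x)\, g'(y) + 2 b_n^4 n^{-1}\, G_{1,\beta}(x)\, g''(y) + R_n(y), \qquad \|R_n\|_\infty \leq c\, b_n^5 n^{-2}\,\|g'''\|_\infty ,
\end{equation*}
and $b_n^5 n^{-2} = n^{-3/4} \to 0$, so $R_n$ is negligible. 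Expanding $G_{2,\beta}(x) = (\beta - 1)x - \tfrac13 x^3 + O(x^5) + O\bigl((\beta-1)x^3\bigr)$ and inserting $\beta - 1 = \kappa b_n^{-2}$, $x = b_n^{-1} y$, the first term becomes $2\bigl(\kappa y - \tfrac13 y^3\bigr) g'(y) + o(1)$ uniformly on compact sets; since $b_n^4 n^{-1} = 1$ and $G_{1,\beta}(b_n^{-1} y) \to 1$, the second term becomes $2 g''(y) + o(1)$. Because $g$ has compact support and the jump size $2 b_n / n = 2 n^{-3/4} \to 0$, the convergence holds uniformly on all of $\bR$:
\begin{equation*}
\sup_{y} \bigl| \cB_n g(y) - \cA g(y)\bigr| \to 0, \qquad \cA g(y) := 2\Bigl(\kappa y - \tfrac13 y^3\Bigr) g'(y) + 2 g''(y) .
\end{equation*}

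\emph{Well-posedness of the limit and compact containment.} The operator $\cA$ is the generator of the diffusion \eqref{CLT_CW_critical_temperature_rescaling}; its coefficients are smooth, and with $V(y) = 1 + y^2$ one has $\cA V(y) = 4\kappa y^2 - \tfrac43 y^4 + 4 \leq C\bigl(1 + V(y)\bigr)$, so the stochastic differential equation has a unique non-exploding strong solution, the martingale problem for $(\cA, C_c^\infty(\bR))$ is well posed, and $C_c^\infty(\bR)$ is a core for the generator of the associated Feller semigroup. Applying $\cB_n$ to the same $V$ (whose Taylor expansion terminates, since $V''' \equiv 0$) gives the exact identity $\cB_n V(y) = 4 b_n^4\, x\, G_{2,\beta}(x) + 4 b_n^4 n^{-1}\, G_{1,\beta}(x)$ with $x = b_n^{-1} y \in [-1,1]$. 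The elementary inequality $x\, G_{2,\beta}(x) \leq (\beta - 1)\, x^2$, valid for all $|x| \leq 1$ and all $\beta$ in a right neighbourhood of $1$ (the leading term cancels and the remaining quartic contribution is negative there), yields $b_n^4 x\, G_{2,\beta}(x) \leq \kappa b_n^2 x^2 = \kappa y^2$, while $G_{1,\beta}$ is bounded on $[-1,1]$ uniformly in $n$; hence $\cB_n V \leq 4\kappa V + C'$ on $D_n$ for all large $n$. Since $Y_n(0) = n^{1/4} m_n^{1 + \kappa n^{-1/2}}(0)$ converges in law to $\nu$ and is therefore tight, a Dynkin--Gronwall estimate on $\bE\bigl[V\bigl(Y_n(t \wedge \tau_R)\bigr)\bigr]$, with $\tau_R$ the first exit time of $Y_n$ from $[-R,R]$, gives the compact containment condition.

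\emph{Conclusion, and the main difficulty.} Uniform generator convergence on the core $C_c^\infty(\bR)$, the compact containment condition, and the convergence $Y_n(0) \Rightarrow \nu$ are precisely the hypotheses of the convergence theorem for Markov processes (\cite{EK86}, Chapter~4), so $Y_n \Rightarrow Y$ weakly on $D_\bR(\bR^+)$, where $Y$ is the unique solution of \eqref{CLT_CW_critical_temperature_rescaling}. The delicate point is reconciling the $n$-dependent bounded state spaces $D_n \subseteq [-b_n, b_n]$ with a limiting diffusion on all of $\bR$: the generator expansion above is valid only on compact sets of $y$, so the compact-containment estimate must exploit the full restoring effect of $G_{2,\beta}$ — the cancellation of its linear part against $(\beta-1)x$ and the sign of its quartic term — uniformly in the diverging-temperature parameter $\beta = 1 + \kappa b_n^{-2}$. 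Everything else reduces to careful but routine Taylor expansions.
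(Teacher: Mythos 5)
Your strategy is exactly the one the paper intends: it omits the proof of this theorem as ``a simple adaptation'' of Theorem \ref{theorem:CLT_1d_arbitrarypotential} (generator convergence plus compact containment plus Corollary 4.8.16 of \cite{EK86}) combined with the temperature-rescaled Taylor expansions of Section \ref{subsct:MDP:critical:temp_rescaling}. Your generator computation is correct, including the exact cancellation $b_n^4 n^{-1}=1$, the uniformity of the convergence for compactly supported $g$, and the inequality $x\,G_{2,\beta}(x)\leq(\beta-1)x^2$, which in fact holds for \emph{all} real $x$ and all $1\leq\beta<3$, since
\begin{equation*}
x\,G_{2,\beta}(x)-(\beta-1)x^2=\sum_{k\geq 2}\frac{\beta^{2k-2}x^{2k}}{(2k-2)!}\Bigl(\frac{\beta}{2k-1}-1\Bigr)\leq 0 .
\end{equation*}

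The one step that does not go through as written is the compact containment argument. You take the unbounded Lyapunov function $V(y)=1+y^2$ and start a Gronwall iteration from $\bE[V(Y_n(0))]$, justified only by the weak convergence $Y_n(0)\Rightarrow\nu$. Weak convergence gives tightness but does \emph{not} give a uniform bound on second moments, so $\sup_n\bE[V(Y_n(0))]$ may be infinite and the Dynkin--Gronwall estimate has no starting point. This is precisely why the paper's Lemma \ref{lmm:asymptotics_stopping_times} works instead with $\tilde f(x)=\log\sqrt{1+x^2}$: that function has uniformly small increments (so the martingale part is controlled by Doob via \eqref{estimate_gradient}--\eqref{estimate_quadratic_variation}), its drift $A_n\tilde f$ is bounded above by a constant independent of the truncation level, and the initial condition is handled \emph{in probability}, $\PR(\tilde f(X_n(0))\geq c_0(\varepsilon))\leq\varepsilon/3$, which is all that tightness provides. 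Your argument is salvageable without changing $V$: condition on the event $\{|Y_n(0)|\leq R_0\}$, whose complement has probability at most $\varepsilon$ uniformly in $n$ by tightness, run Gronwall from the deterministic bound $V\leq 1+R_0^2$ to get $\bE[V(Y_n(T\wedge\tau_R))]\leq(1+R_0^2+C'T)e^{4\kappa T}$, and conclude $\PR(\tau_R\leq T)\leq C(R_0,T)/R^2$ from $V(Y_n(\tau_R))\geq 1+(R-1)^2$ on $\{\tau_R\leq T\}$. With that repair (or by switching to the paper's bounded $\tilde f$), the proof is complete and coincides with the paper's route.
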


The proof of Theorem~\ref{theorem:CLT_CW_critical_temperature_rescaling} is a simple adaptation of the proofs of Theorems~\ref{theorem:CLT_1d_arbitrarypotential} and \ref{theorem:moderate_deviations_CW_critical_temperature_rescaling} and therefore is omitted.\\
The results in the present section, together with the large deviation principle in \cite[Theorem 1]{Kr16b} and the study of fluctuations at $\beta=1$ in \cite[Theorem 2.10]{CoDaP12}, give a complete picture of the behaviour of fluctuations for the Curie-Weiss model. Indeed all the possible scales are covered. We summarize our findings in Table~\ref{tab:CW:deviations}. For completeness, we give also the Hamiltonian of the large deviation principle for the dynamics of $m_n(t)$ around its limiting trajectory \eqref{CW:macro:dyn}: 
\begin{equation} \label{eqn:Ham_LDP}
\begin{aligned}
H(x,p) & = \frac{1-x}{2} e^{\beta x} \left[e^{2p} - 1\right] + \frac{1+x}{2} e^{-\beta x}\left[e^{-2p} - 1\right] \\
& = \left[\cosh(2p) -1 \right]G_{1,\beta}(x) + \sinh(2p) G_{2,\beta}(x).
\end{aligned}
\end{equation}
The displayed conclusions are drawn under the assumption that in each case either the initial condition satisfies a large deviation principle at the correct speed or the initial measure converges weakly.\\

\begin{table}[h!]
\caption{Fluctuations for the empirical magnetization of the Curie-Weiss spin-flip dynamics}
\begin{center}
\begin{tabular}{|c|c|c|c|}
\hline
\rowcolor{Gray}
\parbox[c][1cm][c]{1.5cm}{\scshape \footnotesize \centering Scaling Exponent} & \parbox[c][1cm][c]{2.5cm}{\scshape \footnotesize \centering Temperature} & \parbox[c][1cm][c]{2.8cm}{\scshape \footnotesize \centering Rescaled Process} & \parbox[c][1cm][c]{4cm}{\scshape \footnotesize \centering Limiting Theorem}\\
\hline \hline
$\alpha = 0$ & all $\beta$ & $m_n(t)$ & \parbox[c][1.3cm][c]{4cm}{\footnotesize \centering LDP at speed $n$ with Hamiltonian as in \eqref{eqn:Ham_LDP} \\ (see \cite{Co89,Kr16b})}\\
\hline\hline
\multicolumn{4}{|c|}{{\bf \scriptsize \cellcolor{LightGray} NON-CRITICAL CASES}}\\
\hline\hline
\multirow{3}{*}{$\alpha \in \left( 0, \frac{1}{2} \right)$} & all $\beta$ & $n^\alpha m_n(t)$ & \parbox[c][1.5cm][c]{4cm}{\footnotesize \centering LDP at speed $n^{1-2\alpha}$ with rate function \eqref{CW:0MD:RF}}\\
 & $\beta > 1$ & $n^\alpha (m_n(t) \pm m_\beta)$ &  \parbox[c][1.5cm][c]{4cm}{\footnotesize \centering LDP at speed $n^{1-2\alpha}$ with rate function \eqref{CW:supercriticalMD:RF}} \\
 \hline
 \multirow{3}{*}{$\alpha = \frac{1}{2}$} & all $\beta$ & $n^{1/2} m_n(t)$ & \parbox[c][2.7cm][c]{4cm}{\footnotesize \centering CLT \\ weak convergence to the unique solution of {\scriptsize \[ dY(t) = 2(\beta-1) Y(t) \dd t + 2 \dd W(t) \] (see \cite{CoDaP12})}}\\
 & $\beta > 1$ & $n^{1/2} (m_n(t) \pm m_\beta)$ &  \parbox[c][1.5cm][c]{4cm}{\footnotesize \centering CLT \\ weak convergence to the unique solution of \eqref{CLT_CW_supercritical}} \\ 
\hline\hline
\multicolumn{4}{|c|}{{\bf \scriptsize \cellcolor{LightGray} CRITICAL CASES}}\\
\hline \hline
 \multirow{3}{*}{$\alpha \in \left( 0, \frac{1}{4} \right)$} & $\beta = 1$ & $ n^{\alpha} m_n \left( n^{2\alpha} t \right)$ & \parbox[c][1.3cm][c]{4cm}{\footnotesize \centering LDP at speed $n^{1-4\alpha}$ with rate function \eqref{CW:criticalMD:RF}}\\
 & \parbox{2.5cm}{\centering $\beta = 1 + \kappa n^{-2\alpha}$ \mbox{ \footnotesize (with $\kappa \geq 0$)}} & $n^\alpha m_n \left( n^{2\alpha}t \right)$ &  \parbox[c][1.3cm][c]{4cm}{\footnotesize \centering LDP at speed $n^{1-4\alpha}$ with rate function \eqref{CW:criticalMD:temp_resc:RF}} \\
 \hline
 \multirow{3}{*}{$\alpha = \frac{1}{4}$} & $\beta = 1$ & $ n^{1/4} m_n \left( n^{1/2} t \right)$ & \parbox[c][2.3cm][c]{4cm}{\footnotesize \centering weak convergence to the unique solution of {\scriptsize \[ dY(t) = -\frac{2}{3} Y(t)^3 \dd t + 2 \dd W(t) \] (see \cite{CoDaP12})}}\\
 & \parbox{2.5cm}{\centering $\beta = 1 + \kappa n^{-1/2}$ \mbox{ \footnotesize (with $\kappa \geq 0$)}} & $n^{1/4} m_n \left( n^{1/2}t \right)$ &  \parbox[c][1.3cm][c]{4cm}{\footnotesize \centering weak convergence to the unique solution of \eqref{CLT_CW_critical_temperature_rescaling}} \\
\hline
\end{tabular}
\end{center}
\label{tab:CW:deviations}
\end{table}%
\section{Proofs}\label{sct:proofs}

\subsection{Strategy of the proof} \label{section:strategy_of_proof}

We will analyze large/moderate deviations behaviour following the Feng-Kurtz approach to large deviations \cite{FK06}. This method is based on three observations:
\begin{enumerate}
\item If the processes are exponentially tight, it suffices to establish the large deviation principle for finite dimensional distributions.
\item The large deviation principle for finite dimensional distributions can be established by proving that the semigroup of log Laplace-transforms of the conditional probabilities converges to a limiting semigroup.
\item One can often rewrite the limiting semigroup as a variational semigroup, which allows to rewrite the rate-function on the Skorohod space in Lagrangian form.
\end{enumerate}

The strategy to prove a large deviation principle with speed $r(n)$ for a sequence of Markov processes $\{X_n\}_{n \geq 1}$, having generators $\{ A_n \}_{n \geq 1}$, formally works as follows:

\begin{enumerate}
\item 
\emph{Identification of a limiting Hamiltonian $H$.}
The semigroups of log-Laplace transforms of the conditional probabilities
\begin{equation*}
V_n(t)f(x) = \frac{1}{r(n)} \log \bE\left[e^{r(n)f(X_n(t))} \, \middle| X_n(0) = x \right]
\end{equation*}
formally have generators $H_nf = r(n)^{-1}e^{-r(n)f}A_n e^{r(n)f}$. Then one verifies that the sequence $\{H_n\}_{n \geq 1}$ converges to a limiting operator $H$; i.e. one shows that, for any  $f \in \mathcal{D}(H)$, there exists $f_n \in \mathcal{D}(H_n)$ such that $f_n \to f$ and $H_n f_n \to Hf$, as $n \to \infty$. 
\item \emph{Exponential tightness.} Provided one can verify the exponential compact containment condition, the convergence of the sequence $\{H_n\}_{n \geq 1}$ gives exponential tightness. 
\item 
\emph{Verification of a comparison principle.} The theory of viscosity solutions gives applicable conditions for proving that the limiting Hamiltonian generates a semigroup. If for all $\lambda > 0$ and bounded continuous functions $h$, the Hamilton-Jacobi equation $f - \lambda H f = h$ admits a unique solution, one can extend the generator $H$ so that the extension satisfies the conditions of Crandall-Liggett theorem and thus generates a semigroup $V(t)$. Additionally, it follows that the semigroups $V_n(t)$ converge to $V(t)$, giving the large deviation principle. Uniqueness of the solution of the Hamilton-Jacobi equation can be established via comparison principle for sub- and super-solutions.
\item 
\emph{Variational representation of the limiting semigroup.} By Legendre transforming  the limiting Hamiltonian $H$, one can define a ``Lagrangian'' which can be used to define a variational semigroup and a variational resolvent. It can be shown that the variational resolvent provides a solution of the Hamilton-Jacobi equation and therefore,  by uniqueness of the solution, identifies the resolvent of $H$. As a consequence, an approximation procedure yields that the variational semigroup and the limiting semigroup $V(t)$ agree.  A standard argument is then sufficient to give a Lagrangian form of the path-space rate function.
\end{enumerate}

We refer to Appendix~\ref{sct:app:LDPviaHJequation} for an overview of the derivation of a large deviation principle via solution of Hamilton-Jacobi equation.\\ 
We proceed with the verification of the model-specific conditions needed to apply the results from the appendix. The treatment is carried out in a more abstract way than strictly necessary to single out important arguments that might get lost if all the objects are explicit.  The appendix and the following definitions are written to prove a path-space large deviation principle on $D_E(\bR^+)$, the Skorohod space of paths taking values in the closed set $E \subseteq \mathbb{R}^d$ which is contained in the $\bR^d$-closure of its $\bR^d$-interior. Two types of functions are of importance to this purpose: good penalization and good containment functions.

\begin{definition}
We say that $\{\Psi_\alpha\}_{\alpha >0}$, with $\Psi_\alpha : E^2 \rightarrow \bR$, are \textit{good penalization functions} if  there are extensions of $\Psi_\alpha$ to an open neighbourhood of $E^2$ in $\bR^d \times \bR^d$ (also denoted by $\Psi_\alpha$) so that
\begin{enumerate}[($\Psi$a)]
\item For all $\alpha > 0$, we have $\Psi_\alpha \geq 0$ and $\Psi_\alpha(x,y) = 0$ if and only if $x = y$. Additionally, $\alpha \mapsto \Psi_\alpha$ is increasing and
\begin{equation*}
\lim_{\alpha \rightarrow \infty} \Psi_\alpha(x,y) = \begin{cases}
0 & \text{if } x = y \\
\infty & \text{if } x \neq y.
\end{cases}
\end{equation*}	
\item $\Psi_\alpha$ is twice continuously differentiable in both coordinates for all $\alpha > 0$,
\item $(\nabla \Psi_\alpha(\cdot,y))(x) = - (\nabla \Psi_\alpha(x,\cdot))(y)$ for all $\alpha > 0$.
\end{enumerate}
\end{definition}

\begin{definition}
We say that $\Upsilon : E \rightarrow \bR$ is a \textit{good containment function} (for $H$) if there is an extension of $\Upsilon$ to an open neighbourhood of $E$ in $\bR^d$ (also denoted by $\Upsilon$) so that
\begin{enumerate}[($\Upsilon$a)]
\item $\Upsilon \geq 0$ and there exists a point $x_0 \in E$ such that $\Upsilon(x_0) = 0$,
\item $\Upsilon$ is twice continuously differentiable, 
\item for every $c \geq 0$, the set $\{x \in E \, | \, \Upsilon(x) \leq c\}$ is compact,
\item we have $\sup_{z \in E} H(z,\nabla \Upsilon(z)) < \infty$.
\end{enumerate}
\end{definition}

In the rest of the paper, we will  denote by $C_c^2(E)$the set of functions that are constant outside some compact set in $E$ and twice continuously differentiable on a neighbourhood of $E$ in~$\bR^d$. \\

Let us denote by $E_n$, closed subset of $E \subseteq \mathbb{R}^d$, the set where the finite $n$ process takes values.  Our large or moderate deviation principles will all follow from the application of Theorem \ref{theorem:Abstract_LDP} after having checked the following conditions:
\begin{enumerate}[(a)]
\item For all $f \in C_c^2(E)$ and compact sets $K \subseteq E$, we have 
\begin{equation*}
\lim_{n \rightarrow \infty} \sup_{x \in K \cap E_n} \left|H_n f(x) - Hf(x) \right| = 0.
\end{equation*}
\item There exists a good containment function $\Upsilon$ for $H$.
\item For all $\lambda > 0$ and $h \in C_b(E)$, the comparison principle holds for $f - \lambda Hf = h$. 
\end{enumerate}

Rigorous definitions of Hamilton-Jacobi equation, viscosity solutions and comparison principle will be given in Appendix \ref{sct:app:LDPviaHJequation}.\\

The limiting Hamiltonians we will encounter are all of quadratic type and the space $E$ will always equal $\bR$. The following two known results establish (b) and (c) for Hamiltonians of this type, and are given for completeness. We postpone the verification of condition (a) for our various cases to Subsections \ref{subsct:MDP&CLT:arbitrary:potential} and \ref{subsct:MDP:critical:temp_rescaling}.

\begin{definition}
Let $E \subseteq \mathbb{R}^d$ be a closed set. We say that a vector field $\mathbf{F} : E \rightarrow \bR^d$ is one-sided Lipschitz if there exists a constant $M \geq 0$ such that, for all $x,y \in E$, it holds
\begin{equation*}
\ip{x-y}{\mathbf{F}(x) - \mathbf{F}(y)} \leq  M|x-y|^2.
\end{equation*}
\end{definition}

\begin{lemma}\label{lemma:FW_one_sided_lipschitz_containment_function}
Let $\mathbf{F}: \mathbb{R}^d \rightarrow \bR^d$ be  one-sided Lipschitz, $A$ a positive-definite matrix and $c \geq 0$. Assume $\mathbf{F}(0) = 0$. Consider the Hamiltonian $H$ with domain $C_c^2(\bR^d)$ and of the form $Hf(x) = H(x,\nabla f(x))$, where
\begin{equation*}
H(x,p) = \ip{p}{\mathbf{F}(x)} + c \, \ip{Ap}{p}.
\end{equation*}
Then $\Upsilon(x) =  \log (1+ \frac{1}{2}|x|^2)$ is a good containment function for $H$.
\end{lemma}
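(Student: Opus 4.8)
The goal is to verify the four properties $(\Upsilon a)$--$(\Upsilon d)$ for $\Upsilon(x) = \log(1 + \tfrac12 |x|^2)$ against a Hamiltonian of the form $H(x,p) = \ip{p}{\mathbf{F}(x)} + c\ip{Ap}{p}$ with $\mathbf{F}$ one-sided Lipschitz, $\mathbf{F}(0)=0$, $A$ positive-definite, $c \geq 0$. The first three are essentially immediate, so the substance of the proof is property $(\Upsilon d)$, i.e.\ $\sup_{z} H(z, \nabla\Upsilon(z)) < \infty$, and the one genuine trick there is to exploit $\mathbf{F}(0)=0$ together with one-sided Lipschitzness to bound the (potentially unbounded) drift term $\ip{\nabla\Upsilon(z)}{\mathbf{F}(z)}$.

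First I would record the elementary facts: $\Upsilon \geq 0$ with $\Upsilon(0) = 0$, giving $(\Upsilon a)$ with $x_0 = 0$; $\Upsilon$ is smooth (indeed $C^\infty$) since $1 + \tfrac12|x|^2 \geq 1 > 0$ everywhere, giving $(\Upsilon b)$; and $\{x : \Upsilon(x) \leq c\} = \{x : |x|^2 \leq 2(e^c - 1)\}$ is a closed ball, hence compact, giving $(\Upsilon c)$. Then I would compute the gradient: $\nabla\Upsilon(z) = \dfrac{z}{1 + \tfrac12|z|^2}$, so in particular $|\nabla\Upsilon(z)| = \dfrac{|z|}{1 + \tfrac12|z|^2} \leq \sqrt{2}$ for all $z$ (the maximum of $t/(1+\tfrac12 t^2)$ over $t \geq 0$ is attained at $t = \sqrt 2$). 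This uniform bound on $\nabla\Upsilon$ immediately controls the quadratic term: $c\ip{A\nabla\Upsilon(z)}{\nabla\Upsilon(z)} \leq c \|A\| |\nabla\Upsilon(z)|^2 \leq 2c\|A\|$, a finite constant independent of $z$.

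For the drift term, apply the one-sided Lipschitz inequality with $y = 0$: since $\mathbf{F}(0) = 0$ we get $\ip{z}{\mathbf{F}(z)} = \ip{z - 0}{\mathbf{F}(z) - \mathbf{F}(0)} \leq M|z|^2$. Dividing by $1 + \tfrac12|z|^2$ yields
\begin{equation*}
\ip{\nabla\Upsilon(z)}{\mathbf{F}(z)} = \frac{\ip{z}{\mathbf{F}(z)}}{1 + \tfrac12|z|^2} \leq \frac{M|z|^2}{1 + \tfrac12|z|^2} \leq 2M,
\end{equation*}
again uniformly in $z$. Combining, $\sup_z H(z,\nabla\Upsilon(z)) \leq 2M + 2c\|A\| < \infty$, which is $(\Upsilon d)$, and the proof is complete.

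The only thing requiring any thought is the realization that the drift term must be handled via the one-sided Lipschitz bound anchored at the zero of $\mathbf{F}$, rather than by trying to bound $|\mathbf{F}(z)|$ directly (which could grow like $|z|$ under one-sided Lipschitzness and would only give a bound of order $|z| \cdot |\nabla\Upsilon(z)| = O(1)$ anyway — though the division by $1+\tfrac12|z|^2$ is what really tames it). There is no serious obstacle; the logarithmic choice of $\Upsilon$ is engineered precisely so that $|\nabla\Upsilon|$ stays bounded while the sublevel sets remain compact, and these two features do all the work.
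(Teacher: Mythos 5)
Your proposal is correct and follows essentially the same route as the paper: compute $\nabla\Upsilon(x) = x/(1+\tfrac12|x|^2)$, bound the drift term via the one-sided Lipschitz estimate $\ip{x}{\mathbf{F}(x)} = \ip{x-0}{\mathbf{F}(x)-\mathbf{F}(0)} \leq M|x|^2$, and control the quadratic term with $\ip{Ax}{x}\leq \|A\||x|^2$; the only cosmetic difference is that you first bound $|\nabla\Upsilon|$ uniformly (the maximum of $t/(1+\tfrac12 t^2)$ is in fact $1/\sqrt{2}$, attained at $t=\sqrt{2}$, so your bound of $\sqrt{2}$ holds a fortiori) while the paper bounds the ratio $|x|^2/(1+\tfrac12|x|^2)^2$ directly.
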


\begin{proof}
The proof is based on a simplification of Example 4.23 in \cite{FK06}. Since the $i$-th component of the gradient of $\Upsilon$ is given by 
\begin{equation*}
(\nabla \Upsilon(x))_i = \frac{x_i}{1+\frac{1}{2}|x|^2},
\end{equation*}
we obtain
\[
H(x,\nabla \Upsilon(x)) =  \frac{\ip{x}{\mathbf{F}(x)}}{1+\frac{1}{2}|x|^2} + \frac{c \ip{Ax}{x}}{(1+\frac{1}{2}|x|^2)^2} .
\]

Notice that, by the one-sided Lipschitz property of $\mathbf{F}$, there exists a constant $M$ such that $\ip{x}{\mathbf{F}(x)} = \ip{x- 0}{\mathbf{F}(x) - \mathbf{F}(0)} \leq M |x|^2$. Moreover, by Cauchy-Schwarz inequality, we have $\ip{Ax}{x} \leq |Ax||x| \leq \|A\| |x|^2$, where $\|A\| := \sup_{|x|=1} \, |Ax|$. Therefore, we get the estimate
\[
H(x,\nabla \Upsilon(x)) \leq \frac{M |x|^2 }{1+\frac{1}{2}|x|^2} + \frac{c \|A\|  |x|^2}{\left( 1+\frac{1}{2}|x|^2 \right)^2} \leq 4 (M +c \|A\|),
\]
which gives $\sup_x H(x,\nabla \Upsilon(x)) < \infty$, implying that $\Upsilon$ is a good containment function.
\end{proof}

\begin{proposition} \label{proposition:FW_one_sided_lipschitz_comparison_principle}
Let $\mathbf{F}: \mathbb{R}^d \rightarrow \bR^d$ be  one-sided Lipschitz, $A$ a positive-definite matrix and $c \in \mathbb{R}$. Assume $\mathbf{F}(0) = 0$. Consider the Hamiltonian $H$ with domain $C_c^2(\bR^d)$ and of the form $Hf(x) = H(x,\nabla f(x))$, where
\begin{equation*}
H(x,p) = \ip{p}{\mathbf{F}(x)} + c \, \ip{Ap}{p}.
\end{equation*}
Then, for every $\lambda > 0$ and $h \in C_b(\bR^d)$, the comparison principle is satisfied for $f - \lambda H f = h$.
\end{proposition}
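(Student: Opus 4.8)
The statement is a comparison principle for the first–order Hamilton–Jacobi equation $f - \lambda H f = h$, and the natural route is the doubling–of–variables technique of viscosity solution theory, localized to the non-compact space $\bR^d$ by means of a containment function. So let $u$ be a bounded upper semicontinuous subsolution and $v$ a bounded lower semicontinuous supersolution of $f-\lambda Hf = h$; the goal is to prove $\sup_x(u(x)-v(x)) \le 0$.

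As penalization functions I would take $\Psi_\alpha(x,y) = \tfrac{\alpha}{2}|x-y|^2$, which satisfy ($\Psi$a)--($\Psi$c) with $\nabla_x\Psi_\alpha(x,y) = \alpha(x-y) = -\nabla_y\Psi_\alpha(x,y)$, and as containment function $\Upsilon(x) = \log(1+\tfrac12|x|^2)$ from Lemma~\ref{lemma:FW_one_sided_lipschitz_containment_function} --- whose proof is in fact valid for every $c\in\bR$, since when $c<0$ the term $c\,\ip{Ax}{x}/(1+\tfrac12|x|^2)^2$ is non-positive and the bound $H(x,\nabla\Upsilon(x))\le 2M$ still holds. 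For $\alpha,\epsilon>0$ one then maximizes $(x,y)\mapsto u(x)-v(y)-\Psi_\alpha(x,y)-\epsilon(\Upsilon(x)+\Upsilon(y))$; a maximizer $(x_{\alpha,\epsilon},y_{\alpha,\epsilon})$ exists because the $\epsilon\Upsilon$-terms push the supremum into a compact set via ($\Upsilon$c). The standard penalization estimates then give, for fixed $\epsilon$, that $\alpha|x_{\alpha,\epsilon}-y_{\alpha,\epsilon}|^2\to 0$ and that $x_{\alpha,\epsilon},y_{\alpha,\epsilon}$ converge (along a subsequence) to a common point as $\alpha\to\infty$, while $\sup_x(u-v)(x)\le \liminf_{\epsilon\to0}\liminf_{\alpha\to\infty}\big(u(x_{\alpha,\epsilon})-v(y_{\alpha,\epsilon})\big)$.

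Applying the sub- and supersolution properties at $x_{\alpha,\epsilon}$ and $y_{\alpha,\epsilon}$ with the test functions $\Psi_\alpha(\cdot,y_{\alpha,\epsilon})+\epsilon\Upsilon$ and $-\Psi_\alpha(x_{\alpha,\epsilon},\cdot)-\epsilon\Upsilon$ respectively, and subtracting, yields with $p := \alpha(x_{\alpha,\epsilon}-y_{\alpha,\epsilon})$
\begin{equation*}
u(x_{\alpha,\epsilon}) - v(y_{\alpha,\epsilon}) \le h(x_{\alpha,\epsilon}) - h(y_{\alpha,\epsilon}) + \lambda\Big[H\big(x_{\alpha,\epsilon},p+\epsilon\nabla\Upsilon(x_{\alpha,\epsilon})\big) - H\big(y_{\alpha,\epsilon},p-\epsilon\nabla\Upsilon(y_{\alpha,\epsilon})\big)\Big].
\end{equation*}
Expanding the bracket with the explicit quadratic form of $H$, the decisive algebraic fact is that the diagonal terms $c\,\ip{Ap}{p}$ cancel exactly. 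What survives is: (i) the transport term $\ip{p}{\mathbf F(x_{\alpha,\epsilon})-\mathbf F(y_{\alpha,\epsilon})} = \alpha\ip{x_{\alpha,\epsilon}-y_{\alpha,\epsilon}}{\mathbf F(x_{\alpha,\epsilon})-\mathbf F(y_{\alpha,\epsilon})}\le M\,\alpha|x_{\alpha,\epsilon}-y_{\alpha,\epsilon}|^2\to 0$ by one-sided Lipschitzness; (ii) the terms $\epsilon\ip{\nabla\Upsilon(x_{\alpha,\epsilon})}{\mathbf F(x_{\alpha,\epsilon})}+\epsilon\ip{\nabla\Upsilon(y_{\alpha,\epsilon})}{\mathbf F(y_{\alpha,\epsilon})}\le 4M\epsilon$ (one-sided Lipschitzness relative to $\mathbf F(0)=0$, exactly as in the proof of Lemma~\ref{lemma:FW_one_sided_lipschitz_containment_function}) and an $\epsilon^2$-remainder bounded using $|\nabla\Upsilon|\le 1$; and (iii) the cross term $2c\epsilon\,\ip{Ap}{\nabla\Upsilon(x_{\alpha,\epsilon})+\nabla\Upsilon(y_{\alpha,\epsilon})}$. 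Since $h$ is continuous and $x_{\alpha,\epsilon}-y_{\alpha,\epsilon}\to 0$, letting $\alpha\to\infty$ and then $\epsilon\to 0$ collapses (i), (ii) and --- modulo the obstacle below --- (iii), delivering $\sup_x(u-v)(x)\le 0$.

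The technical heart is term (iii): it is linear in the penalization momentum $p=\alpha(x_{\alpha,\epsilon}-y_{\alpha,\epsilon})$, whose norm is not controlled by $\alpha|x_{\alpha,\epsilon}-y_{\alpha,\epsilon}|^2\to 0$ alone, and I expect this to be the only real difficulty. One way around it is to read off an a priori bound on $|p|$ from the viscosity inequality itself: when $c>0$, the supersolution estimate $H(y_{\alpha,\epsilon},p-\epsilon\nabla\Upsilon(y_{\alpha,\epsilon}))\le \lambda^{-1}(\|v\|_\infty+\|h\|_\infty)$ together with the positive-definiteness of $A$ (so $\ip{Aq}{q}\ge c_A|q|^2$ for some $c_A>0$) forces $|p|$ to stay bounded in terms of $|\mathbf F|$ on a compact neighbourhood of the limit points, which makes (iii) of order $\epsilon$; when $c<0$ one argues symmetrically from the subsolution side, and $c=0$ gives a linear equation in which (iii) is absent. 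Alternatively --- and this is presumably the cleanest packaging --- one simply verifies the hypotheses of the abstract comparison principle underlying the Feng--Kurtz scheme \cite{FK06} (cf.\ Appendix~\ref{sct:app:LDPviaHJequation}), whose conditions for quadratic Hamiltonians amount precisely to the one-sided Lipschitz bound exhibited in step (i) together with the existence of the containment function. Everything outside (iii) is the standard doubling template.
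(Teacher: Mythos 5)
Your proposal is correct in substance, but it takes the ``hands-on'' doubling route where the paper instead delegates everything to the abstract Proposition~\ref{proposition:comparison_conditions_on_H} --- which is in fact the alternative you flag at the very end as the ``cleanest packaging''. The paper's proof is three lines: take $\Psi_\alpha(x,y)=\tfrac{\alpha}{2}|x-y|^2$ and the containment function of Lemma~\ref{lemma:FW_one_sided_lipschitz_containment_function}, and check condition \eqref{condH:negative:liminf}, i.e.\ $H(x_\alpha,\alpha(x_\alpha-y_\alpha))-H(y_\alpha,\alpha(x_\alpha-y_\alpha))\le M\Psi_\alpha(x_\alpha,y_\alpha)\to 0$ by one-sided Lipschitzness and Lemma~\ref{lemma:doubling_lemma}(c). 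The point is that in \eqref{condH:negative:liminf} both Hamiltonians are evaluated at the \emph{same, unperturbed} momentum $\alpha(x_\alpha-y_\alpha)$, so the quadratic term $c\,\ip{Ap}{p}$ cancels identically and your term (iii) never arises: the $\epsilon\nabla\Upsilon$ contributions have already been split off inside Proposition~\ref{proposition:comparison_conditions_on_H} by convexity of $p\mapsto H(x,p)$, where they appear only as the uniformly bounded quantities $\epsilon H(\cdot,\nabla\Upsilon(\cdot))$ controlled by ($\Upsilon$d). What your direct expansion buys in exchange for the extra work is exactly the a priori bound on $|p|$ that you correctly identify as necessary on that route (and your derivation of it from the sub-/supersolution inequalities and positive-definiteness of $A$ is sound); it also handles $c<0$ by arguing from the subsolution side, whereas the convexity step in Proposition~\ref{proposition:comparison_conditions_on_H} as written presumes $p\mapsto H(x,p)$ convex, i.e.\ effectively $c\ge 0$ (the only case used in the paper). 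So: same skeleton (penalization plus containment function, one-sided Lipschitz doing the real work), but the paper's factorization through the abstract proposition makes the quadratic part of $H$ entirely harmless, while your version confronts --- and correctly resolves --- a cross term that the paper's bookkeeping is designed to avoid.
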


\begin{proof}
We apply Proposition \ref{proposition:comparison_conditions_on_H}. We have to check \eqref{condH:negative:liminf}. We use the good containment function introduced in Lemma \ref{lemma:FW_one_sided_lipschitz_containment_function} and the collection of good penalization functions $\{\Psi_\alpha \}_{\alpha>0}$, with $\Psi_\alpha(x,y) = \frac{\alpha}{2}|x-y|^2$. We fix $\varepsilon > 0$ and write $x_\alpha, y_\alpha$ instead of $x_{\alpha,\varepsilon},y_{\alpha,\varepsilon}$ to lighten the notation. As the term $\ip{Ap}{p}$ does not depend on $x$ or $y$, we have

\begin{align*}
&H(x_\alpha, \alpha(x_\alpha - y_\alpha)) - H(y_\alpha, \alpha(x_\alpha - y_\alpha)) \\
&\qquad \qquad  = \ip{\alpha(x_\alpha - y_\alpha)}{\mathbf{F}(x_\alpha)} -\ip{\alpha(x_\alpha - y_\alpha)}{\mathbf{F}(y_\alpha)} \leq  M \Psi_{\alpha}(x_\alpha,y_\alpha).
\end{align*}
By Lemma \ref{lemma:doubling_lemma}, we obtain $\lim_{\alpha \to \infty} \Psi_{\alpha}(x_\alpha,y_\alpha) = 0$ and the conclusion follows.

\end{proof}

\begin{remark}
Lemma~\ref{lemma:FW_one_sided_lipschitz_containment_function} and Proposition~\ref{proposition:FW_one_sided_lipschitz_comparison_principle} can be suitably adapted (by centering~$\Upsilon$) to deal with the case when $\mathbf{F}(x_s) = 0$ at some $x_s \neq 0$.
\end{remark}

\subsection{Proof of Theorems~\ref{theorem:moderate_deviations_CW_subcritical}, \ref{theorem:moderate_deviations_CW_critical}, \ref{theorem:moderate_deviations_CW_supercritical} and \ref{theorem:CLT_CW_supercritical}}
\label{subsct:MDP&CLT:arbitrary:potential}

We introduce a general version of dynamics \`a la Curie-Weiss where the evolution of the magnetization is driven by a sufficiently smooth arbitrary potential. We characterize moderate deviations and central limit theorem for this generalization getting then as corollaries the statements of Theorems \ref{theorem:moderate_deviations_CW_subcritical}--\ref{theorem:CLT_CW_supercritical}.\\

Let $U$ be a continuously differentiable potential and consider the dynamics such that the empirical magnetization $\{m_n(t)\}_{t \geq 0}$ is a Markov process on $E_n$, with generator
\begin{multline}\label{eqn:CWgenerator_arbitrarypotential}
\cA_nf(x) = \frac{n(1-x)}{2} \, e^{U'(x)} \left[f\left(x + \frac{2}{n}\right) - f(x)\right] \\
+ \frac{n(1+x)}{2} \, e^{-U'(x)} \left[f\left(x - \frac{2}{n}\right) - f(x)\right].
\end{multline}
The infinite volume dynamics corresponding to the Markov process \eqref{eqn:CWgenerator_arbitrarypotential} may be derived from the large deviation principle in \cite[Theorem 1]{Kr16b}. In particular, the stationary points for the limiting dynamics are the solutions of $G_2(x)=0$, where
\[
G_{2}(x) := \sinh(U'(x)) - x \cosh(U'(x)).
\]
For later convenience we define also $G_{1}(x) := \cosh(U'(x)) - x \sinh(U'(x))$. In this setting, we have a moderate deviation principle and a weak convergence result.

\begin{theorem}[Moderate deviations, arbitrary potential and stationary point] \label{theorem:mdp_1d_arbitrarypotential}
Let $m$ be a solution of $G_2(x) = 0$. Let $k \in \bN \cup \{0\}$ and suppose that $U'$ is $2k+1$ times continuously differentiable. Additionally, suppose that
\begin{enumerate}[(a)]
\item $G_2^{(l)}(m) = 0$ for $l \leq 2k$,
\item if $k > 0$, then $G_2^{(2k+1)}(m) \leq 0$.
\end{enumerate}
Let $\{b_n\}_{n\geq 1}$ be a sequence of positive real numbers such that
\begin{equation*}
b_n \rightarrow \infty, \qquad \frac{b_n^{2(k+1)}}{n} \rightarrow 0.
\end{equation*}

Suppose that $b_n (m_n(0) - m)$ satisfies the large deviation principle with speed $n b_n^{-2(k+1)}$ on $\bR$ with rate function $I_0$. Then the trajectories $\left\{b_n(m_n(b_n^{2k} t) - m)\right\}_{t \geq 0}$ satisfy the large deviation principle on $D_\bR(\bR^+)$:
\begin{equation*}
\PR\left[\left\{b_n(m_n(b_n^{2k} t) - m)\right\}_{t \geq 0} \approx \{\gamma(t)\}_{t \geq 0}  \right] \sim e^{-n b_n^{-2(k+1)}I(\gamma)},
\end{equation*}
where $I$ is the good rate function
\begin{equation*}
I(\gamma) = \begin{cases}
I_0(\gamma(0)) + \int_0^\infty \cL(\gamma(s),\dot{\gamma}(s)) \dd s & \text{if } \gamma \in \cA\cC, \\
\infty & \text{otherwise},
\end{cases}
\end{equation*}
and 
\begin{equation*}
\cL(x,v) =  \frac{\left(v-\frac{2x^{2k+1}}{(2k+1)!}G_{2}^{(2k+1)}(m) \right)^2}{8 G_{1}(m)}.
\end{equation*}
\end{theorem}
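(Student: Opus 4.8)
The plan is to apply the abstract machinery of Theorem~\ref{theorem:Abstract_LDP} from the appendix, so the work reduces to verifying conditions (a), (b), (c) for an appropriately rescaled process. First I would fix the rescaled fluctuation process $Y_n(t) := b_n(m_n(b_n^{2k}t) - m)$ on the state space $E_n^Y := \{b_n(x-m) : x \in E_n\}$, and compute its generator: conjugating the time change $t \mapsto b_n^{2k}t$ multiplies $\cA_n$ by $b_n^{2k}$, and the spatial rescaling $x = m + b_n^{-1}y$ turns the jumps of size $\pm 2/n$ in $x$ into jumps of size $\pm 2b_n/n$ in $y$. The speed is $r(n) = n b_n^{-2(k+1)}$, so I would form $H_n f(y) = r(n)^{-1} e^{-r(n)f} A_n^Y e^{r(n)f}(y)$ for $f \in C_c^2(\bR)$. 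Writing $f(y \pm 2b_n/n) - f(y) \approx \pm (2b_n/n) f'(y) + \tfrac12 (2b_n/n)^2 f''(y)$ and $e^{r(n)[f(y\pm 2b_n/n) - f(y)]} \approx e^{\pm 2f'(y)}(1 + \text{lower order})$, the prefactor $b_n^{2k} \cdot \tfrac{n(1 \mp (m + b_n^{-1}y))}{2} e^{\pm U'(m+b_n^{-1}y)}$ needs to be Taylor-expanded in $b_n^{-1}y$; the rates combine into the $G_1, G_2$ functions exactly as in \eqref{eqn:Ham_LDP}. The crucial point is that assumptions (a)--(b) force the first $2k$ derivatives of $G_2$ at $m$ to vanish, so the leading surviving contribution of the drift term is $\frac{2 y^{2k+1}}{(2k+1)!} G_2^{(2k+1)}(m)$, which is exactly balanced against the time-rescaling factor $b_n^{2k}$ and the spatial factor $b_n^{-(2k+1)}$ coming from $b_n \cdot b_n^{2k} \cdot b_n^{-(2k+1)} \cdot$ (rate $\sim n$); similarly $G_1(m + b_n^{-1}y) \to G_1(m)$ gives the diffusive term $2 G_1(m) (f'(y))^2$. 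Thus I expect to obtain, uniformly on compacts,
\begin{equation*}
\lim_{n \to \infty} \sup_{y \in K \cap E_n^Y} \left| H_n f(y) - H f(y) \right| = 0, \qquad H f(y) = \frac{2 y^{2k+1}}{(2k+1)!} G_2^{(2k+1)}(m)\, f'(y) + 2 G_1(m)\, (f'(y))^2,
\end{equation*}
which is condition (a). Here I must be careful that the error terms involving higher powers of $b_n/n$ and $b_n^2/n$ vanish; the hypothesis $b_n^{2(k+1)}/n \to 0$ is precisely what kills the $r(n)$-dependent corrections to the exponentials and the $O((b_n/n)^3)$ Taylor remainders after multiplication by $n b_n^{2k}$.

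Next, to get (b) and (c) I would recognize $H$ as being of the form covered by Lemma~\ref{lemma:FW_one_sided_lipschitz_containment_function} and Proposition~\ref{proposition:FW_one_sided_lipschitz_comparison_principle}, with $\mathbf{F}(y) = \frac{2 y^{2k+1}}{(2k+1)!} G_2^{(2k+1)}(m)$, $A = 1$, $c = 2 G_1(m) > 0$ (note $G_1(m) > 0$ since at a stationary point $G_1(m) = \cosh(U'(m)) - m \sinh(U'(m))$ and $m = \tanh(U'(m))$ give $G_1(m) = \mathrm{sech}(U'(m)) > 0$). The drift $\mathbf{F}$ vanishes at $0$ (the shifted stationary point) and is one-sided Lipschitz: by assumption (b), $G_2^{(2k+1)}(m) \leq 0$, so $\ip{y - y'}{\mathbf{F}(y) - \mathbf{F}(y')}$ has the sign of $G_2^{(2k+1)}(m) \cdot (y - y')(y^{2k+1} - (y')^{2k+1}) \leq 0$ because $y \mapsto y^{2k+1}$ is increasing — hence the one-sided Lipschitz constant is $M = 0$ (or for $k = 0$, $\mathbf{F}$ is linear and trivially one-sided Lipschitz). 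Lemma~\ref{lemma:FW_one_sided_lipschitz_containment_function} then gives the good containment function $\Upsilon(y) = \log(1 + \tfrac12 y^2)$ and Proposition~\ref{proposition:FW_one_sided_lipschitz_comparison_principle} gives the comparison principle, settling (b) and (c).

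Finally, with (a)--(c) in hand, Theorem~\ref{theorem:Abstract_LDP} yields the path-space large deviation principle for $\{Y_n(t)\}_{t \geq 0}$ with rate function in Lagrangian form $I(\gamma) = I_0(\gamma(0)) + \int_0^\infty \cL(\gamma(s), \dot\gamma(s))\,\dd s$ for $\gamma \in \cA\cC$ (and $\infty$ otherwise), where $\cL$ is the Legendre transform of $H(y, \cdot)$; since $H(y,p) = \mathbf{F}(y) p + 2 G_1(m) p^2$ is quadratic in $p$, a one-line computation gives $\cL(y,v) = \sup_p\{pv - H(y,p)\} = \frac{(v - \mathbf{F}(y))^2}{8 G_1(m)}$, which is exactly the claimed formula. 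The three corollaries (Theorems~\ref{theorem:moderate_deviations_CW_subcritical}, \ref{theorem:moderate_deviations_CW_critical}, \ref{theorem:moderate_deviations_CW_supercritical}) follow by specializing $U'(x) = \beta x$: for $\beta \leq 1$ one takes $m = 0$, $k = 0$, $G_1(0) = 1$, $G_2'(0) = \beta - 1$; for $\beta = 1$ one takes $m = 0$, $k = 1$ (since $G_2'(0) = G_2''(0) = 0$ but $G_2'''(0) = -2 < 0$), $G_1(0) = 1$; and for $\beta > 1$, $m = \pm m_\beta$, $k = 0$. The main obstacle I anticipate is the bookkeeping in the generator expansion of step (a): tracking all powers of $b_n$, $b_n/n$ and $r(n) = n b_n^{-2(k+1)}$ through the nested Taylor expansions of both the exponential rate factors and the test function, verifying that every term other than the drift and diffusion survivors vanishes uniformly on compacts under the scaling hypothesis, and handling the fact that the extra factors $(1 \mp (m + b_n^{-1}y))$ and $e^{\pm U'(m + b_n^{-1}y)}$ must themselves be expanded to order $2k+1$ to correctly recover $G_2^{(2k+1)}(m)$ from the combination of the $+$ and $-$ rate contributions.
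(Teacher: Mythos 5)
Your proposal is correct and follows essentially the same route as the paper: conjugate and Taylor-expand the generator of the rescaled process to identify the limiting quadratic Hamiltonian $H(y,p)=\frac{2y^{2k+1}}{(2k+1)!}G_2^{(2k+1)}(m)\,p+2G_1(m)p^2$, then invoke Lemma~\ref{lemma:FW_one_sided_lipschitz_containment_function}, Proposition~\ref{proposition:FW_one_sided_lipschitz_comparison_principle} and Theorem~\ref{theorem:Abstract_LDP}, with the Lagrangian obtained by Legendre transform. Your explicit verifications that $G_1(m)>0$ and that hypothesis (b) yields the one-sided Lipschitz property of the drift are details the paper only asserts, and they are correct.
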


\begin{proof}

The generator $A_n$ of the process $b_n(m_n(b_n^{2k}t) - m)$ can be deduced from \eqref{eqn:CWgenerator_arbitrarypotential} and is given by
\begin{align*}
A_n f (x) &= b_n^{2k} n \frac{1-m - xb_n^{-1}}{2} e^{U'(m + xb_n^{-1})} \left[f\left(x + 2b_n n^{-1}\right) - f(x)\right] \\
&+ b_n^{2k} n \frac{1 + m +xb_n^{-1}}{2} e^{- U'(m + xb_n^{-1})} \left[ f\left(x - 2b_n n^{-1}\right) - f(x) \right].
\end{align*} 
Therefore the Hamiltonian 
\begin{equation*}
H_nf = b_n^{2(k+1)}n^{-1} e^{-n b_n^{-2(k+1)}f} A_n e^{n b_n^{-2(k+1)}f},
\end{equation*}
results in
\begin{multline*}
H_nf(x) = b_n^{4k+2} \frac{1-m - xb_n^{-1}}{2} e^{U'(m + xb_n^{-1})} \left[e^{n b_n^{-2(k+1)}\left(f\left(x + 2b_n n^{-1}\right) - f(x)\right)}-1\right] \\
+ b_n^{4k+2} \frac{1 + m +xb_n^{-1}}{2} e^{- U'(m + xb_n^{-1})} \left[e^{n b_n^{-2(k+1)}\left(f\left(x - 2b_n n^{-1}\right) - f(x)\right)}-1\right].
\end{multline*}

We now prove the convergence of the sequence $H_nf$ for $f \in C_c^2(\bR)$. To compensate for the $b_n^{4k+2}$ up front, we Taylor expand the exponential containing $f$ up terms of $O(b_n^{-4k-2})$: 
\begin{multline*}
\exp\left\{n b_n^{-2(k+1)}\left(f(x \pm 2b_n n^{-1}) -f(x)\right)\right\} -1 \\
= \pm 2 b_n^{-2k -1} f'(x) + 2 b_n^{-4k-2}(f'(x))^2 + o(b_n^{-4k-2}).
\end{multline*}
Observe that the above expansion holds since $b_n^{-2k}n^{-1} = o(b_n^{-4k-2})$ by hypothesis. Thus, combining the terms with $f'$ and the terms with $(f')^2$, we find that
\begin{equation*}
H_nf(x) = 2 b_n^{2k +1} G_{2}(m + x b_n^{-1}) f'(x) + 2 G_{1}(m + x b_n^{-1}) (f'(x))^2 + o(1).
\end{equation*}
Next, we Taylor expand $G_1,G_2$ around $m$. For $G_1$ it is clear that only the zero'th order term remains. For $G_2$, we use that the first $2k$ terms disappear. This gives
\begin{align*}
H_nf(x) & = \frac{2x^{2k+1}}{(2k+1)!} G_{2}^{(2k+1)}(m) f'(x) + 2 G_{1}(m) (f'(x))^2 + o(1),
\end{align*}
where the $o(1)$ is uniform on compact sets. Thus, for $f \in C_c^2(\bR)$, $H_nf$ converges uniformly to $Hf(x) = H(x,f'(x))$ where 
\begin{equation*}
H(x,p) = \frac{2x^{2k+1}}{(2k+1)!}G_{2}^{(2k+1)}(m) p + 2 G_{1}(m) p^2.
\end{equation*}
The large deviation result follows by Theorem \ref{theorem:Abstract_LDP}, Lemma \ref{lemma:FW_one_sided_lipschitz_containment_function} and Proposition \ref{proposition:FW_one_sided_lipschitz_comparison_principle}. Note that condition (b) in the statement guarantees that the vector field is one-sided Lipschitz. The Lagrangian is found by taking a Legendre transform of~$H$.
\end{proof}

\begin{theorem}[Central limit theorem, general potential, arbitrary stable stationary point] \label{theorem:CLT_1d_arbitrarypotential}
Let $m$ be a solution of $G_2(x) = 0$. Let $k \in \bN \cup \{0\}$ and suppose that $U'$ is $2k+1$ times continuously differentiable. Additionally, suppose that
\begin{enumerate}[(a)]
\item $G_2^{(l)}(m) = 0$ for $l \leq 2k$,
\item if $k > 0$, then $G_2^{(2k+1)}(m) \leq 0$.
\end{enumerate}
Suppose that $n^{\frac{1}{2k+2}} (m_n(0) - m)$ converges in law to $\nu$. Then the process 
\[
n^{\frac{1}{2k+2}} \left( m_n \left( n^{\frac{k}{k+1}}t \right) - m \right)
\]
converges weakly in law on $D_\bR(\bR^+)$ to the unique solution of:
\begin{equation}\label{limiting_diffusion_CLT_CW}
\begin{cases}
\dd Y(t) = \frac{2}{(2k+1)!} \, Y(t)^{2k+1} G_{2}^{(2k+1)}(m)  \, \dd t +  2\sqrt{G_{1}(m)} \, \dd W(t) \\
Y(0) \sim \nu,
\end{cases}
\end{equation}
where $W(t)$ is a standard Brownian motion on $\bR$.
\end{theorem}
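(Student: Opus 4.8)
\emph{The plan} is to run a generator-convergence argument, but — since at the relevant scaling ($b_n = n^{1/(2(k+1))}$) the large-deviation speed $n b_n^{-2(k+1)}$ degenerates to $1$ and there is no exponential tilt — to conclude via the martingale-problem convergence theory of \cite[Chapter~4]{EK86} instead of the Hamilton--Jacobi machinery of the appendix.

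First I would compute the generator $B_n$ of $Y_n(t) := n^{\frac{1}{2k+2}}\big(m_n(n^{\frac{k}{k+1}}t) - m\big)$ from \eqref{eqn:CWgenerator_arbitrarypotential}. Writing $\epsilon_n := n^{-\frac{1}{2k+2}}$ so that $m_n = m + Y_n\epsilon_n$, a microscopic jump of $m_n$ by $\pm\tfrac2n$ is a jump of $Y_n$ by $\pm\delta_n$ with $\delta_n := 2n^{-\frac{2k+1}{2k+2}}$, and the rates pick up the time-change factor $n^{\frac{k}{k+1}}$, so
\[
B_n f(x) = n^{\frac{k}{k+1}}\,n\Big(r_n^+(x)\big[f(x+\delta_n)-f(x)\big] + r_n^-(x)\big[f(x-\delta_n)-f(x)\big]\Big),
\]
with $r_n^\pm(x) := \tfrac12\big(1\mp(m+x\epsilon_n)\big)e^{\pm U'(m+x\epsilon_n)}$. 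Exactly as in the proof of Theorem~\ref{theorem:mdp_1d_arbitrarypotential} one has $r_n^+-r_n^- = G_2(m+x\epsilon_n)$ and $r_n^++r_n^- = G_1(m+x\epsilon_n)$. Taylor-expanding $f\in C_c^\infty(\bR)$ to second order (extended arbitrarily off the lattice on which $Y_n$ lives — harmless, since for $x$ in a fixed compact set all relevant lattice sites lie in the interior of $E_n$ once $n$ is large, the endpoints $\pm1$ having receded to $\pm\infty$), and using that $n^{\frac{k}{k+1}}n\delta_n^2 = 4$, $n^{\frac{k}{k+1}}n\delta_n^3\to0$, $n^{\frac{k}{k+1}}n\delta_n = 2n^{\frac{2k+1}{2(k+1)}}$, while by hypothesis (a) the Taylor expansion of $G_2$ about $m$ starts at order $2k+1$ — whose coefficient carries $\epsilon_n^{2k+1} = n^{-\frac{2k+1}{2(k+1)}}$, exactly cancelling the growing prefactor — I obtain $B_nf \to \cB f$ locally uniformly, where
\[
\cB f(x) := \frac{2\,x^{2k+1}}{(2k+1)!}\,G_2^{(2k+1)}(m)\,f'(x) + 2\,G_1(m)\,f''(x).
\]

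Next I would identify $\cB$ as the generator of the SDE \eqref{limiting_diffusion_CLT_CW} and check that the latter is well posed. A generator $af''+bf'$ corresponds to $\dd Y = b(Y)\dd t + \sqrt{2a(Y)}\,\dd W$, and here $\sqrt{2\cdot 2G_1(m)} = 2\sqrt{G_1(m)}$. Since $m$ solves $G_2(m)=0$ we have $m=\tanh(U'(m))$, hence $|m|<1$ and $G_1(m) = \cosh(U'(m))(1-m^2) > 0$: the diffusion coefficient is a strictly positive constant, and the drift $\frac{2}{(2k+1)!}G_2^{(2k+1)}(m)\,y^{2k+1}$ is locally Lipschitz and — by hypothesis (b), which forces the leading coefficient of this odd-degree polynomial to be $\le 0$ (the drift is linear when $k=0$) — non-explosive; indeed $V(y) = 1+y^2$ satisfies $\cB V(y) = \frac{4}{(2k+1)!}G_2^{(2k+1)}(m)\,y^{2k+2} + 4G_1(m) \le 4G_1(m)$. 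So the martingale problem for $(\cB, C_c^\infty(\bR))$ is well posed. It then remains to upgrade the local generator convergence to weak convergence on $D_\bR(\bR^+)$ via \cite[Chapter~4]{EK86}: given $Y_n(0)\Rightarrow\nu$, one needs the compact containment condition for $\{Y_n\}$ uniformly in $n$, after which every subsequential limit solves the $\cB$-martingale problem with initial law $\nu$ and uniqueness closes the argument.

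\emph{The main obstacle} is the compact containment estimate. Because the state space of $Y_n$ expands to all of $\bR$, it cannot be obtained by a soft argument: one controls the region where $Y_n$ is $O(1)$ — i.e.\ $m_n$ within $o(1)$ of $m$ — by a Lyapunov argument for $V(y)=1+y^2$, noting that the dominant contribution of the drift term to $B_nV$ is $\tfrac{4}{(2k+1)!}G_2^{(2k+1)}(m)\,x^{2k+2}\le 0$ once the prefactor $n^{(2k+1)/(2(k+1))}$ is absorbed by $\epsilon_n^{2k+1}$, so that $B_nV$ is bounded above there exactly as $\cB V$ is; and one rules out the complementary ``macroscopic'' region — where $m_n$ stays a fixed distance from $m$ — by the concentration of $m_n$ around $m$ on the accelerated time scale (a consequence of the path-space large-deviation estimates underlying \cite[Theorem~1]{Kr16b}). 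The generator computation itself, and the verification of well-posedness of the limiting diffusion, are routine. (In the borderline sub-case $G_2^{(2k+1)}(m)=0$ allowed by (b), the limiting drift vanishes and $Y$ is simply $2\sqrt{G_1(m)}$ times a standard Brownian motion, and the same argument applies.)
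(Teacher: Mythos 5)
Your overall route is the same as the paper's: compute the generator of the space--time rescaled process, show locally uniform convergence to $\cB f = \frac{2x^{2k+1}}{(2k+1)!}G_2^{(2k+1)}(m)f' + 2G_1(m)f''$, establish well-posedness of the limiting martingale problem, and close via the martingale-problem convergence theorems of \cite{EK86} (the paper invokes Corollary~4.8.16 there) once compact containment is in hand. The generator computation, the cancellation of $n^{(2k+1)/(2k+2)}$ against $\epsilon_n^{2k+1}$, and the identification of the SDE all match the paper. The one place where you diverge — and where your argument as written has a gap — is compact containment. You propose a two-region scheme: a Lyapunov bound with $V(y)=1+y^2$ where $Y_n=O(1)$, plus an exclusion of the ``macroscopic'' region via the path-space LDP of \cite[Theorem~1]{Kr16b}. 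The second half does not work as cited for $k>0$: that LDP is on a fixed time horizon, whereas your process runs the original clock up to time $n^{k/(k+1)}T\to\infty$, so concentration of $m_n$ near $m$ over the whole window would require exit-time (Freidlin--Wentzell type) estimates, not a fixed-horizon LDP — and these are delicate here precisely because $m$ is only degenerately stable (indeed possibly marginal when $G_2^{(2k+1)}(m)=0$). Fortunately the macroscopic region is also unnecessary: if you localize with the stopping time $\tau_n^C=\inf\{t:|Y_n(t)|\geq C\}$, the jumps of $Y_n$ have size $2n^{-(2k+1)/(2k+2)}\to0$, so the stopped process never leaves $[-C-1,C+1]$ and $\PR(\sup_{t\le T}|Y_n(t)|>C)=\PR(\tau_n^C\le T)$ is controlled entirely by the microscopic estimate. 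This is exactly how the paper proceeds (Lemma~\ref{lmm:asymptotics_stopping_times}), except that it uses $\tilde f(x)=\log\sqrt{1+x^2}$ rather than $1+y^2$; the logarithmic choice has globally bounded derivative, which makes the drift bound and the quadratic-variation bound for the compensated-Poisson martingale uniform in $C$, so that Doob's inequality closes the argument with constants independent of the localization level. Your $V(y)=1+y^2$ can be made to work with the same localization, but you must then track the $C$-dependence of the martingale term; either way, drop the LDP appeal and argue entirely within the stopped microscopic regime.
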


The limiting diffusion \eqref{limiting_diffusion_CLT_CW} admits a unique invariant measure with density proportional to $\exp \left\{ - c y^{2k+2}/(2k+2)! \right\}$, with $c = 4 \vert G_2^{(2k+1)}(m) \vert$. Observe that it is precisely the limiting distribution prescribed by the analysis of equilibrium fluctuations performed in \cite{ElNe78b}.

\smallskip

The proof of the theorem given below is in the spirit of the proofs of the moderate deviation principles and based on a combination of proving the compact containment condition and the convergence of the generators.

We first prove the compact containment condition. Let 
\[
X_n(t) := n^{\frac{1}{2k+2}} \left( m_n \left( n^{\frac{k}{k+1}} t \right) - m \right)
\]
be the space-time rescaled fluctuation process. We introduce the family $\{ \tau_n^C \}_{n \geq 1}$ of stopping times, defined by
\[
\tau_n^C := \inf_{t \geq 0} \left\{ \left\vert X_n(t) \right\vert \geq C \right\}\,.
\]
We start by studying the asymptotic behavior of the sequence $\{ \tau_n^C \}_{n \geq 1}$.

\begin{lemma}\label{lmm:asymptotics_stopping_times}
For any $T \geq 0$ and $\varepsilon > 0$, there exist $n_\varepsilon \geq 1$ and $C_\varepsilon > 0$ such that
\[
\sup_{n \geq n_\varepsilon} \, \mathbb{P} \left( \tau_n^{C_\varepsilon} \leq T \right) \leq \varepsilon\,.
\]
\end{lemma}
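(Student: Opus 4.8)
The plan is to establish the compact containment condition for the rescaled fluctuation process $X_n(t) = n^{\frac{1}{2k+2}}(m_n(n^{\frac{k}{k+1}}t) - m)$ by a martingale/supermartingale argument. First I would compute the generator $B_n$ of $X_n$, which (as in the proof of Theorem~\ref{theorem:mdp_1d_arbitrarypotential}, but now with scaling factor $n^{\frac{1}{2k+2}}$ in space and $n^{\frac{k}{k+1}}$ in time) acts on a test function $g$ by
\begin{equation*}
B_n g(x) = n^{\frac{k}{k+1}} \frac{1 - m - x n^{-\frac{1}{2k+2}}}{2} e^{U'(m + x n^{-\frac{1}{2k+2}})}\left[g\!\left(x + 2 n^{-\frac{k}{k+1} + \frac{1}{2k+2} \cdot \frac{?}{?}}\right) - g(x)\right] + \cdots,
\end{equation*}
and the cleanest route is to test against a smooth proxy for $|x|^2$, e.g. $g(x) = 1 + x^2$ (or the centered containment function $\Upsilon$ from Lemma~\ref{lemma:FW_one_sided_lipschitz_containment_function}). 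Taylor-expanding as in the proof of Theorem~\ref{theorem:mdp_1d_arbitrarypotential} one finds $B_n g(x) \to \mathcal{B}g(x)$ uniformly on compacts, where $\mathcal{B}$ is the generator of the limiting diffusion~\eqref{limiting_diffusion_CLT_CW}; the key structural fact is that, by hypothesis (b), $G_2^{(2k+1)}(m) \leq 0$, so the drift term $\frac{2}{(2k+1)!} y^{2k+1} G_2^{(2k+1)}(m)$ is dissipative (it points toward the origin for large $|y|$), whence $\mathcal{B}g(x) \leq K(1 + x^2)$ for a constant $K$, i.e. $g$ is a Lyapunov function for the limit.

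Next I would turn this into a uniform-in-$n$ estimate. Since $B_n g \to \mathcal{B}g$ uniformly on compacts and $\mathcal{B}g(x) \leq K g(x)$ there, for any fixed $C$ one gets $B_n g(x) \leq 2K g(x)$ for all $|x| \leq C$ and all $n$ large enough (say $n \geq n_\varepsilon$); note that for the jumps one must also control that $X_n$ cannot overshoot the level $C$ by more than $o(1)$ in a single jump, which holds because the jump size $2 n^{-\frac{1}{2k+2}} \cdot$(something) tends to $0$. Then $M_n(t) := e^{-2K(t \wedge \tau_n^{C})} g(X_n(t \wedge \tau_n^{C}))$ is a supermartingale up to the stopping time $\tau_n^{C}$, so by optional stopping
\begin{equation*}
\mathbb{E}\!\left[ e^{-2K(T \wedge \tau_n^{C})} g(X_n(T \wedge \tau_n^{C})) \right] \leq \mathbb{E}[g(X_n(0))].
\end{equation*}
On the event $\{\tau_n^{C} \leq T\}$ we have $g(X_n(\tau_n^{C})) \geq 1 + C^2$ (up to the negligible overshoot), which yields
\begin{equation*}
\mathbb{P}\!\left(\tau_n^{C} \leq T\right) \leq \frac{e^{2KT}}{1 + C^2}\, \sup_{n} \mathbb{E}[g(X_n(0))].
\end{equation*}
Finally, since $n^{\frac{1}{2k+2}}(m_n(0) - m) = X_n(0)$ converges in law to $\nu$, the sequence $\{X_n(0)\}$ is tight, and with a mild uniform-integrability input one gets $\sup_n \mathbb{E}[g(X_n(0))] < \infty$; choosing $C_\varepsilon$ large enough that $e^{2KT}\sup_n \mathbb{E}[g(X_n(0))]/(1 + C_\varepsilon^2) \leq \varepsilon$ finishes the proof.

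The main obstacle I anticipate is not the martingale computation itself but the bookkeeping around two points: (i) ensuring the Lyapunov inequality $B_n g \leq 2K g$ holds genuinely \emph{uniformly in $n$} on $\{|x| \leq C\}$ — this requires that the error terms in the Taylor expansion of $B_n g$ are $o(1)$ uniformly on compacts, which in turn uses the scaling hypothesis analogous to $b_n^{2(k+1)}/n \to 0$ in Theorem~\ref{theorem:mdp_1d_arbitrarypotential} (here with $b_n = n^{\frac{1}{2k+2}}$, so $b_n^{2(k+1)}/n = 1$, meaning one is exactly at the borderline and must check the next-order term vanishes); and (ii) handling the jump overshoot so that "$g(X_n(\tau_n^C)) \geq 1 + C^2$" is legitimate despite $X_n$ being a jump process. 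A secondary subtlety is that $\sup_n \mathbb{E}[g(X_n(0))]$ finiteness does not follow from weak convergence alone; one either assumes it or truncates the initial condition. Both issues are standard and can be dispatched with the estimates already developed for the generator convergence, but they are where the care is needed.
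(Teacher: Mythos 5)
Your argument is essentially correct, but it follows a genuinely different route from the paper. The paper works with the slowly growing test function $\tilde{f}(x)=\log\sqrt{1+x^2}$ and the semimartingale decomposition $\tilde{f}(X_n(t))=\tilde{f}(X_n(0))+\int_0^t A_n\tilde{f}(X_n(s))\,\dd s+M_n^{(1)}(t)$: the drift integral is bounded by a constant times $T$ because $A_n\tilde f$ is uniformly bounded above on $\{|x|\le C\}$ (the term $\frac{2G_2^{(2k+1)}(m)}{(2k+1)!}\frac{x^{2k+2}}{1+x^2}$ being bounded for $k=0$ and nonpositive for $k>0$), and the martingale part is controlled via an explicit quadratic-variation estimate from the Poisson-process representation together with Doob's maximal inequality. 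The decisive payoff of choosing $\tilde f$ with logarithmic growth is that the initial-condition term is handled by tightness of $X_n(0)$ alone, i.e.\ exactly by the hypothesis that $n^{1/(2k+2)}(m_n(0)-m)$ converges in law. Your quadratic Lyapunov function $g(x)=1+x^2$ with the exponential supermartingale $e^{-2K(t\wedge\tau_n^C)}g(X_n(t\wedge\tau_n^C))$ and optional stopping is a clean and standard alternative, and it buys a much better decay in $C$ (of order $C^{-2}$ rather than $(\log C)^{-2}$), but at the price of needing $\sup_n\mathbb{E}[X_n(0)^2]<\infty$, which — as you correctly flag — does not follow from convergence in law. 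This is the one point that must be repaired for your proof to apply under the stated hypotheses: the repair is to condition on the $\mathcal{F}_0$-measurable event $\{|X_n(0)|\le c_0\}$, whose complement has probability at most $\varepsilon/2$ uniformly in $n$ by tightness, and to run the optional-stopping bound on that event, where $g(X_n(0))\le 1+c_0^2$. Your other two concerns are benign: the Lyapunov inequality $B_n g\le 2Kg$ is only needed on $\{|x|\le C+o(1)\}$ because the process is stopped there (the same localization the paper uses), and the jump overshoot is harmless for the lower bound $g(X_n(\tau_n^C))\ge 1+C^2$, which holds exactly by the definition of $\tau_n^C$. The unresolved exponent in your displayed generator should read $2n^{-\frac{2k+1}{2k+2}}$ for the jump size of $X_n$.
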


\begin{proof}
Let $C$ be a strictly positive constant. First observe that 
\begin{align}
\mathbb{P} \left( \tau_n^C \leq T \right) &\leq  \mathbb{P} \left( \sup_{0 \leq t \leq T \wedge \tau_n^C} \left\vert X_n(t) \right\vert \geq C \right). \label{target_probability}
\end{align}
We will obtain bounds for \eqref{target_probability} and show that it can be made arbitrarily small whenever $n$ is large enough. The idea is to get the estimate by considering a martingale decomposition for $\tilde{f}(X_n)$, where $\tilde{f} \in C^3(\bR)$ has bounded partial derivatives and is such that $\tilde{f}(x) = \tilde{f}(-x)$ and $\lim_{x \rightarrow \infty} \tilde{f}(x) = \infty$. \\
For any $n \geq 1$ and $j \in \{-1,+1\}$, let $N_n(j, \dd t)$ be the Poisson process counting the number of flips of spins with value $j$ up to time $n^{\frac{k}{k+1}} t$. The intensity of $N_n(j, \dd t)$ is  $R_n(j,t) \dd t$ with
\[
R_n (j,t) = \frac{n^{\frac{2k+1}{2k+2}}}{2} \,\left[ 1 + j \left( m + x n^{-\frac{1}{2k+2}} \right) \right] \, e^{-j U' \left( m + x n^{-\frac{1}{2k+2}} \right)}.
\]
Moreover, we define
\begin{equation}\label{eqn:Poisson_process_minus_intensity}
 \widetilde{N}_n(j, \dd t) := N_n(j, \dd t) - R_n (j, t) \, \dd t.
\end{equation}
Let $\tilde{f}(x) =  \log \sqrt{1+x^2}$ and consider the semi-martingale decomposition
\[
\tilde{f}(X_n(t)) = \tilde{f}(X_n(0)) + \int_0^t A_n \tilde{f}(X_n(s)) \, \dd s + M^{(1)}_n(t),
\]
where $M^{(1)}_n$ is the local martingale given by
\[
M^{(1)}_n (t) := \int_0^t \sum_{j = \pm 1} \left( \overline{\nabla}_j \, \tilde{f}(X_n(s)) \right)^2 \widetilde{N}_n(j, \dd s)
\]
and
\[
\overline{\nabla}_j \tilde{f}(X_n(t)) := \log \sqrt{1+ \left( X_n (t) - 2 j n^{-\frac{2k+1}{2k+2}}\right)^2} - \log \sqrt{1+X_n^2 (t)}.
\]
We have
\begin{multline*}
\mathbb{P} \left( \sup_{0 \leq t \leq T \wedge \tau_n^C} \left\vert X_n(t) \right\vert \geq C \right) \leq \mathbb{P} \left( \sup_{0 \leq t \leq T \wedge \tau_n^C} \tilde{f}(X_n(t)) \geq \tilde{f}(C) \right)  \\
\leq \mathbb{P} \left( \sup_{0 \leq t \leq T \wedge \tau_n^C} \tilde{f}(X_n(0)) \geq \frac{\tilde{f}(C)}{3} \right) + \mathbb{P} \left( \sup_{0 \leq t \leq T \wedge \tau_n^C} A_n \tilde{f}(X_n(t)) \geq \frac{\tilde{f}(C)}{3T} \right) \\
+ \mathbb{P} \left( \sup_{0 \leq t \leq T \wedge \tau_n^C} M^{(1)}_n(t) \geq \frac{\tilde{f}(C)}{3} \right).
\end{multline*}
We estimate the three terms in the right hand-side of previous inequality. All the constants appearing in the bounds below are \emph{independent of $n$}.
\begin{itemize}
\item 
Convergence in law of the initial condition implies $\mathbb{P} \left( \tilde{f}(X_n(0)) \geq c_0 (\varepsilon) \right) \leq \frac{\varepsilon}{3}$ for a sufficiently large $c_0(\varepsilon)$ and for all $n$.
\item
Since we are stopping the process $X_n(t)$ whenever it leaves the compact set $[-C,C]$, we find that $\tilde{f}(X_n(t))$ is bounded on the set of interest. Therefore, we can apply \eqref{eqn:inf_gen_space-time_rescaled_fluct_CLT} proven below to obtain
\begin{equation}\label{eqn:generator:ftilde}
A_n \tilde{f} (x) = \frac{2 G_2^{(2k+1)}(m)}{(2k+1)!} \frac{x^{2k+2}}{1+x^2} + 4 G_1(m) \frac{1-x^2}{(1+x^2)^2} + o(1),
\end{equation}
that, for $t \leq \tau_n^C$, implies 
\begin{equation}\label{estimate_generator}
A_n \tilde{f}(X_n (t))  \leq  c_1,
\end{equation}
for a sufficiently large $c_1>0$, independent of $C$. Note that the first term in the right hand-side of \eqref{eqn:generator:ftilde} is bounded if $k=0$ and negative if $k > 0$.
\item
Since 
\begin{align}
\left\vert R_n (j,t) \right\vert &\leq n^{\frac{2k+1}{2k+2}} \, e^{-j U' \left( m + x n^{-\frac{1}{2k+2}} \right)} \nonumber\\
&\leq n^{\frac{2k+1}{2k+2}} \, \left( e^{U'(m)} + O \left( n^{-\frac{1}{2k+2}}\right) \right)\label{estimate_intensity}
\end{align}
and
\begin{equation}\label{estimate_gradient}
\left( \overline{\nabla}_j \, \tilde{f}(X_n(t)) \right)^2 = \left(\int_{X_n(t)}^{X_n (t) - 2 j n^{-\frac{2k+1}{2k+2}}} \frac{y}{1+y^2} \, \dd y\right)^2 \leq  c_2 \, n^{-\frac{4k+2}{2k+2}},
\end{equation}
we obtain 
\begin{align}\label{estimate_quadratic_variation}
\mathbb{E} \left[ \left( M_n^{(1)} \left(T \wedge \tau_n^C\right)\right)^2 \right] &= \mathbb{E} \left[ \int_0^{T \wedge \tau_n^C} \sum_{j = \pm 1} \left( \overline{\nabla}_j \, \tilde{f}(X_n(s)) \right)^2 R_n(j,s) \dd s \right] \leq c_3 \, T
\end{align}
for sufficiently large $n$ and suitable $c_2, c_3 > 0$, independent of $C$. By Doob's maximal inequality, we can conclude
\[
\mathbb{P} \left( \sup_{0 < t < T \wedge \tau_n^C} M_n^{(1)} (t) \geq \frac{\tilde{f}(C)}{3} \right) \leq \frac{9Tc_3}{\tilde{f}(C)^2} \,.
\]
\end{itemize}

Therefore, for any $\varepsilon > 0$ and for sufficiently large $n$, by choosing the constant $C_\varepsilon \geq \max \left\{ c_0(\varepsilon), \sqrt{3c_1}, \tilde{f}^{-1} \left(\frac{27 T c_3}{\varepsilon} \right) \right\}$, we obtain $\sup_{n \geq n_\varepsilon} \mathbb{P} \left( \tau_n^{C_\varepsilon} \leq T \right) \leq \varepsilon$ as wanted.
\end{proof}

\begin{proof}[Proof of Theorem \ref{theorem:CLT_1d_arbitrarypotential}]

As introduced above, let $X_n(t) := n^{\frac{1}{2k+2}} \left( m_n \left( n^{\frac{k}{k+1}} t \right) - m \right)$ be the space-time rescaled process. The infinitesimal generator of the process  $X_n(t)$ can be easily deduced from \eqref{eqn:CWgenerator_arbitrarypotential}. It yields 
\begin{multline*}
A_n f(x) = n^{\frac{2k+1}{k+1}} \, \frac{1 - m - xn^{-\frac{1}{2k+2}}}{2} \, e^{U' \big( m + xn^{-\frac{1}{2k+2}} \big)} \left[f \left( x + 2n^{-\frac{2k+1}{2k+2}} \right) -f(x) \right] \\
+ n^{\frac{2k+1}{k+1}} \, \frac{1 + m + xn^{-\frac{1}{2k+2}}}{2} \, e^{-U' \big( m + xn^{-\frac{1}{2k+2}} \big)}\left[ f \left( x - 2n^{-\frac{2k+1}{2k+2}} \right)  -f(x) \right].
\end{multline*}
We want to characterize the limit of the sequence $A_n f$ for $f \in C^3_c(\mathbb{R})$, the set of three times continuously differentiable functions that are constant outside of a compact set. We first Taylor expand $f$ up to the second order
\begin{equation*}
f \left( x \pm 2n^{-\frac{2k+1}{2k+2}} \right) -f(x) = \pm 2n^{-\frac{2k+1}{2k+2}} f'(x) + 2n^{-\frac{2k+1}{k+1}} f''(x) + o \left( n^{-\frac{2k+1}{k+1}} \right)
\end{equation*}
and then, by combining the terms with $f'$ and the terms with $f''$, we obtain
\begin{multline*}
A_n f(x) = 2 n^{\frac{2k+1}{2k+2}} \, G_2 \left( m + x n^{-\frac{1}{2k+2}} \right) f'(x) \\
+ 2 \, G_1 \left( m + x n^{-\frac{1}{2k+2}} \right) f''(x) + o (1).
\end{multline*}
Now we Taylor expand $G_1$ and $G_2$ around $m$. For $G_2$ we use that the first $2k$ terms in the expansion vanish. As far as $G_1$, only the zero-th order term matters. Therefore it yields
\begin{equation}\label{eqn:inf_gen_space-time_rescaled_fluct_CLT}
A_n f(x) = \frac{2}{(2k+1)!} G_2^{(2k+1)}(m) \, x^{2k+1} f'(x) + 2 \, G_1( m) f''(x) + o(1).
\end{equation}
In other words, we conclude that for $f \in C_c^3(\bR)$ we have $\lim_n \vn{A_nf - Af} = 0$.

\smallskip

To prove the weak convergence result, we verify the conditions for Corollary 4.8.16 in \cite{EK86}. The martingale problem for the operator $(A,C_c^3(\bR))$ has a unique solution by Theorem 8.2.6 in \cite{EK86}. Additionally, the set $C_c^3(\bR)$ is an algebra that separates points. Finally, by Lemma \ref{lmm:asymptotics_stopping_times} the sequence $\{X_n\}_{n \geq 1}$ satisfies the compact containment condition. Thus the result follows by an application of Corollary 4.8.16 in \cite{EK86}. 

\end{proof}

The results in Section~\ref{sct:results} are recovered by setting $U(x) = \frac{\beta x^2}{2}$, with $\beta > 0$. Note that simply by choosing $U(x) = \frac{\beta x^2}{2} + B x$, with $\beta, B > 0$, we get the corresponding results for the Curie-Weiss with magnetic field.

\subsection{Proof of Theorem \ref{theorem:moderate_deviations_CW_critical_temperature_rescaling}} \label{subsct:MDP:critical:temp_rescaling}

The infinitesimal generator $A_n$ of the process $b_n m_n^{1 + \kappa b_n^{-2}}(b_n^2t)$ can be
easily deduced from \eqref{eqn:CWgenerator_arbitrarypotential} by using $U(x) = \frac{(1 + \kappa b_n^{-2})x^2}{2}$. The Hamiltonian 
\begin{equation*}
H_nf = b_n^{4}n^{-1} e^{-n b_n^{-4}f} A_n e^{n b_n^{-4}f},
\end{equation*}
is given by
\begin{multline*}
H_nf(x) = b_n^{6} \frac{1 - xb_n^{-1}}{2} \,e^{(1+\kappa b_n^{-2})xb_n^{-1}} \left[e^{n b_n^{-4}\left(f\left(x + 2b_n n^{-1}\right) - f(x)\right)}-1\right] \\
+ b_n^{6} \frac{1 +xb_n^{-1}}{2} \, e^{- (1+\kappa b_n^{-2})xb_n^{-1}} \left[e^{n b_n^{-4}\left(f\left(x - 2b_n n^{-1}\right) - f(x)\right)}-1\right].
\end{multline*}
We start by studying the limiting behaviour of the sequence $H_nf$ for $f \in C_c^2(\bR)$. Let $A_n$ be the generator of the process that has been speeded up by a factor $b_n^{2}$, i.e. $A_n f = b_n^{2} \cA_n f$. To compensate for the $b_n^6$ up front, we Taylor expand the exponential containing $f$ up terms of $O(b_n^{-6})$: 
\[
\exp\left\{n b_n^{-4}\left(f(x \pm 2b_n n^{-1}) -f(x)\right)\right\} -1 = \pm 2 b_n^{-3} f'(x) + 2 b_n^{-6}(f'(x))^2 + o(b_n^{-6}).
\]
Thus, combining the terms with $f'$ and the terms with $(f')^2$, we find that
\begin{multline*}
H_nf(x) = 2 \left[ b_n^3 \sinh \left( x b_n^{-1} + x \kappa b_n^{-3} \right) - x b_n^2 \cosh \left( x b_n^{-1} + x \kappa b_n^{-3} \right) \right]f'(x) \\
+ 2 (f'(x))^2 + o(1).
\end{multline*}
By Taylor expanding the hyperbolic functions
\[
\sinh \left( x b_n^{-1} + x \kappa b_n^{-3} \right) = x b_n^{-1} + x \kappa b_n^{-3} + \frac{1}{6}(x b_n^{-1} + x \kappa b_n^{-3})^3 + o \left( b_n^{-3} \right)
\]
\[
\cosh \left( x b_n^{-1} + x \kappa b_n^{-3} \right) = 1 + \frac{1}{2}(x b_n^{-1} + x\kappa b_n^{-3})^2 + o \left( b_n^{-2} \right),
\]
we get
\begin{equation*}
H_nf(x) = 2 \left[ \kappa x - \frac{1}{3} x^3 \right] f'(x) + 2 (f'(x))^2 + o(1),
\end{equation*}
where the $o(1)$ is uniform on compact sets. Thus, for $f \in C_c^2(\bR)$, $H_nf$ converges uniformly to $Hf(x) = H(x,f'(x))$ where 
\begin{equation*}
H(x,p) = 2 \left[ \kappa x - \frac{1}{3} x^3 \right] p + 2 p^2
\end{equation*}
The large deviation result follows by Theorem~\ref{theorem:Abstract_LDP}, Lemma~\ref{lemma:FW_one_sided_lipschitz_containment_function} and Proposition~\ref{proposition:FW_one_sided_lipschitz_comparison_principle}. The Lagrangian is found by taking a Legendre transform of $H$.

\appendix 

\section{Appendix: Large deviation principle via the Hamilton-Jacobi equation}
\label{sct:app:LDPviaHJequation}

In the Appendix, we will explain the basic steps to prove the path-space large deviation principle via uniqueness of solutions to the Hamilton-Jacobi equation. These steps follow the proofs in \cite{CIL92,FK06,DFL11} and have also been used in the proof of the large deviation principle for the dynamics of variants of the Curie-Weiss model via well-posedness of the Hamilton-Jacobi equation in \cite{Kr16b}.

First, we prove an abstract result on how to obtain uniqueness of viscosity solution of the Hamilton-Jacobi equation via the comparison principle. Then, we will state a result on how uniqueness, together with a exponential compact containment condition, yields the large deviation principle. The verification of the conditions for this result have already been carried out in Section \ref{section:strategy_of_proof}. 

We make the remark that the requirements on our space $E$ in Section \ref{section:appendix_ldp} and are more stringent than the ones in Sections \ref{section:appendix_definitions} and \ref{section:abstract_proof_of_comparison_principle}. The definitions of good penalization functions and of a good containment function are unchanged for the next two sections.

\subsection{Viscosity solutions for the Hamilton-Jacobi equation} \label{section:appendix_definitions}

Fix some $d \geq 1$. In this section, and in Section \ref{section:abstract_proof_of_comparison_principle}, let $E$ be a subset of $\bR^d$ that is contained in the $\bR^d$-closure of its $\bR^d$-interior. Additionally, assume that $E$ is a Polish space when equipped with its subspace-topology. 

\begin{remark}
The assumption that $E$ is contained in the $\bR^d$-closure of its $\bR^d$-interior is needed to ensure that the gradient $\nabla f(x)$ of a function $f \in C_b^1(\bR^d)$ for $x \in E$ is determined by its values in $E$.
\end{remark}

\begin{remark}
We say that $A \subseteq \bR^d$ is a $G_\delta$ set if it is the countable intersection of open sets in $\bR^d$. By Theorems 4.3.23 and 4.3.24 in \cite{En89}, we find that $E$ is Polish if and only if it is a $G_\delta$ set in $\bR^d$.
\end{remark}

Let $H : E \times \bR^d \rightarrow \bR$ be a continuous map. For $\lambda > 0$ and $h \in C_b(E)$, we will solve the \textit{Hamilton-Jacobi} equation
\begin{equation} \label{eqn:differential_equation_intro}
f(x) - \lambda H(x, \nabla f(x)) = h(x)  \qquad x \in E,
\end{equation}
in the \textit{viscosity} sense.

\begin{definition} \label{definition:viscosity} 
We say that $u$ is a \textit{(viscosity) subsolution} of equation \eqref{eqn:differential_equation_intro} if $u$ is bounded, upper semi-continuous and if, for every $f \in \cD(H)$ such that $\sup_x u(x) - f(x) < \infty$ and every sequence $x_n \in E$ such that
\begin{equation*}
\lim_{n \rightarrow \infty} u(x_n) - f(x_n)  = \sup_x u(x) - f(x),
\end{equation*}
we have
\begin{equation*}
\lim_{n \rightarrow \infty} u(x_n) - \lambda Hf(x_n) - h(x_n) \leq 0.
\end{equation*}
We say that $v$ is a \textit{(viscosity) supersolution} of equation \eqref{eqn:differential_equation_intro} if $v$ is bounded, lower semi-continuous and if, for every $f \in \cD(H)$ such that $\inf_x v(x) - f(x) > - \infty$ and every sequence $x_n \in E$ such that
\begin{equation*}
\lim_{n \rightarrow \infty} v(x_n) - f(x_n)  = \inf_x v(x) - f(x),
\end{equation*}
we have
\begin{equation*}
\lim_{n \rightarrow \infty} v(x_n) - \lambda Hf(x_n) - h(x_n) \geq 0.
\end{equation*}
\end{definition}

At various points, we will refer to \cite{FK06}. The notion of viscosity solution used here corresponds to the notion of \textit{strong viscosity solution} in \cite{FK06}. For operators of the form $Hf(x) = H(x,\nabla f(x))$ these two notions are equivalent. See also Lemma 9.9 in \cite{FK06}.

\begin{definition} 
We say that equation \eqref{eqn:differential_equation_intro} satisfies the \textit{comparison principle} if for a subsolution $u$ and supersolution $v$ we have $u \leq v$.
\end{definition}

Note that if the comparison principle is satisfied, then a viscosity solution is unique. 

To prove the comparison principle, we extend our scope by considering viscosity sub- and supersolutions to the Hamilton-Jacobi equation with two different operators that extend the original Hamiltonian in a suitable way.

Let $M(E,\overline{\bR})$ be the set of measurable functions from $E$ to $\overline{\bR} := \bR \cup \{ \infty \}$.

\begin{definition}
We say that $H_\dagger \subseteq M(E,\overline{\bR}) \times M(E,\overline{\bR})$ is a \textit{viscosity sub-extension} of $H$ if $H \subseteq H_\dagger$ and if for every $\lambda >0$ and $h \in C_b(E)$ a viscosity subsolution to $f-\lambda H f = h$ is also a viscosity subsolution to $f - \lambda H_\dagger f = h$. Similarly, we define a \textit{viscosity super-extension}.
\end{definition}

\begin{definition}
Consider two operators $H_\dagger, H_\ddagger  \subseteq M(E,\overline{\bR}) \times M(E,\overline{\bR})$ and pick $h \in C_b(E)$ and $\lambda > 0$. We say that the equations
\begin{equation*}
f - \lambda H_\dagger f = h, \qquad f - \lambda H_\ddagger f = h
\end{equation*}
satisfy the comparison principle if any subsolution $u$ to the first and any supersolution $v$ to the second equation satisfy $u \leq v$.
\end{definition}

We have the following straightforward result.

\begin{lemma}
Suppose that $H_\dagger$ and $H_{\ddagger}$ are a sub- and superextension of $H$ respectively. Fix $\lambda > 0$ and $h \in C_b(E)$. If the comparison principle is satisfied for the equations
\begin{equation*}
f - \lambda H_\dagger f = h, \qquad f - \lambda H_\ddagger f = h,
\end{equation*}
then the comparison principle is satisfied for
\begin{equation*}
f - \lambda H f = h.
\end{equation*}
\end{lemma}

\subsection{Abstract proof of the comparison principle} \label{section:abstract_proof_of_comparison_principle}

We introduce two convenient viscosity extensions of a particular Hamiltonian $H$ in terms of good penalization functions $\{\Psi_\alpha\}_{\alpha \geq 0}$ and containment function $\Upsilon$.
\begin{align*}
\cD(H_\dagger) & := C^1_b(E) \cup \left\{x \mapsto  (1-\varepsilon)\Psi_\alpha(x,y) + \varepsilon \Upsilon(x) +c \, \middle| \, \alpha,\varepsilon > 0, c \in \bR \right\}, \\
\cD(H_\ddagger) & := C^1_b(E) \cup \left\{y \mapsto - (1+\varepsilon)\Psi_\alpha(x,y) - \varepsilon \Upsilon(y) +c \, \middle| \, \alpha,\varepsilon > 0, c \in \bR \right\}.
\end{align*}
For $f \in \cD(H_\dagger)$, set $H_\dagger f(x) = H(x,\nabla f(x))$ and for $f \in \cD(H_\ddagger)$, set $H_\ddagger f(x) = H(x,\nabla f(x))$.

\begin{lemma} \label{lemma:viscosity_extension}
The operator $(H_\dagger,\cD(H_\dagger))$ is a viscosity sub-extension of $H$ and $(H_\ddagger,\cD(H_\ddagger))$ is a viscosity super-extension of $H$.
\end{lemma}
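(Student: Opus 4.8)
The plan is to unwind the definitions of viscosity sub-/super-solution and of the extended operators $H_\dagger,H_\ddagger$, and check that adding the functions $(1-\varepsilon)\Psi_\alpha(\cdot,y)+\varepsilon\Upsilon(\cdot)+c$ (resp. $-(1+\varepsilon)\Psi_\alpha(x,\cdot)-\varepsilon\Upsilon(\cdot)+c$) as new test functions does not create any new obstruction to the subsolution (resp. supersolution) inequality. Concretely, suppose $u$ is a viscosity subsolution of $f-\lambda H f = h$ in the original sense, i.e. with test functions from $\cD(H)=C_c^2(E)$ (or $C^1_b(E)$ as in the appendix). I must show $u$ is also a subsolution with the enlarged domain $\cD(H_\dagger)$. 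Since $C^1_b(E)\subseteq \cD(H_\dagger)$ contributes nothing new, the only thing to verify is: for $g(x):=(1-\varepsilon)\Psi_\alpha(x,y)+\varepsilon\Upsilon(x)+c$ with $\sup_x (u(x)-g(x))<\infty$ and any sequence $x_n$ with $u(x_n)-g(x_n)\to\sup_x(u(x)-g(x))$, one has $\limsup_n \big(u(x_n)-\lambda H(x_n,\nabla g(x_n))-h(x_n)\big)\le 0$.

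The key step is an approximation/localization argument: the function $g$ is smooth ($C^2$ by $(\Psi b)$ and $(\Upsilon b)$) on a neighbourhood of $E$ in $\bR^d$, but it is not constant outside a compact set, so it is not literally in $C_c^2(E)$. However, the maximizing sequence $x_n$ can be assumed to lie in a compact set: because $\Upsilon$ has compact sublevel sets by $(\Upsilon c)$ and appears with a strictly positive coefficient $\varepsilon$, while $u$ is bounded and $\Psi_\alpha\ge 0$, the quantity $u(x)-g(x)\le \|u\|_\infty - \varepsilon\Upsilon(x) - c$ tends to $-\infty$ as $\Upsilon(x)\to\infty$; hence the near-maximizers $x_n$ stay in a fixed compact set $K$. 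Now modify $g$ to a function $\tilde g\in C_c^2(E)$ that agrees with $g$ on a neighbourhood of $K$ (cut off $\Psi_\alpha$ and $\Upsilon$ smoothly outside a slightly larger compact set, using that $E$ is contained in the closure of its interior so gradients on $E$ are determined by values on $E$). Then $\nabla \tilde g = \nabla g$ on $K$, and for large $n$ we have $x_n\in K$ so $H\tilde g(x_n)=H(x_n,\nabla g(x_n))$; moreover $\sup_x(u-\tilde g)$ is still attained along (a tail of) $(x_n)$ since outside $K$ we only made $\tilde g$ smaller or comparable — one checks $\sup_x(u-\tilde g)=\sup_x(u-g)$ by arranging the cutoff so that $\tilde g\ge g$ globally while $\tilde g = g$ on $K\supseteq$ the relevant near-maximizers. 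Applying the original subsolution property to $\tilde g\in C_c^2(E)\subseteq\cD(H)$ with the sequence $x_n$ gives $\limsup_n(u(x_n)-\lambda H\tilde g(x_n)-h(x_n))\le 0$, which is exactly the desired inequality for $g$. The supersolution case is entirely symmetric, using that $-\varepsilon\Upsilon$ now forces near-minimizers of $v-g$ into a compact set and one cuts off $\Psi_\alpha(x,\cdot)$ and $\Upsilon$ so that the modified function lies below the original.

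I expect the main obstacle to be the bookkeeping in the cutoff construction: one must produce $\tilde g\in C_c^2(E)$ that (i) coincides with $g$ together with its gradient on the compact set containing the near-optimizers, (ii) is globally ordered relative to $g$ in the right direction so that the supremum/infimum of $u-\tilde g$ (resp. $v-\tilde g$) is not changed and is still attained along the given sequence, and (iii) stays twice continuously differentiable on a neighbourhood of $E$ in $\bR^d$ — this last point relying on the standing assumption that $E$ is contained in the $\bR^d$-closure of its $\bR^d$-interior, so that smoothness and gradients "on $E$" are inherited from an ambient neighbourhood. Everything else is a direct substitution into Definition \ref{definition:viscosity}. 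Since the statement is flagged as "straightforward", I would keep the write-up short, doing the subsolution case in detail and remarking that the supersolution case follows by replacing $u,\,g,\,\sup$ by $v,\,-g,\,\inf$ and reversing the relevant inequalities.
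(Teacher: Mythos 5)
Your overall strategy --- force the near\mbox{-}maximizers of $u-g$ into a compact set using the compact sublevel sets of $\Upsilon$, replace $g$ by a $C_c^2$ truncation agreeing with $g$ and $\nabla g$ there, and transfer the subsolution inequality --- is the same idea the paper implements via the monotone truncations $f_n=\phi_n\circ f$ and Lemma~\ref{lemma:extension_lemma_7.7inFK} (Lemma 7.7 of \cite{FK06}). But there is a concrete error in your cutoff: you require $\tilde g\ge g$ \emph{globally} with $\tilde g\in C_c^2(E)$. Every element of $C_c^2(E)$ is constant outside a compact set, hence bounded, whereas $g(x)=(1-\varepsilon)\Psi_\alpha(x,y)+\varepsilon\Upsilon(x)+c\ge \varepsilon\Upsilon(x)+c$ is unbounded above on the unbounded state space ($\Upsilon$ has compact sublevel sets, so $\Upsilon\to\infty$ along any sequence leaving compacts). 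No such $\tilde g$ exists. The symmetric objection kills the supersolution case as you state it: there you ask for a bounded modification lying globally \emph{below} $-(1+\varepsilon)\Psi_\alpha(x,\cdot)-\varepsilon\Upsilon(\cdot)+c$, which is unbounded below.

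The repair is to truncate in the opposite direction, $\tilde g=\phi_N\circ g\le g$ with $\phi_N$ the identity below level $N$ and constant above $N+1$ (exactly the paper's $f_n$). Then $u-\tilde g\ge u-g$, so preservation of the supremum is no longer a pointwise triviality; it holds because on the modified region $\{g>N\}$ one has $u-\tilde g\le \|u\|_\infty-N$, which for $N$ large is strictly below $\sup_x\bigl(u(x)-g(x)\bigr)$. This extra step, which uses the boundedness of $u$ (resp.\ $v$), is precisely what condition (c) of Lemma~\ref{lemma:extension_lemma_7.7inFK} encodes and what the paper verifies. With the direction of the cutoff reversed and this observation added, your argument closes and is essentially the paper's proof written out by hand rather than routed through the abstract extension lemma.
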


In the proof we need Lemma 7.7 from \cite{FK06}. We recall it here for the sake of readability. Let $M_\infty(E,\overline{\bR})$ denote the set of measurable functions $f : E \rightarrow \bR \cup \{\infty\}$ that are bounded from below.
\begin{lemma}[Lemma 7.7 in \cite{FK06}] \label{lemma:extension_lemma_7.7inFK}
Let $H$ and $H_\dagger \subseteq M_\infty(E,\overline{\bR}) \times M(E,\overline{\bR})$ be two operators. Suppose that for all $(f,g) \in H_\dagger$ there exist $\{(f_n,g_n)\} \subseteq H_\dagger$ that satisfy the following conditions:
\begin{enumerate}[(a)]
\item For all $n$, the function $f_n$ is lower semi-continuous.
\item For all $n$, we have $f_n \leq f_{n+1}$ and $f_n \rightarrow f$ point-wise.
\item Suppose $x_n \in E$ is a sequence such that $\sup_n f_n(x_n) < \infty$ and $\inf_n g_n(x_n) > - \infty$, then $\{x_n\}_{n \geq 1}$ is relatively compact and if a subsequence $x_{n(k)}$ converges to $x \in E$, then
\begin{equation*}
\limsup_{k \rightarrow \infty} g_{n(k)}(x_{n(k)}) \leq g(x).
\end{equation*}
\end{enumerate}
Then $H_\dagger$ is a viscosity sub-extension of $H$.\\
An analogous result holds for super-extensions $H_{\ddagger}$ by taking $f_n$ a decreasing sequence of upper semi-continuous functions and by replacing requirement (c) with
\begin{enumerate}
\item[(c$^{\prime}$)] Suppose $x_n \in E$ is a sequence such that $\inf_n f_n(x_n) > - \infty$ and $\sup_n g_n(x_n) <  \infty$, then $\{x_n\}_{n \geq 1}$ is relatively compact and if a subsequence $x_{n(k)}$ converges to $x \in E$, then
\begin{equation*}
\liminf_{k \rightarrow \infty} g_{n(k)}(x_{n(k)}) \geq g(x).
\end{equation*}
\end{enumerate} 
\end{lemma}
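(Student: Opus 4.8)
The plan is to verify the hypotheses of Lemma~\ref{lemma:extension_lemma_7.7inFK}. I would treat the sub-extension $H_\dagger$ in detail; the assertion for $H_\ddagger$ is entirely parallel, using the super-extension form of that lemma (decreasing upper semi-continuous approximants and condition~(c$'$)) together with the sign symmetry $(\nabla\Psi_\alpha(\cdot,y))(x) = -(\nabla\Psi_\alpha(x,\cdot))(y)$ from ($\Psi$c), which interchanges the roles of the two coordinates of $\Psi_\alpha$. Concretely, for each $(f,g) \in H_\dagger$ I must exhibit a sequence $\{(f_n,g_n)\} \subseteq H_\dagger$ satisfying properties (a)--(c) of Lemma~\ref{lemma:extension_lemma_7.7inFK}. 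Since $\cD(H_\dagger)$ is the union of $C^1_b(E)$ and the family of ``$\Upsilon$-twisted'' penalizations, I would split into these two cases.

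For a twisted test function $f(x) = (1-\varepsilon)\Psi_\alpha(x,y_0) + \varepsilon\Upsilon(x) + c$ (with $\varepsilon \in (0,1)$, $\alpha > 0$, $y_0 \in E$, $c \in \bR$ — this is the range of $\varepsilon$ used in the comparison argument, and it makes $f$ bounded below since $\Psi_\alpha,\Upsilon \geq 0$) and $g = H(\cdot,\nabla f(\cdot))$, I would simply take the constant sequence $f_n := f$, $g_n := g$. Properties (a) and (b) are immediate. For (c), the point is that $f(x) \geq \varepsilon\Upsilon(x) + c$ by ($\Psi$a) and ($\Upsilon$a); hence any sequence $x_n$ with $\sup_n f(x_n) < \infty$ has $\sup_n \Upsilon(x_n) < \infty$, and by the compact-sublevel property ($\Upsilon$c) it lies in a compact set, so it is relatively compact. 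If a subsequence $x_{n(k)} \to x \in E$, then, because $\Psi_\alpha$ and $\Upsilon$ are twice continuously differentiable (($\Psi$b), ($\Upsilon$b)), $\nabla f$ is continuous, and continuity of $H$ gives $g(x_{n(k)}) = H(x_{n(k)},\nabla f(x_{n(k)})) \to H(x,\nabla f(x)) = g(x)$, which is exactly the $\limsup$ bound required. So Lemma~\ref{lemma:extension_lemma_7.7inFK} applies on this part of the domain.

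On the part $f \in C^1_b(E)$ one has $g = H(\cdot,\nabla f(\cdot))$, and since $f$ already lies in the domain of $H$ itself, the subsolution inequality tested against $f$ for $H_\dagger$ is verbatim the one for $H$, so nothing has to be checked there. (Should one instead fix $\cD(H) = C^2_c(E)$, I would first reduce to that class by cutting $f$ off so it is constant outside a compact set and mollifying to gain the missing derivative, checking that the optimizing sequences are not disturbed, and I would use that the property of being a viscosity sub-/super-extension is preserved under unions of operators, so that the $C^1_b$ piece and the twisted piece recombine into $H_\dagger$.) Assembling the two cases, every viscosity subsolution of $f - \lambda Hf = h$ is a viscosity subsolution of $f - \lambda H_\dagger f = h$, i.e.\ $H_\dagger$ is a viscosity sub-extension of $H$; the supersolution statement for $H_\ddagger$ follows mutatis mutandis.

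I expect the only genuine obstacle to be the relative compactness demanded in condition~(c)/(c$'$) of Lemma~\ref{lemma:extension_lemma_7.7inFK}: nothing a priori keeps the optimizing sequences bounded, and it is precisely the additive $\varepsilon\Upsilon$ term — via ($\Upsilon$c) — that supplies the confinement, which is the whole reason the containment function is built into $\cD(H_\dagger)$ and $\cD(H_\ddagger)$. The continuity step along convergent subsequences and, if needed, the truncation/mollification handling of the $C^1_b(E)$ test functions (plus the stability of sub-extensions under unions) are routine.
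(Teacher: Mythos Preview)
You have not written a proof of Lemma~\ref{lemma:extension_lemma_7.7inFK} at all: that lemma is the abstract approximation criterion, and the paper does not prove it --- it is quoted verbatim from \cite{FK06}. What you have written is a proof of Lemma~\ref{lemma:viscosity_extension}, which \emph{applies} Lemma~\ref{lemma:extension_lemma_7.7inFK} to the specific operators $H_\dagger,H_\ddagger$ built out of $\Psi_\alpha$ and $\Upsilon$. Your opening sentence, ``verify the hypotheses of Lemma~\ref{lemma:extension_lemma_7.7inFK}'', already takes the lemma for granted.

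Even read as a proof of Lemma~\ref{lemma:viscosity_extension}, your argument has a real gap. The paper's statement of Lemma~\ref{lemma:extension_lemma_7.7inFK} contains a typo: the approximating pairs must lie in $H$, not in $H_\dagger$ (otherwise the conclusion ``$H_\dagger$ is a sub-extension of $H$'' has no content). Your choice of the constant sequence $f_n \equiv f$ therefore does not work, since $f(x) = (1-\varepsilon)\Psi_\alpha(x,y_0) + \varepsilon\Upsilon(x) + c$ is unbounded and not in $\cD(H) = C_c^2(E)$. The paper's actual proof constructs nontrivial approximants $f_n = \phi_n \circ f$ with $\phi_n$ a smooth cut-off equal to the identity on $[-\infty,n]$ and constant beyond $n+1$; compactness of the sublevel sets of $\Upsilon$ then forces $f_n \in C_c^2(E)$, and on the compact set where the optimizing sequence lives one has $\nabla f_n = \nabla f$ for $n$ large, which delivers (c). Your identification of $\varepsilon\Upsilon$ as the source of relative compactness is correct, but the truncation step is essential and missing.
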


\begin{proof}[Proof of Lemma \ref{lemma:viscosity_extension}]
We only prove the sub-extension part. 

Consider a collection of smooth functions $\phi_n : \bR \rightarrow \bR$ defined as $\phi_n(x) = x$ if $x \leq n$ and $\phi_n(x) = n+1$ for $x \geq n+1$. Note that $\phi_{n + 1} \geq \phi_n$ for all $n$.

\smallskip

Fix a function $f \in \cD(H_\dagger)$ of the type $f(x) = (1-\varepsilon)\Psi_\alpha(x,y)+\varepsilon \Upsilon(x) + c$ and write $g = H_\dagger f$.

Because $\Psi_\alpha$ and $\Upsilon$ are good penalization and good containment functions, $f$ has a twice continuously differentiable extensions to a neighbourhood of $E$ in $\bR^d$. We will denote this extension also by $f$. Set $f_n = \phi_n \circ f$. Since $f$ is bounded from below and $\Upsilon$ has compact level sets, we find that $f_n$ is constant outside a compact set in $E$. Furthermore, $f$ and $\phi_n$ are twice continuously differentiable, so that we find $f_n \in C_c^2(E)$. We obtain $f_n \in \cD(H)$ and write $g_n = H f_n$.

We verify conditions (a)-(c) of Lemma \ref{lemma:extension_lemma_7.7inFK}. (a) has been verified above. (b) is a consequence of the fact that $n \mapsto \phi_n$ is increasing. For (c), let $\{x_n\}_{n \geq 1}$ be a sequence such that $\sup_n f_n(x_n) = M < \infty$. It follows by the compactness of the level sets of $\Upsilon$ and the positivity of $\Psi_\alpha$ that the set
\begin{equation*}
K := \{z \in E \, | \, f(z) \leq M\}
\end{equation*}
is compact. Thus the sequence $\{x_n\}$ is relatively compact, and in particular, there exist converging subsequences $x_{n(k)}$ with limits $x \in K$. For any such subsequence, we show that $\limsup_k g_{n(k)} (x_{n(k)}) \leq g(x)$.

\smallskip

As $\Psi_\alpha$ and $\Upsilon$, and thus $f$, are twice continuously differentiable up to a neighbourhood $U$ of $E$ in $\bR^d$, we find that the set
\begin{equation*}
V := \{z \in U \, | \, f(z) < M+1\}
\end{equation*}
is open in $\bR^d$ and contains $K$. For two arbitrary continuously differentiable functions $h_1,h_2$ on $U$, if $h_1(z) = h_2(z)$ for all $z \in V$, then $\nabla h_1(z) = \nabla h_2(z)$ for all $z \in V$.

\smallskip

Now suppose $x_{n(k)}$ is a subsequence in $K$ converging to some point $x \in K$. As $f$ is bounded on $V$, there exists a sufficiently large $N$ such that for all $n \geq N$ and $y \in V$, we have $f_n(y) = f(y)$. We conclude $\nabla f_n(y) = \nabla f(y)$ for $y \in K \subseteq V$ and hence
\begin{equation*}
g_n(y) = H(y,\nabla f_n(y)) = H(y,\nabla f(y)) = g(y).
\end{equation*}
In particular, we find $\limsup_{k} g_{n(k)}(x_{n(k)}) \leq g(x)$.
\end{proof}

We have the following variants of Lemma 9.2 in \cite{FK06} and Proposition 3.7 in \cite{CIL92}. Note that the presence of the containment function $\Upsilon$ makes sure that the suprema are attained. This motivates the name containment function: $\Upsilon$ forces the maxima to be in some compact set. 

\begin{lemma}\label{lemma:doubling_lemma}
Let $u$ be bounded and upper semi-continuous, let $v$ be bounded and lower semi-continuous, let $\Psi_\alpha : E^2 \rightarrow \bR^+$ be good penalization functions and let $\Upsilon$ be a good containment function.

\smallskip

Fix $\varepsilon > 0$. For every $\alpha >0$ there exist points $x_{\alpha,\varepsilon},y_{\alpha,\varepsilon} \in E$, such that
\begin{multline*}
\frac{u(x_{\alpha,\varepsilon})}{1-\varepsilon} - \frac{v(y_{\alpha,\varepsilon})}{1+\varepsilon} - \Psi_\alpha(x_{\alpha,\varepsilon},y_{\alpha,\varepsilon}) - \frac{\varepsilon}{1-\varepsilon}\Upsilon(x_{\alpha,\varepsilon}) -\frac{\varepsilon}{1+\varepsilon}\Upsilon(y_{\alpha,\varepsilon}) \\
= \sup_{x,y \in E} \left\{\frac{u(x)}{1-\varepsilon} - \frac{v(y)}{1+\varepsilon} -  \Psi_\alpha(x,y)  - \frac{\varepsilon}{1-\varepsilon}\Upsilon(x) - \frac{\varepsilon}{1+\varepsilon}\Upsilon(y)\right\}.
\end{multline*}
Additionally, for every $\varepsilon > 0$ we have that
\begin{enumerate}[(a)]
\item The set $\{x_{\alpha,\varepsilon}, y_{\alpha,\varepsilon} \, | \,  \alpha > 0\}$ is relatively compact in $E$.
\item All limit points of $\{(x_{\alpha,\varepsilon},y_{\alpha,\varepsilon})\}_{\alpha > 0}$ are of the form $(z,z)$ and for these limit points we have $u(z) - v(z) = \sup_{x \in E} \left\{u(x) - v(x) \right\}$.
\item Suppose $\Psi_\alpha$ can be written as $\Psi_\alpha = \alpha \Psi$, where $\Psi \geq 0$. Then we have 
\[
\lim_{\alpha \rightarrow \infty}  \alpha \Psi(x_{\alpha,\varepsilon},y_{\alpha,\varepsilon}) = 0.
\]
\end{enumerate}
\end{lemma}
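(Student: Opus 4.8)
The statement to be proved is Lemma~\ref{lemma:doubling_lemma}, the classical doubling-of-variables maximization lemma in the viscosity-solution setting, now equipped with a good containment function $\Upsilon$ to guarantee that the relevant suprema are attained in a compact region.

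\textbf{Overall approach.} The plan is to fix $\varepsilon>0$ and, for each $\alpha>0$, study the function
\[
\Phi_\alpha(x,y) := \frac{u(x)}{1-\varepsilon} - \frac{v(y)}{1+\varepsilon} - \Psi_\alpha(x,y) - \frac{\varepsilon}{1-\varepsilon}\Upsilon(x) - \frac{\varepsilon}{1+\varepsilon}\Upsilon(y)
\]
on $E\times E$. First I would show $\Phi_\alpha$ attains its supremum: $u$ is bounded and upper semi-continuous, $-v$ is bounded and upper semi-continuous, $-\Psi_\alpha\le 0$, and $-\Upsilon\le 0$ with compact sublevel sets by ($\Upsilon$c). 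Hence for a maximizing sequence, the terms $-\frac{\varepsilon}{1-\varepsilon}\Upsilon(x)-\frac{\varepsilon}{1+\varepsilon}\Upsilon(y)$ stay bounded below along the sequence, which forces both coordinates into a fixed compact set $\{\Upsilon\le R\}^2$; upper semi-continuity of the remaining part then yields a maximizer $(x_{\alpha,\varepsilon},y_{\alpha,\varepsilon})$. This establishes the displayed identity.

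\textbf{Compactness and limit points (parts (a) and (b)).} Let $S:=\sup_{x}\{u(x)-v(x)\}$, which is finite since $u,v$ are bounded. Comparing the value at $(x_{\alpha,\varepsilon},y_{\alpha,\varepsilon})$ with the value at the diagonal point $(x_0,x_0)$ where $x_0$ realizes (approximately) $S$ and where $\Upsilon(x_0)$ is finite (indeed one may use the point $x_0$ from ($\Upsilon$a) and handle the constant shift), one gets a lower bound on $\Phi_\alpha(x_{\alpha,\varepsilon},y_{\alpha,\varepsilon})$ uniform in $\alpha$. Since $u,v$ are bounded and $\Psi_\alpha\ge 0$, this lower bound forces $\frac{\varepsilon}{1-\varepsilon}\Upsilon(x_{\alpha,\varepsilon}) + \frac{\varepsilon}{1+\varepsilon}\Upsilon(y_{\alpha,\varepsilon})$ to be bounded uniformly in $\alpha$, hence by ($\Upsilon$c) the family $\{x_{\alpha,\varepsilon},y_{\alpha,\varepsilon}\mid\alpha>0\}$ lies in a fixed compact set, giving (a). For (b), take any limit point $(z,z')$ along $\alpha_k\to\infty$. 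From the uniform lower bound on $\Phi_{\alpha_k}$ and boundedness of the other terms, $\Psi_{\alpha_k}(x_{\alpha_k},y_{\alpha_k})$ stays bounded; using property ($\Psi$a) (monotonicity and the $\alpha\to\infty$ limit being $+\infty$ off the diagonal) together with lower semi-continuity of the limiting penalization, one concludes $z=z'$. Then passing to the limsup in the inequality $\Phi_{\alpha_k}(x_{\alpha_k},y_{\alpha_k})\ge \Phi_{\alpha_k}(w,w)$ for arbitrary $w$, using upper semi-continuity of $u$ and $-v$, lower semi-continuity of $\Upsilon$, and $\Psi_{\alpha_k}(w,w)=0$, yields $\frac{u(z)}{1-\varepsilon}-\frac{v(z)}{1+\varepsilon}-\frac{\varepsilon}{1-\varepsilon}\Upsilon(z)-\frac{\varepsilon}{1+\varepsilon}\Upsilon(z)\ge \frac{u(w)}{1-\varepsilon}-\frac{v(w)}{1+\varepsilon}-(\cdots)\Upsilon(w)$ for all $w$; a short argument letting $\varepsilon$ play its role (or a direct comparison) gives $u(z)-v(z)=S$.

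\textbf{Part (c) and the main obstacle.} Assume $\Psi_\alpha=\alpha\Psi$ with $\Psi\ge 0$. For $\beta<\alpha$, comparing the maximization inequality at level $\alpha$ against the point $(x_{\beta,\varepsilon},y_{\beta,\varepsilon})$ and at level $\beta$ against $(x_{\alpha,\varepsilon},y_{\alpha,\varepsilon})$ and adding, one obtains the standard monotonicity estimate forcing $\alpha\mapsto\alpha\Psi(x_{\alpha,\varepsilon},y_{\alpha,\varepsilon})$ to have a finite limit and $\beta\Psi(x_{\beta},y_{\beta})\ge\alpha\Psi(x_\alpha,y_\alpha)$-type control; combined with boundedness of $\alpha\Psi(x_\alpha,y_\alpha)$ from (b) and the fact that all limit points are diagonal (where $\Psi=0$), lower semi-continuity of $\Psi$ gives $\lim_{\alpha\to\infty}\alpha\Psi(x_{\alpha,\varepsilon},y_{\alpha,\varepsilon})=0$. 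The step I expect to require the most care is (b)'s ``all limit points are diagonal'' claim: one must argue that if $z\ne z'$ then $\Psi_{\alpha_k}(x_{\alpha_k},y_{\alpha_k})\to\infty$ (using ($\Psi$a) monotonicity and a neighbourhood where $\Psi_\alpha$ is bounded below, together with continuity of the extensions), contradicting the uniform bound on $\Phi_{\alpha_k}$; making this rigorous requires combining the pointwise divergence in ($\Psi$a) with a uniform-on-compacts lower bound, which is the technical heart of the proof. The rest is bookkeeping with semi-continuity.
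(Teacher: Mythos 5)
Your proposal follows essentially the same route as the paper: attainment of the supremum by restricting to a compact set via the level sets of $\Upsilon$ plus semi-continuity; part (a) by the same uniform-in-$\alpha$ confinement; part (b) by showing that an off-diagonal limit point would force $\Psi_{\alpha}(x_{\alpha,\varepsilon},y_{\alpha,\varepsilon})\to\infty$ (the paper's clean way to do this, which you essentially describe, is to fix $\alpha_0$, use $\Psi_\alpha\geq\Psi_{\alpha_0}$ for $\alpha\geq\alpha_0$ together with lower semi-continuity of the single function $\Psi_{\alpha_0}$, and then let $\alpha_0\to\infty$ --- no uniform-on-compacts lower bound is needed); and part (c) by the standard comparison between penalization levels.

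Two small cautions. First, in (c) your concluding clause ``boundedness of $\alpha\Psi$ plus diagonal limit points plus lower semi-continuity of $\Psi$ gives $\alpha\Psi\to 0$'' does not suffice on its own: that reasoning only yields $\Psi(x_{\alpha,\varepsilon},y_{\alpha,\varepsilon})\to 0$, which is compatible with $\alpha\Psi$ staying bounded away from $0$. The correct engine --- which you do invoke in the preceding clause --- is the monotone convergence of the suprema $M_\alpha$ together with the inequality $M_{\alpha/2}\geq M_\alpha+\frac{\alpha}{2}\Psi(x_{\alpha,\varepsilon},y_{\alpha,\varepsilon})$, whence $\frac{\alpha}{2}\Psi(x_{\alpha,\varepsilon},y_{\alpha,\varepsilon})\leq M_{\alpha/2}-M_\alpha\to 0$; this is exactly the paper's argument and should carry the whole weight of (c). Second, your limit argument for the final claim of (b) shows that $z$ maximizes the $\varepsilon$-perturbed functional $\frac{u}{1-\varepsilon}-\frac{v}{1+\varepsilon}-\varepsilon(\cdots)\Upsilon$, which for fixed $\varepsilon>0$ is not literally $u(z)-v(z)=\sup(u-v)$; the paper itself silently omits this part of (b), so you are in good company, but as stated the step needs either an additional limit in $\varepsilon$ or a reformulation.
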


\begin{proof}
The proof essentially follows the one of Proposition 3.7 in \cite{CIL92}. \\
Fix $\varepsilon > 0$. As $\Upsilon$ is a good containment function, its level sets are compact. This property, combined with the boundedness of $u$ and $v$ and the non-negativity of $\Psi_\alpha$, implies that the supremum can be restricted to a compact set $K_\varepsilon \subseteq E \times E$ that is independent of $\alpha > 0$.  
As $u$ is upper semi-continuous, and $v$, $\Psi_\alpha$ and $\Upsilon$ are lower semi-continuous, the supremum is attained for some pair $(x_{\alpha,\varepsilon},y_{\alpha,\varepsilon}) \in K_\varepsilon$. This proves (a).

\smallskip

We proceed with the proof of (b). Let $(x_0,y_0)$ be a limit point of $\{(x_{\alpha,\varepsilon},y_{\alpha,\varepsilon})\}_{\alpha > 0}$ such that $x_0 \neq y_0$. Without loss of generality, assume that $(x_{\alpha,\varepsilon},y_{\alpha,\varepsilon}) \rightarrow (x_0,y_0)$. By property $(\Psi a)$, the map $\alpha \mapsto \Psi_\alpha$ is increasing. Thus, for all $\alpha_0$ we have that
\begin{equation*}
\liminf_{\alpha \rightarrow \infty} \Psi_\alpha(x_{\alpha,\varepsilon},y_{\alpha,\varepsilon}) \geq \liminf_{\alpha \rightarrow \infty} \Psi_{\alpha_0}(x_{\alpha,\varepsilon},y_{\alpha,\varepsilon}) \geq \Psi_{\alpha_0}(x_0,y_0)
\end{equation*}
by the lower semi-continuity of $\Psi_{\alpha_0}$. Thus, we conclude that 
\begin{equation*}
\liminf_{\alpha \rightarrow \infty} \Psi_\alpha(x_{\alpha,\varepsilon},y_{\alpha,\varepsilon}) = \infty
\end{equation*}
as $\lim_{\alpha \rightarrow \infty} \Psi_\alpha(x,y) = \infty$ for all $x \neq y$. This contradicts the boundedness of $u$ and $v$.

\smallskip

We now prove (c). Let us define the constants
\begin{align*}
M_\alpha & := \frac{u(x_{\alpha,\varepsilon})}{1-\varepsilon} - \frac{v(y_{\alpha,\varepsilon})}{1+\varepsilon} - \Psi_\alpha(x_{\alpha,\varepsilon},y_{\alpha,\varepsilon}) - \frac{\varepsilon}{1-\varepsilon}\Upsilon(x_{\alpha,\varepsilon}) -\frac{\varepsilon}{1+\varepsilon}\Upsilon(y_{\alpha,\varepsilon}) \\
& = \sup_{x,y \in E} \left\{\frac{u(x)}{1-\varepsilon} - \frac{v(y)}{1+\varepsilon} -  \Psi_\alpha(x,y)  - \frac{\varepsilon}{1-\varepsilon}\Upsilon(x) - \frac{\varepsilon}{1+\varepsilon}\Upsilon(y)\right\}.
\end{align*}
Observe that the sequence $M_\alpha$ is decreasing as $\alpha \mapsto \Psi_\alpha$ is increasing point-wise. Moreover, the limit $\lim_{\alpha \rightarrow \infty} M_\alpha$ exists, since functions $u$ and $v$ are bounded from below. 
For any $\alpha > 0$, we obtain
\begin{align*}
M_{\alpha/2} & \geq \frac{u(x_{\alpha,\varepsilon})}{1-\varepsilon} - \frac{v(y_{\alpha,\varepsilon})}{1+\varepsilon} - \Psi_{\alpha/2}(x_{\alpha,\varepsilon},y_{\alpha,\varepsilon}) - \frac{\varepsilon}{1-\varepsilon}\Upsilon(x_{\alpha,\varepsilon}) -\frac{\varepsilon}{1+\varepsilon}\Upsilon(y_{\alpha,\varepsilon}) \\
& \geq  M_\alpha + \Psi_\alpha \left( x_{\alpha,\varepsilon}, y_{\alpha, \varepsilon} \right) - \Psi_{\alpha/2} \left( x_{\alpha,\varepsilon}, y_{\alpha, \varepsilon} \right)\\
& \geq M_\alpha  +\frac{\alpha}{2} \, \Psi \left( x_{\alpha,\varepsilon}, y_{\alpha, \varepsilon} \right)\\
& \geq M_\alpha,
\end{align*}
that implies $\frac{\alpha}{2} \, \Psi \left( x_{\alpha,\varepsilon}, y_{\alpha, \varepsilon} \right) \to 0$, as $M_{\alpha/2}$ and $M_\alpha$ converge to the same limit.
\end{proof}

\begin{proposition} \label{proposition:comparison_conditions_on_H}
Fix $\lambda >0$, $h \in C_b(E)$ and consider $u$ and $v$ sub- and super-solution to $f - \lambda Hf = h$.

\smallskip

Let $\{\Psi_\alpha\}_{\alpha > 0}$ be a family of good penalization functions and $\Upsilon$ be a good containment function. Moreover, for every $\alpha,\varepsilon >0$ let $x_{\alpha,\varepsilon},y_{\alpha,\varepsilon} \in E$ be such that
\begin{multline} \label{eqn:comparison_principle_proof_choice_of_sequences}
\frac{u(x_{\alpha,\varepsilon})}{1-\varepsilon} - \frac{v(y_{\alpha,\varepsilon})}{1+\varepsilon} -  \Psi_\alpha(x_{\alpha,\varepsilon},y_{\alpha,\varepsilon}) - \frac{\varepsilon}{1-\varepsilon}\Upsilon(x_{\alpha,\varepsilon}) -\frac{\varepsilon}{1+\varepsilon}\Upsilon(y_{\alpha,\varepsilon}) \\
= \sup_{x,y \in E} \left\{\frac{u(x)}{1-\varepsilon} - \frac{v(y)}{1+\varepsilon} - \Psi_\alpha(x,y)  - \frac{\varepsilon}{1-\varepsilon}\Upsilon(x) - \frac{\varepsilon}{1+\varepsilon}\Upsilon(y)\right\}.
\end{multline}

Suppose that
\begin{multline}\label{condH:negative:liminf}
\liminf_{\varepsilon \rightarrow 0} \liminf_{\alpha \rightarrow \infty} H\left(x_{\alpha,\varepsilon},\nabla \Psi_\alpha(\cdot,y_{\alpha,\varepsilon})(x_{\alpha,\varepsilon})\right) \\
- H\left(y_{\alpha,\varepsilon},\nabla \Psi_\alpha(\cdot,y_{\alpha,\varepsilon})(x_{\alpha,\varepsilon})\right) \leq 0,
\end{multline}
then $u \leq v$. In other words: $f - \lambda H f = h$ satisfies the comparison principle. 
\end{proposition}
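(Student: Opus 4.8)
The plan is to run the classical doubling-of-variables argument (in the style of \cite{CIL92,FK06}) and to establish $u \le v$ by contradiction. Assume $\delta := \sup_{x \in E}\left(u(x) - v(x)\right) > 0$; the goal is to contradict hypothesis~\eqref{condH:negative:liminf}. By Lemma~\ref{lemma:viscosity_extension}, $u$ is a subsolution of $f - \lambda H_\dagger f = h$ and $v$ a supersolution of $f - \lambda H_\ddagger f = h$, so we may test $u$ against the function $x \mapsto (1-\varepsilon)\Psi_\alpha(x,y_{\alpha,\varepsilon}) + \varepsilon \Upsilon(x)$ (shifted by a constant) and $v$ against $y \mapsto -(1+\varepsilon)\Psi_\alpha(x_{\alpha,\varepsilon},y) - \varepsilon \Upsilon(y)$ — precisely the elements of $\cD(H_\dagger)$ and $\cD(H_\ddagger)$ introduced in Section~\ref{section:abstract_proof_of_comparison_principle}, with $x_{\alpha,\varepsilon},y_{\alpha,\varepsilon}$ the maximizers supplied by Lemma~\ref{lemma:doubling_lemma}.

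First I would fix $\varepsilon,\alpha > 0$. Since $x_{\alpha,\varepsilon}$ maximizes $x \mapsto u(x) - (1-\varepsilon)\Psi_\alpha(x,y_{\alpha,\varepsilon}) - \varepsilon\Upsilon(x)$ up to a constant, the constant sequence $x_n \equiv x_{\alpha,\varepsilon}$ is admissible in the definition of subsolution — the requirement $\sup_x(u-f)<\infty$ being immediate from $u$ bounded and $\Psi_\alpha,\Upsilon \ge 0$ — which, writing $p_{\alpha,\varepsilon} := \nabla\Psi_\alpha(\cdot,y_{\alpha,\varepsilon})(x_{\alpha,\varepsilon})$, gives $u(x_{\alpha,\varepsilon}) - h(x_{\alpha,\varepsilon}) \le \lambda\, H\!\left(x_{\alpha,\varepsilon},(1-\varepsilon)p_{\alpha,\varepsilon}+\varepsilon\nabla\Upsilon(x_{\alpha,\varepsilon})\right)$. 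Symmetrically, $y_{\alpha,\varepsilon}$ minimizes $y\mapsto v(y)+(1+\varepsilon)\Psi_\alpha(x_{\alpha,\varepsilon},y)+\varepsilon\Upsilon(y)$ up to a constant, and property~$(\Psi c)$ identifies $-\nabla\Psi_\alpha(x_{\alpha,\varepsilon},\cdot)(y_{\alpha,\varepsilon})$ with $p_{\alpha,\varepsilon}$, so the supersolution property yields $v(y_{\alpha,\varepsilon}) - h(y_{\alpha,\varepsilon}) \ge \lambda\, H\!\left(y_{\alpha,\varepsilon},(1+\varepsilon)p_{\alpha,\varepsilon}-\varepsilon\nabla\Upsilon(y_{\alpha,\varepsilon})\right)$.

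Now I would divide the first inequality by $1-\varepsilon$, the second by $1+\varepsilon$, and subtract. On the left-hand side, testing the doubled functional against the diagonal competitor $(x^*,x^*)$ for $x^*$ nearly optimal for $u-v$ (and using $\Psi_\alpha(x^*,x^*)=0$) shows $M_{\alpha,\varepsilon}\ge\delta/2$ for all $\alpha$ once $\varepsilon$ is small; since $\Psi_\alpha,\Upsilon\ge0$ this gives $\tfrac{u(x_{\alpha,\varepsilon})}{1-\varepsilon}-\tfrac{v(y_{\alpha,\varepsilon})}{1+\varepsilon}\ge\delta/2$, and as $h\in C_b(E)$ and $|x_{\alpha,\varepsilon}-y_{\alpha,\varepsilon}|\to0$ when $\alpha\to\infty$ (Lemma~\ref{lemma:doubling_lemma}(a)--(b)), the left-hand side stays $\ge\delta/2-O(\varepsilon)$ in the limit $\alpha\to\infty$. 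On the right-hand side I would invoke convexity of $p\mapsto H(x,p)$ — valid because every Hamiltonian in this paper is quadratic with non-negative leading coefficient — in the two forms
\[
H\!\left(x,(1-\varepsilon)p+\varepsilon q\right)\le(1-\varepsilon)H(x,p)+\varepsilon H(x,q),\qquad H\!\left(y,(1+\varepsilon)p-\varepsilon q\right)\ge(1+\varepsilon)H(y,p)-\varepsilon H(y,q),
\]
taking $p=p_{\alpha,\varepsilon}$ and $q=\nabla\Upsilon$ at the relevant point; since $H(z,\nabla\Upsilon(z))\le\sup_{z\in E}H(z,\nabla\Upsilon(z))<\infty$ by condition~($\Upsilon$d), the $\varepsilon$-contributions are $O(\varepsilon)$ uniformly in $\alpha$, and the surviving term is $\lambda\big(H(x_{\alpha,\varepsilon},p_{\alpha,\varepsilon})-H(y_{\alpha,\varepsilon},p_{\alpha,\varepsilon})\big)+O(\varepsilon)$. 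Taking $\liminf_{\alpha\to\infty}$, then $\liminf_{\varepsilon\to0}$, and applying \eqref{condH:negative:liminf} to the leftover Hamiltonian difference forces $\delta/2\le0$, the desired contradiction.

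The one genuinely delicate point is the mismatch in momenta: the viscosity inequalities produce $H$ at the \emph{perturbed} momenta $(1\mp\varepsilon)p_{\alpha,\varepsilon}\pm\varepsilon\nabla\Upsilon$, whereas \eqref{condH:negative:liminf} constrains only the \emph{unperturbed} $p_{\alpha,\varepsilon}=\nabla\Psi_\alpha(\cdot,y_{\alpha,\varepsilon})(x_{\alpha,\varepsilon})$, which is generically unbounded as $\alpha\to\infty$ (it equals $\alpha(x_{\alpha,\varepsilon}-y_{\alpha,\varepsilon})$ when $\Psi_\alpha(x,y)=\tfrac{\alpha}{2}|x-y|^2$). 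Bridging this gap is exactly where convexity of $H$ in $p$ together with the uniform bound $\sup_z H(z,\nabla\Upsilon(z))<\infty$ are indispensable; without convexity a different structural hypothesis on $H$ would be required. Everything else is routine: verifying that the two appeals to the viscosity definition are legitimate (finiteness or positivity of the relevant sup/inf again follows from boundedness of $u,v$ and $\Psi_\alpha,\Upsilon\ge0$) and tracking the $O(\varepsilon)$ errors so that they disappear after the double $\liminf$.
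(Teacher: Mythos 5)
Your proposal is correct and follows essentially the same route as the paper's proof: pass to the extensions $H_\dagger,H_\ddagger$ via Lemma \ref{lemma:viscosity_extension}, test at the maximizers from Lemma \ref{lemma:doubling_lemma}, use convexity of $p\mapsto H(z,p)$ to peel off the $\varepsilon\nabla\Upsilon$ perturbations (controlled by ($\Upsilon$d)), and let the $h$-difference vanish since all limit points of $(x_{\alpha,\varepsilon},y_{\alpha,\varepsilon})$ are diagonal. The only cosmetic difference is that you argue by contradiction from $\delta>0$ while the paper bounds $\sup_x(u(x)-v(x))$ directly by the three-term $\liminf$; your closing remark correctly identifies the momentum mismatch as the point where convexity is indispensable.
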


\begin{proof}
By Lemma \ref{lemma:viscosity_extension} we get immediately that $u$ is a sub-solution to $f - \lambda H_\dagger f = h$ and $v$ is a super-solution to $f - \lambda H_\ddagger f = h$ . Thus, it suffices to verify the comparison principle for the equations involving the extensions $H_\dagger$ and $H_\ddagger$.

\smallskip

Let $x_{\alpha,\varepsilon},y_{\alpha,\varepsilon} \in E$ such that  \eqref{eqn:comparison_principle_proof_choice_of_sequences} is satisfied. Then, for all $\alpha$ we obtain that
\begin{align}
& \sup_x u(x) - v(x) \notag\\
& = \lim_{\varepsilon \rightarrow 0} \sup_x \frac{u(x)}{1-\varepsilon} - \frac{v(x)}{1+\varepsilon} \notag\\
& \leq \liminf_{\varepsilon \rightarrow 0} \sup_{x,y} \frac{u(x)}{1-\varepsilon} - \frac{v(y)}{1+\varepsilon} -  \Psi_\alpha(x,y) - \frac{\varepsilon}{1-\varepsilon} \Upsilon(x) - \frac{\varepsilon}{1+\varepsilon}\Upsilon(y) \notag\\
& = \liminf_{\varepsilon \rightarrow 0} \frac{u(x_{\alpha,\varepsilon})}{1-\varepsilon} - \frac{v(y_{\alpha,\varepsilon})}{1+\varepsilon} - \Psi_\alpha(x_{\alpha,\varepsilon},y_{\alpha,\varepsilon}) - \frac{\varepsilon}{1-\varepsilon}\Upsilon(x_{\alpha,\varepsilon}) -\frac{\varepsilon}{1+\varepsilon}\Upsilon(y_{\alpha,\varepsilon}) \notag \\
& \leq \liminf_{\varepsilon \rightarrow 0} \frac{u(x_{\alpha,\varepsilon})}{1-\varepsilon} - \frac{v(y_{\alpha,\varepsilon})}{1+\varepsilon}, \label{eqn:basic_inequality_on_sub_super_sol}
\end{align}
as $\Upsilon$ and $\Psi_\alpha$ are non-negative functions. Since $u$ is a sub-solution to $f - \lambda H_\dagger f = h$ and $v$ is a super-solution to $f - \lambda H_\ddagger f = h$, we find by our particular choice of $x_{\alpha,\varepsilon}$ and $y_{\alpha,\varepsilon}$ that
\begin{align}
& u(x_{\alpha,\varepsilon}) - \lambda H\left(x_{\alpha,\varepsilon}, (1-\varepsilon)\nabla \Psi_\alpha(\cdot,y_{\alpha,\varepsilon})(x_{\alpha,\varepsilon}) + \varepsilon \nabla \Upsilon(x_{\alpha,\varepsilon})\right) \leq h(x_{\alpha,\varepsilon}), \label{eqn:ineq_comp_proof_1}\\
& v(y_{\alpha,\varepsilon}) - \lambda H\left(y_{\alpha,\varepsilon},-(1+\varepsilon)\nabla \Psi_\alpha(x_{\alpha,\varepsilon},\cdot)(y_{\alpha,\varepsilon}) - \varepsilon \nabla \Upsilon(y_{\alpha,\varepsilon})\right) \geq h(y_{\alpha,\varepsilon}).\label{eqn:ineq_comp_proof_2}
\end{align}
For all $z \in E$, the map $p \mapsto H(z,p)$ is convex. Thus, \eqref{eqn:ineq_comp_proof_1} implies that
\begin{multline} \label{eqn:ineq_comp_proof_1_convexity}
u(x_{\alpha,\varepsilon}) \leq h(x_{\alpha,\varepsilon}) + (1-\varepsilon) \lambda H(x_{\alpha,\varepsilon}, \nabla \Psi_\alpha(\cdot,y_{\alpha,\varepsilon})(x_{\alpha,\varepsilon}))  \\
+ \varepsilon \lambda H(x_{\alpha,\varepsilon},\nabla \Upsilon(x_{\alpha,\varepsilon})).
\end{multline}
For the second inequality, first note that because $\Psi_\alpha$ are good penalization functions, we have $- ( \nabla \Psi_\alpha(x_{\alpha,\varepsilon},\cdot))(y_{\alpha,\varepsilon}) = \nabla \Psi_\alpha(\cdot, y_{\alpha,\varepsilon})(x_{\alpha,\varepsilon})$. Next, we need a more sophisticated bound using the convexity of $H$:
\begin{align*}
& H(y_{\alpha,\varepsilon}, \nabla \Psi_\alpha(\cdot, y_{\alpha,\varepsilon})(x_{\alpha,\varepsilon})) \\
& \leq \frac{1}{1+\varepsilon} H(y_{\alpha,\varepsilon},(1+\varepsilon)\nabla \Psi_\alpha(\cdot, y_{\alpha,\varepsilon})(x_{\alpha,\varepsilon}) - \varepsilon \nabla \Upsilon(y_{\alpha,\varepsilon})) + \frac{\varepsilon}{1+\varepsilon} H(y_{\alpha,\varepsilon}, \nabla \Upsilon(y_{\alpha,\varepsilon})).
\end{align*}
Thus, \eqref{eqn:ineq_comp_proof_2} gives us
\begin{equation} \label{eqn:ineq_comp_proof_2_convexity}
v(y_{\alpha,\varepsilon}) \geq h(y_{\alpha,\varepsilon}) + \lambda (1+\varepsilon) H(y_{\alpha,\varepsilon},\nabla\Psi_\alpha(\cdot,y_{\alpha,\varepsilon})(x_{\alpha,\varepsilon})) - \varepsilon \lambda H(y_{\alpha,\varepsilon},\nabla \Upsilon(y_{\alpha,\varepsilon})).
\end{equation}
By combining \eqref{eqn:basic_inequality_on_sub_super_sol} with \eqref{eqn:ineq_comp_proof_1_convexity} and \eqref{eqn:ineq_comp_proof_2_convexity}, we find
\begin{align} 
& \sup_x u(x) - v(x) \nonumber\\
& \leq \liminf_{\varepsilon \rightarrow 0} \liminf_{\alpha \rightarrow \infty} \left\{ \frac{h(x_{\alpha,\varepsilon})}{1 - \varepsilon} - \frac{h(y_{\alpha,\varepsilon})}{1+\varepsilon} \right.  \label{eqn:eqn:comp_proof_final_bound:line1}\\
& \qquad + \frac{\varepsilon}{1-\varepsilon}H(x_{\alpha,\varepsilon}, \nabla \Upsilon(x_{\alpha,\varepsilon})) + \frac{\varepsilon}{1+\varepsilon}H(y_{\alpha,\varepsilon}, \nabla\Upsilon(y_{\alpha,\varepsilon})) \label{eqn:eqn:comp_proof_final_bound:line2}\\
& \left. \qquad +  \lambda \left[H(x_{\alpha,\varepsilon},\nabla\Psi_\alpha(\cdot,y_{\alpha,\varepsilon})(x_{\alpha,\varepsilon})) - H(y_{\alpha,\varepsilon},\nabla\Psi_\alpha(\cdot,y_{\alpha,\varepsilon})(x_{\alpha,\varepsilon}))\right] \vphantom{\sum} \right\}.\label{eqn:eqn:comp_proof_final_bound:line3}
\end{align}
The term \eqref{eqn:eqn:comp_proof_final_bound:line3} vanishes by assumption. Now observe that, for fixed $\varepsilon$ and varying $\alpha$, the sequence $(x_{\alpha,\varepsilon},y _{\alpha,\varepsilon})$ takes its values in a compact set and, hence, admits converging subsequences. All these subsequences converge to points of the form $(z,z)$. Therefore, as $\alpha \rightarrow \infty$, we find
\[
\liminf_{\varepsilon \rightarrow 0} \liminf_{\alpha \rightarrow \infty}  \frac{h(x_{\alpha,\varepsilon})}{1 - \varepsilon} - \frac{h(y_{\alpha,\varepsilon})}{1+\varepsilon} \leq \liminf_{\varepsilon \rightarrow 0} \vn{h} \frac{2\varepsilon}{1-\varepsilon^2} = 0,
\]
giving that also the term in \eqref{eqn:eqn:comp_proof_final_bound:line1} converges to zero. The term in \eqref{eqn:eqn:comp_proof_final_bound:line2} vanishes as well, due to the uniform bounds on $H(z,\nabla \Upsilon(z))$ by property ($\Upsilon$d).

\smallskip

We conclude that the comparison principle holds for $f - \lambda H f = h$.
\end{proof}

\subsection{Compact containment and the large deviation principle} \label{section:appendix_ldp}

To connect the Hamilton-Jacobi equation to the large deviation principle, we introduce some additional concepts. Fix some $d \geq 1$. In this section, we assume that $E$ is a \textit{closed} subset of $\bR^d$ that is contained in the $\bR^d$-closure of its $\bR^d$-interior. Additionally, we have closed subspaces $E_n \subseteq E$ for all $n$ and assume that $E = \lim_{n \to \infty} E_n$, i.e. for every $x \in E$ there exist $x_n \in E_n$ such that $x_n \rightarrow x$. We consider the following notion of operator convergence. 

\begin{definition}
Suppose that for each $n$ we have an operator $(B_n,\cD(B_n))$, $B_n : \cD(B_n) \subseteq C_b(E_n) \rightarrow C_b(E_n)$. The \textit{extended limit} $ex-\lim_n B_n$ is defined by the collection $(f,g) \in C_b(E) \times C_b(E)$ such that there exist $f_n \in \cD(B_n)$ satisfying
\begin{equation} \label{eqn:convergence_condition}
\lim_{n \rightarrow \infty} \sup_{x \in K \cap E_n} \left|f_n(x) - f(x)\right| + \left|B_n f_n(x) - g(x)\right| = 0.
\end{equation}
For an operator $(B,\cD(B))$, we write $B \subseteq ex-\lim_n B_n$ if the graph $\{(f,Bf) \, | \, f \in \cD(B) \}$ of $B$ is a subset of $ex-\lim_n B_n$.
\end{definition}

\begin{remark}
The notion of extended limit can be generalized further, e.g. for limiting spaces like in the previous two sections. Such abstract generalizations are carried out in Definition 2.5 and Condition 7.11 of \cite{FK06}. Our definition for a \textit{closed} limiting space $E$ is the simplest version of this abstract machinery. 
\end{remark}

\begin{assumption} \label{assumption:LDP_assumption}
Fix some $d \geq 1$. Let $E$ be a closed subset of $\bR^d$ that is contained in the $\bR^d$-closure of its $\bR^d$-interior and let $E_n$ be closed subsets of $E$ such that $E = \lim_{n \rightarrow \infty} E_n$. 

Assume that for each $n \geq 1$, we have $A_n \subseteq C_b(E_n) \times C_b(E_n)$ and existence and uniqueness holds for the $D_{E_n}(\bR^+)$ martingale problem for $(A_n,\mu)$ for each initial distribution $\mu \in \cP(E_n)$. Letting $\PR_{y}^n \in \cP(D_{E_n}(\bR^+))$ be the solution to $(A_n,\delta_y)$, the mapping $y \mapsto \PR_y^n$ is measurable for the weak topology on $\cP(D_{E_n}(\bR^+))$. Let $X_n$ be the solution to the martingale problem for $A_n$ and set
\begin{equation*}
H_n f = \frac{1}{r(n)} e^{-r(n)f}A_n e^{r(n)f} \qquad e^{r(n)f} \in \cD(A_n),
\end{equation*}
for some sequence of speeds $\{r(n)\}_{n \geq 1}$, with $\lim_{n \rightarrow \infty} r(n) = \infty$.

Suppose that we have an operator $H : \cD(H) \subseteq C_b(E) \rightarrow C_b(E)$ with $\cD(H) = C^2_c(E)$ of the form $Hf(x) = H(x,\nabla f(x))$ which satisfies $H \subseteq ex-\lim H_n$.

\end{assumption}

\begin{proposition} \label{proposition:exp_compact_containment}
Suppose Assumption \ref{assumption:LDP_assumption} is satisfied and assume that $\Upsilon$ is a good containment function. Suppose that the sequence $\{X_n(0)\}_{n \geq 1}$ is exponentially tight with speed $\{r(n)\}_{n \geq 1}$.

Then the sequence $\{X_n\}_{n \geq 1}$ satisfies the exponential compact containment condition with speed $\{r(n)\}_{n \geq 1}$: for every $T > 0$ and $a \geq 0$, there exists a compact set $K_{a,T} \subseteq E$ such that
\begin{equation*}
\limsup_{n \rightarrow \infty} \frac{1}{r(n)} \log \PR\left[X_n(t) \notin K_{a,T} \text{ for some } t \leq T \right] \leq -a.
\end{equation*}
\end{proposition}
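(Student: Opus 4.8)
The plan is to derive the exponential compact containment condition from a Lyapunov-type argument using the good containment function $\Upsilon$ together with the supermartingale structure of exponential functionals of Markov processes. First I would fix $T > 0$ and $a \geq 0$ and consider the process $Y_n(t) := e^{r(n) \Upsilon(X_n(t))}$; the key observation is that for the nonlinear generator $H_n$ we have by construction $r(n)^{-1} e^{-r(n)\Upsilon} A_n e^{r(n)\Upsilon} = H_n \Upsilon$, and since $H \subseteq ex\text{-}\lim_n H_n$ and $\Upsilon$ is twice continuously differentiable with $\sup_{z \in E} H(z,\nabla\Upsilon(z)) < \infty$ by property $(\Upsilon\mathrm{d})$, one can find approximating functions $\Upsilon_n$ with $A_n e^{r(n)\Upsilon_n}$ under control, so that $\sup_n \sup_{x \in E_n} H_n \Upsilon_n(x) =: C < \infty$. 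Technically one should truncate $\Upsilon$ outside a compact set so that $e^{r(n)\Upsilon}$ genuinely lies in $\cD(A_n)$; this is where the assumption that $\Upsilon$ has compact sublevel sets and the structure $\cD(H) = C_c^2(E)$ enter.

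Next I would use Dynkin's formula / the martingale problem for $A_n$: the process
\begin{equation*}
M_n(t) := e^{r(n)\Upsilon_n(X_n(t))} - e^{r(n)\Upsilon_n(X_n(0))} - \int_0^t A_n e^{r(n)\Upsilon_n}(X_n(s)) \dd s
\end{equation*}
is a martingale, and since $A_n e^{r(n)\Upsilon_n} = r(n) e^{r(n)\Upsilon_n} H_n\Upsilon_n \leq r(n) C\, e^{r(n)\Upsilon_n}$, an application of Gronwall's inequality in expectation shows that $e^{-r(n)C t} e^{r(n)\Upsilon_n(X_n(t))}$ is a supermartingale (up to the truncation correction). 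Then Doob's maximal inequality for nonnegative supermartingales gives
\begin{equation*}
\PR\left[\sup_{t \leq T} \Upsilon_n(X_n(t)) \geq \ell \right] \leq e^{-r(n)(\ell - CT)} \, \bE\left[e^{r(n)\Upsilon_n(X_n(0))}\right].
\end{equation*}
Taking $\frac{1}{r(n)}\log$ of both sides, using $\Upsilon_n \to \Upsilon$ uniformly on compacts and the exponential tightness of $\{X_n(0)\}_{n\geq1}$ (which controls $\limsup_n r(n)^{-1}\log \bE[e^{r(n)\Upsilon(X_n(0))}]$ by a finite constant $D$, after first checking $\Upsilon$ is bounded on the sublevel sets provided by exponential tightness and arguing the tail contribution is negligible), one obtains $\limsup_n r(n)^{-1}\log \PR[\sup_{t\leq T}\Upsilon(X_n(t)) \geq \ell] \leq -(\ell - CT) + D$. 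Choosing $\ell = \ell_{a,T}$ large enough that $\ell - CT - D \geq a$ and setting $K_{a,T} := \{x \in E : \Upsilon(x) \leq \ell_{a,T}\}$, which is compact by $(\Upsilon\mathrm{c})$, yields the claim.

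The main obstacle is the truncation/approximation bookkeeping: $e^{r(n)\Upsilon}$ need not lie in $\cD(A_n)$, so one must work with a modified containment function $\Upsilon_n^{(R)}$ that agrees with $\Upsilon$ on a ball of radius $R$ and is constant outside, verify that the resulting generator bound $H_n \Upsilon_n^{(R)} \leq C$ holds \emph{uniformly in $n$} (for fixed $R$) by exploiting $H \subseteq ex\text{-}\lim_n H_n$ and uniform convergence on compacts, and then argue that on the event $\{\sup_{t\leq T}\Upsilon(X_n(t)) < R\}$ the truncated and untruncated processes coincide, so the estimate transfers. Controlling the exponential moment of the initial condition against $\Upsilon$ — turning exponential tightness, which is phrased in terms of arbitrary compact sets, into a bound on $\bE[e^{r(n)\Upsilon(X_n(0))}]$ — is the other delicate point, handled by a stratification of $E$ into the sublevel sets of $\Upsilon$ and summing geometric tails. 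Everything else is a routine application of Dynkin's formula, Gronwall, and Doob.
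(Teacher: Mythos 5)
Your argument is essentially the paper's: you have re-derived, via the exponential supermartingale $e^{-r(n)Ct}e^{r(n)\Upsilon_n(X_n(t))}$ and Doob's maximal inequality, exactly the estimate that the paper imports as Lemma 4.22 of Feng--Kurtz, and you combine it with the same truncated containment function $\theta\circ\Upsilon$, the same uniform generator bound coming from $(\Upsilon\mathrm{d})$ and $H\subseteq ex\text{-}\lim_n H_n$, and the same use of exponential tightness of $X_n(0)$ and compactness of the sublevel sets of $\Upsilon$. The delicate points you flag (truncation so that $e^{r(n)\Upsilon_n}\in\cD(A_n)$ and so that the initial exponential moment is finite) are precisely the points the paper's proof handles with $\theta$ and with the separate $\PR[X_n(0)\notin K]$ term in the cited lemma, so the proposal is correct and follows the same route.
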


In the proof of this proposition, we apply Lemma 4.22 from \cite{FK06}. We recall it here for the sake of readability.

\begin{lemma}[Lemma 4.22 in \cite{FK06}] \label{lemma:compact_containment_FK}
Let $X_n$ be solutions of the martingale problem for $A_n$ and suppose that $\{X_n(0)\}_{n \geq 1}$ is exponentially tight with speed $\{r(n)\}_{n \geq 1}$. Let $K$ be compact and let $G \supseteq K$ be open. For each $n$, suppose we have $(f_n,g_n) \in H_n$. Define
\begin{equation*}
\beta(K,G) := \liminf_{n \rightarrow \infty} \left( \inf_{x \in G^c} f_n(x) - \sup_{x \in K} f_n(x)\right) \mbox{ and } \gamma(G) := \limsup_{n \rightarrow \infty} \sup_{x \in G} g_n(x).
\end{equation*}
Then
\begin{multline} \label{eqn:compact_containment_bound}
\limsup_{n \rightarrow \infty} \frac{1}{r(n)} \log \PR\left[X_n(t) \notin G \text{ for some } t \leq T \right] \\
\leq \max \left\{-\beta(K,G) + T \gamma(G), \limsup_{n\rightarrow \infty} \PR\left[X_n(0) \notin K\right] \right\}.
\end{multline}
\end{lemma}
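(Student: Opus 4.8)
\emph{Strategy.} The plan is the classical exponential‑tilting (Doob $h$‑transform) argument: from the pair $(f_n,g_n)\in H_n$ one builds an exponential supermartingale associated with $X_n$ and then controls the exit probability of $G$ by optional stopping at the first exit time. Since $(f_n,g_n)\in H_n$, by definition $u_n:=e^{r(n)f_n}$ lies in $\cD(A_n)$ and $A_nu_n=r(n)\,g_n\,u_n$; as $X_n$ solves the martingale problem for $A_n$, the process
\[
N_n(t):=u_n(X_n(t))-u_n(X_n(0))-r(n)\int_0^t g_n(X_n(s))\,u_n(X_n(s))\,\dd s
\]
is a martingale. Integration by parts applied to $t\mapsto u_n(X_n(t))\exp\{-r(n)\int_0^t g_n(X_n(s))\,\dd s\}$ (whose second factor is continuous and of finite variation) then identifies
\[
Z_n(t):=\exp\left\{r(n)\Big(f_n(X_n(t))-f_n(X_n(0))-\int_0^t g_n(X_n(s))\,\dd s\Big)\right\}
\]
with $1+u_n(X_n(0))^{-1}\int_0^{\cdot}e^{-r(n)\int_0^s g_n(X_n(u))\,\dd u}\,\dd N_n(s)$, so $Z_n$ is a strictly positive local martingale with $Z_n(0)=1$, hence a supermartingale; in particular $\bE[Z_n(\sigma)]\le1$ for every bounded stopping time $\sigma$.

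\emph{Exit time and optional stopping.} Set $\tau_n:=\inf\{t\ge0:X_n(t)\notin G\}$. Since $G$ is open and the paths are c\`adl\`ag, right‑continuity forces $X_n(\tau_n)\in G^c$ on $\{\tau_n\le T\}$, and $\{\exists\,t\le T:X_n(t)\notin G\}=\{\tau_n\le T\}$. I would first split
\[
\PR[\tau_n\le T]\le\PR[X_n(0)\notin K]+\PR[\tau_n\le T,\ X_n(0)\in K].
\]
On the event $\{\tau_n\le T,\ X_n(0)\in K\}$, using $X_n(\tau_n)\in G^c$, $X_n(0)\in K$, $\tau_n\le T$ and $X_n(s)\in G$ for $s<\tau_n$, one has $f_n(X_n(\tau_n))\ge\inf_{G^c}f_n$, $f_n(X_n(0))\le\sup_Kf_n$ and $\int_0^{\tau_n}g_n(X_n(s))\,\dd s\le T\sup_Gg_n$, so that
\[
Z_n(\tau_n\wedge T)=Z_n(\tau_n)\ \ge\ \exp\left\{r(n)\big[(\inf_{G^c}f_n-\sup_Kf_n)-T\sup_Gg_n\big]\right\}.
\]
Applying $\bE[Z_n(\tau_n\wedge T)]\le1$ restricted to this event yields
\[
\PR[\tau_n\le T,\ X_n(0)\in K]\le\exp\left\{-r(n)\big[(\inf_{G^c}f_n-\sup_Kf_n)-T\sup_Gg_n\big]\right\}.
\]

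\emph{Passing to the limit.} Substituting this bound into the split, applying $\tfrac1{r(n)}\log(a+b)\le\tfrac{\log2}{r(n)}+\max\{\tfrac1{r(n)}\log a,\tfrac1{r(n)}\log b\}$, and letting $n\to\infty$ (so $\tfrac{\log2}{r(n)}\to0$), one obtains \eqref{eqn:compact_containment_bound} upon using $\limsup_n\big(-(\inf_{G^c}f_n-\sup_Kf_n)+T\sup_Gg_n\big)\le-\beta(K,G)+T\gamma(G)$ and the elementary inequality $\tfrac1{r(n)}\log\PR[X_n(0)\notin K]\le\PR[X_n(0)\notin K]$. The exponential tightness of $\{X_n(0)\}_{n\ge1}$ plays no role in this particular estimate; it is included because it is what makes the $\PR[X_n(0)\notin K]$ term exploitable downstream (e.g.\ in Proposition \ref{proposition:exp_compact_containment}, where $K$ is then chosen so that $\limsup_n\tfrac1{r(n)}\log\PR[X_n(0)\notin K]\le-a$).

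\emph{Main obstacle.} The one substantive step is the construction of $Z_n$: establishing, purely from the martingale‑problem characterization and for the possibly jump‑type generators $A_n$, that $Z_n$ is a nonnegative supermartingale with $Z_n(0)=1$. Everything after that is routine optional stopping and bookkeeping. A harmless caveat is the sign of the time integral: the bound $\int_0^{\tau_n}g_n(X_n(s))\,\dd s\le T\sup_Gg_n$ is immediate when $\sup_Gg_n\ge0$ and otherwise only costs replacing $\gamma(G)$ by $\gamma(G)\vee0$, which does not affect any application.
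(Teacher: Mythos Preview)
The paper does not prove this lemma: it is quoted from \cite{FK06} (Lemma~4.22) purely for readability, with the explicit remark ``We recall it here for the sake of readability,'' and no proof is given. So there is no paper proof to compare against.

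That said, your argument is the standard one and is essentially how the result is established in \cite{FK06}: build the exponential supermartingale $Z_n$ from the relation $A_n e^{r(n)f_n}=r(n)g_n e^{r(n)f_n}$, apply optional stopping at the first exit time of $G$, and split according to $\{X_n(0)\in K\}$. Your passage to the limit is correct, including the trivial but necessary observation that $\tfrac{1}{r(n)}\log p\le 0\le p$ for $p\in[0,1]$, which is what produces the raw probability $\limsup_n\PR[X_n(0)\notin K]$ on the right-hand side rather than its large-deviation rate (this somewhat unusual form is indeed how the bound is stated in \cite{FK06}). Your remark that exponential tightness of the initial data is not actually used in the estimate itself, only downstream, is also accurate.

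The sign caveat you flag is real but harmless: one can simply bound $\int_0^{\tau_n} g_n(X_n(s))\,\dd s\le \tau_n\,(\sup_G g_n)^+\le T\,(\sup_G g_n)^+$, which gives the stated conclusion with $\gamma(G)$ replaced by $\gamma(G)\vee 0$; since the left-hand side of \eqref{eqn:compact_containment_bound} is always $\le 0$, this changes nothing in any application.
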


Note that in the case the closure of $G$ is compact, this result is suitable for proving the compact containment condition. This is what we will use below.

\begin{proof}[Proof of Proposition \ref{proposition:exp_compact_containment}]
Fix $a \geq 0$ and $T > 0$. We will construct a compact set $K'$ such that
\begin{equation*}
\limsup_{n \rightarrow \infty} \frac{1}{r(n)} \log \PR\left[X_n(t) \notin K' \text{ for some } t \leq T \right] \leq -a.
\end{equation*}
As $X_n(0)$ is exponentially tight with speed $\{r(n)\}_{n \geq 1}$, we can find a sufficiently large $R \geq 0$ so that 
\begin{equation*}
\limsup_{n\rightarrow \infty} \frac{1}{r(n)} \log \PR\left[X_n(0) \notin \overline{B(x_0,R)}\right] \leq - a,
\end{equation*}
where $x_0$ is a point such that $\Upsilon(x_0) = 0$ and $\overline{B(x_0,R)}$ is the closed ball with radius $R$  and center $x_0$. Thus, by Lemma \ref{lemma:compact_containment_FK} it suffices to find $(f_n,g_n) \in H_n$, a compact $K$ and an open set $G$ such that $-\beta(K,G) + T\gamma(G) \leq - a$.

\smallskip

Set $\gamma := \sup_{z} H(z,\nabla \Upsilon(z))$ and $c_1 := \sup_{z \in \overline{B(x_0,R)}} \Upsilon(z)$. Observe that $\gamma  < \infty$ by assumption ($\Upsilon$d) and $c_1 < \infty$ by compactness. Now choose $c_2$ such that
\begin{equation} \label{eqn:proof_compact_containment_choice_c2}
-[c_2 - c_1] + T\gamma = -a
\end{equation}
and take $K = \{z \in E \, | \, \Upsilon(z) \leq c_1\}$ and $G = \{z \in E \, | \, \Upsilon(z) < c_2\}$.

\smallskip

Let $\theta : [0,\infty) \rightarrow [0,\infty)$ be a compactly supported smooth function with the property that $\theta(z) = z$ for $z \leq c_2$. For each $n$, define $f_n := \theta \circ \Upsilon$ and $g_n := H_n f_n$. By Assumption \ref{assumption:LDP_assumption}, $g_n \rightarrow H f$ in the sense of \eqref{eqn:convergence_condition} and moreover, by construction $\beta(K,G) = c_2 - c_1$ and $\gamma(G) = \gamma$. Thus by \eqref{eqn:proof_compact_containment_choice_c2} and Lemma \ref{lemma:compact_containment_FK} we obtain
\begin{equation*}
\limsup_{n \rightarrow \infty} \frac{1}{r(n)} \log \PR\left[X_n(t) \notin G \text{ for some } t \leq T \right] \leq -a
\end{equation*}
and the compact containment condition holds with $K_{a,T} = \overline{G}$.
\end{proof}

\begin{theorem}[Large deviation principle] \label{theorem:Abstract_LDP}
Suppose Assumption \ref{assumption:LDP_assumption} is satisfied and assume that $\Upsilon$ is a good containment function for $H$. 
Then we have the following result.

Suppose that for all $\lambda > 0$ and $h \in C_b(E)$ the comparison principle holds for $f - \lambda H f = h$. And suppose that the sequence $\{X_n(0)\}_{n \geq 1}$ satisfies the large deviation principle with speed $\{r(n)\}_{n \geq 1}$ on $E$ with good rate function $I_0$. 

Then the large deviation principle with speed $\{r(n)\}_{n \geq 1}$ holds for $\{X_n\}_{n \geq 1}$ on $D_E(\bR^+)$ with good rate function $I$. Additionally, suppose that the map $p \mapsto H(x,p)$ is convex and differentiable for every $x$ and that the map $(x,p) \mapsto \frac{\dd}{\dd p} H(x,p)$ is continuous. Then the rate function $I$ is given by
\begin{equation*}
I(\gamma) = \begin{cases}
I_0(\gamma(0)) + \int_0^\infty \cL(\gamma(s),\dot{\gamma}(s)) \dd s & \text{if } \gamma \in \cA\cC, \\
\infty & \text{otherwise},
\end{cases}
\end{equation*}
where $\cL : E \times \bR^d \rightarrow \bR$ is defined by $\cL(x,v) = \sup_p \ip{p}{v} -H(x,p)$.
\end{theorem}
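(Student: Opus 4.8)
The plan is to assemble the abstract results of Feng and Kurtz \cite{FK06}, whose hypotheses have been pre-packaged in Assumption~\ref{assumption:LDP_assumption} and in the statement above. I would organize the argument into four steps: exponential tightness, construction of a limiting semigroup from the comparison principle, passage from semigroup convergence to the path-space large deviation principle, and finally the variational identification of the rate function under the extra regularity of $H$ in the momentum variable.

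First I would establish exponential tightness of $\{X_n\}_{n \geq 1}$ on $D_E(\bR^+)$. Since $\{X_n(0)\}$ satisfies a large deviation principle with a good rate function, it is exponentially tight with speed $\{r(n)\}$, so Proposition~\ref{proposition:exp_compact_containment} yields the exponential compact containment condition. Combined with the operator convergence $H \subseteq ex\text{-}\lim_n H_n$ and the fact that $\cD(H) = C^2_c(E)$ is rich enough to control the modulus of continuity of the trajectories, the Feng-Kurtz criterion for exponential tightness of the paths applies (cf.\ Corollary~4.17 and Theorem~4.20 in \cite{FK06}). Next I would produce the limiting nonlinear semigroup. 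For fixed $\lambda > 0$ and $h \in C_b(E)$, the resolvent functions $R_n(\lambda)h$ solving $f - \lambda H_n f = h$ exist because the martingale problems for $A_n$ are well posed; a standard stability argument for viscosity solutions shows that any subsequential limit of $R_n(\lambda)h$ is a viscosity solution of $f - \lambda H f = h$, so existence holds, while the comparison principle gives uniqueness. Writing $R(\lambda)h$ for this unique solution, I would check the resolvent identity and dissipativity so that the Crandall--Liggett theorem applies: $H$ admits an extension generating a strongly continuous contraction semigroup $V(t)$ on the relevant closed subspace of $C_b(E)$. The operator convergence together with the comparison principle then gives $V_n(t)f \to V(t)f$ for all $f \in C_b(E)$, where $V_n(t)f(x) = r(n)^{-1}\log \bE[e^{r(n)f(X_n(t))} \mid X_n(0) = x]$ (cf.\ Theorem~7.18 and Chapter~8 of \cite{FK06}).

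I would then convert semigroup convergence into the path-space LDP. Convergence $V_n(t) \to V(t)$ together with the LDP for $X_n(0)$ with rate function $I_0$ yields, by a Bryc/Varadhan-type argument, the large deviation principle for the finite-dimensional distributions of $X_n$ with a rate function determined iteratively through $V(t)$ and with initial cost $I_0(\gamma(0))$; combined with the exponential tightness from the first step, Puhalskii's projective-limit theorem (Theorem~4.28 in \cite{FK06}) upgrades this to the full LDP on $D_E(\bR^+)$ with a good rate function $I$. Finally, under the assumptions that $p \mapsto H(x,p)$ is convex and differentiable and $(x,p) \mapsto \partial_p H(x,p)$ is continuous, I would identify $I$ in Lagrangian form. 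Introduce the variational (Nisio) semigroup $\mathbf{V}(t)f(x) = \sup\{ f(\gamma(t)) - \int_0^t \cL(\gamma(s),\dot{\gamma}(s))\,\dd s : \gamma \in \cA\cC,\ \gamma(0) = x\}$ and the associated variational resolvent. One shows the variational resolvent is a viscosity solution of $f - \lambda H f = h$; by uniqueness it coincides with $R(\lambda)h$, hence $\mathbf{V}(t) = V(t)$. The regularity of $H$ in $p$ ensures $\cL$ is lower semicontinuous with superlinear growth and that the associated action has compact sublevel sets, which is precisely what is needed to rewrite the dynamic part of $I$ as $\int_0^\infty \cL(\gamma(s),\dot{\gamma}(s))\,\dd s$ on absolutely continuous paths and $+\infty$ otherwise (cf.\ Theorems~8.14 and~8.27 in \cite{FK06}).

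I expect the main obstacle to be this last step: verifying that the variational resolvent is genuinely a viscosity solution of the Hamilton--Jacobi equation, and controlling the action functional so that $I$ is a good rate function in Lagrangian form. This is where the control-theoretic arguments and the boundary hypothesis on $E$ — that $E$ is contained in the $\bR^d$-closure of its $\bR^d$-interior, so that gradients in $E$ are well defined and paths can be suitably approximated — enter in an essential way. A secondary delicate point is checking the range condition and dissipativity needed for the Crandall--Liggett generation theorem, since $H$ is only densely defined on $C^2_c(E)$; here one leans again on the existence of the approximating resolvents $R_n(\lambda)$ and on the comparison principle to pin down $R(\lambda)$ uniquely.
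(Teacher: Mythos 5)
Your overall route is the same as the paper's: the authors also prove this theorem by invoking the Feng--Kurtz machinery, citing Theorem 7.18 of \cite{FK06} (with $H_\dagger = H_\ddagger = H$) for the path-space LDP and Theorem 8.27 / Corollary 8.28 of \cite{FK06} for the Lagrangian representation. What you have done is unpack the internal structure of those cited theorems (exponential tightness, Crandall--Liggett generation, semigroup convergence, projective limit, Nisio semigroup identification), which is accurate as a description of the machinery but is not where the actual work of this proof lies.

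The one concrete step the paper performs --- and which your proposal leaves unresolved and partly misattributes --- is the verification of Condition 8.9.(4) in \cite{FK06}: for each compact $K$, $T>0$ and $M<\infty$ there must be a compact $\hat K$ containing every $\gamma(t)$, $t\le T$, for absolutely continuous $\gamma$ with $\gamma(0)\in K$ and $\int_0^T \cL(\gamma(s),\dot\gamma(s))\,\dd s\le M$. This is the only condition not already covered by the compact-state-space treatment in \cite{Kr16b}, and it does \emph{not} follow from convexity and differentiability of $p\mapsto H(x,p)$, as you suggest when you claim the regularity of $H$ in $p$ gives compact sublevel sets of the action: that regularity controls $\cL$ in the velocity variable only, while the issue here is escape to infinity in the \emph{space} variable on the noncompact set $E$. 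The correct source of compactness is the good containment function: by the Fenchel--Young inequality,
\begin{equation*}
\Upsilon(\gamma(t)) = \Upsilon(\gamma(0)) + \int_0^t \ip{\nabla\Upsilon(\gamma(s))}{\dot{\gamma}(s)} \dd s
\leq \sup_{y\in K}\Upsilon(y) + M + T\sup_{z}H(z,\nabla\Upsilon(z)) =: C,
\end{equation*}
which is finite by property ($\Upsilon$d), so one may take $\hat K = \{z\in E \mid \Upsilon(z)\le C\}$, compact by ($\Upsilon$c). You flagged ``controlling the action functional'' as an obstacle but did not supply this argument; since it is essentially the entire non-citation content of the proof, you should include it explicitly.
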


\begin{proof}
The large deviation result follows from Theorem 7.18 in \cite{FK06}. Referring to the notation therein, we are using $H_\dagger = H_\ddagger = H$.

\smallskip

The representation of the rate function in terms of the Lagrangian can be carried out by using Theorem 8.27 and Corollary 8.28 in \cite{FK06}. See for example the Section 10.3 in \cite{FK06} for this representation in the setting of Freidlin-Wentzell theory. Alternatively, an application of this result in a compact setting has been carried out also in \cite{Kr16b}. The only extra condition compared to Theorem 6 in \cite{Kr16b} due to the non-compactness is the verification of Condition 8.9.(4) in \cite{FK06}:

\smallskip

\textit{For each compact $K \subseteq E$, $T > 0$ and $0 \leq M < \infty$, there exists a compact set $\hat{K} = \hat{K}(K,T,M) \subseteq E$ such that if $\gamma \in \cA\cC$ with $\gamma(0) \in K$ and
\begin{equation} \label{eqn:bound_on_lagrangian_cost}
\int_{0}^T \cL(\gamma(s),\dot{\gamma}(s)) \dd s \leq M,
\end{equation}
then $\gamma(t) \in \hat{K}$ for all $t \leq T$.}

\smallskip

This condition can be verified as follows. Recall that the level sets of $\Upsilon$ are compact. Thus, we control the growth of $\Upsilon$. Let $\gamma \in \cA\cC$ satisfy the conditions given above. Then
\begin{align*}
\Upsilon(\gamma(t)) & = \Upsilon(\gamma(0)) + \int_0^t \ip{\nabla\Upsilon(\gamma(s))}{\dot{\gamma}(s)} \dd s \\
& \leq \Upsilon(\gamma(0)) + \int_0^t \cL(\gamma(s),\dot{\gamma}(s)) + H(\gamma(s),\nabla \Upsilon(\gamma(s))) \dd s \\
& \leq \sup_{y \in K} \Upsilon(y) + M + \int_0^T \sup_z  H(z,\nabla \Upsilon(z)) \dd s \\
& =: C < \infty.
\end{align*}
Thus, we can take $\hat{K} = \{z \in E \, | \, \Upsilon(z) \leq C\}$.
\end{proof}

\smallskip

\textbf{Acknowledgement}
The authors are supported by The Netherlands Organisation for Scientific Research (NWO): RK via grant 600.065.130.12N109 and FC via TOP-1 grant 613.001.552.

\printbibliography

\end{document}